\numberwithin{equation}{section}
\newcommand{\mF}{ {\mathcal{F}} }
\newcommand{\ds}{\displaystyle}
\newcommand{\be}{\begin{equation}}
\newcommand{\ee}{\end{equation}}
\newtheorem{sch}{ {\bf Scheme} }[section]
\newtheorem{lem}{Lemma}[subsection]
\newtheorem{thm}{Theorem}[section]
\newtheorem{assum}{Assumption}[section]
\newtheorem{pro}{Proposition}[section]
\newtheorem{rem}{Remark}[section]
\begin{document}

\Year{2013} %
\Month{January}
\Vol{56} %
\No{1} %
\BeginPage{1} %
\EndPage{XX} %

\title{Convergence Error Estimates of the Crank-Nicolson Scheme for Solving Decoupled FBSDEs
}{}


\author[1]{Yang Li}{}
\author[2]{Jie Yang}{}
\author[2]{Weidong Zhao}{Corresponding author}

\address[{\rm1}]{College of Science, University of Shanghai for Science and Technology, Shanghai {\rm 200093},  China;}
\address[{\rm2}]{School of Mathematics, Shandong University, Jinan,
Shandong {\rm 250100}, China}\vspace{3mm}
\Emails{yangli@usst.edu.cn,
yangjie218@mail.sdu.edu.cn, wdzhao@sdu.edu.cn}\maketitle


 {\begin{center}
\parbox{14.5cm}{\begin{abstract}
 The Crank-Nicolson (short for C-N) scheme
for solving {\it backward stochastic differential equation} (BSDE),
driven by Brownian motions,
was first developed by the authors W. Zhao, L. Chen and S. Peng
[SIAM J. Sci. Comput., 28 (2006), 1563--1581],
and numerical experiments showed
that the accuracy of this C-N scheme was of second order for solving BSDE.
This C-N scheme was extended to solve decoupled
{\it forward-backward stochastic differential equations} (FBSDEs)
by  W. Zhao, Y. Li and Y. Fu [Sci. China. Math., 57 (2014), 665--686],
and it was numerically shown that the accuracy of the extended C-N scheme
was also of second order.

To our best knowledge,
among all one-step (two-time level) numerical schemes with second-order accuracy
for solving BSDE or FBSDEs, such as the ones in the above two papers
and the one developed by the authors
D. Crisan and K. Manolarakis [Ann. Appl. Probab., 24, 2 (2014), 652--678],
the C-N scheme is the simplest one in applications.
The theoretical proofs of second-order error estimates
reported in the literature for these schemes for solving decoupled FBSDEs
did not include the C-N scheme.

The purpose of this work is to theoretically analyze the error estimate of
the C-N scheme for solving decoupled FBSDEs.
Based on the Taylor and It\^o-Taylor expansions, the Malliavin calculus theory
(e.g., the multiple Malliavin integration-by-parts formula),
and our new truncation error cancelation techniques,
we rigorously prove  that the strong convergence rate of the C-N scheme
is of second order for solving decoupled FBSDEs,
which fills the gap between the second-order numerical and theoretical analysis
of the C-N scheme.\vspace{-3mm}
\end{abstract}}\end{center}}

 \keywords{Convergence analysis, Crank-Nicolson scheme,
decoupled forward backward stochastic differential equations,
Malliavin calculus, trapezoidal rule.}

 \MSC{60H35, 65C20}


\baselineskip 11pt\parindent=10.8pt  \wuhao
\section{Introduction}
Let $(\Omega,\mathcal{F}, \mathbb F, P)$ be a filtered complete probability space,
where $\mathbb F = (\mathcal F_t)_{0\le t\le T}$ is the natural filtration of
the standard $d$-dimensional Brownian motion $W_t = (W_t^1,\ldots,W_t^d)^\top$,
$t\in [0,T]$,
on the probability space $(\Omega, \mathcal{F}, \mathbb F, P)$,
and $T$ is a fixed finite horizon.
Let $L^2=L^2_{\mathcal{F}}(0,T)$ be the set of all
$\mathcal{F}_t$-adapted and mean-square-integrable vector or matrix
processes for $t\in[0,T]$.

In this paper, on the space $(\Omega,\mathcal{F}, \mathbb F, P)$,
we consider numerical solutions of decoupled
{\it forward-backward stochastic differential equations} (FBSDEs)
in the following integral form.
\begin{equation}\label{DFBSDEs1}
\left\{
\begin{aligned}
X_t=\;& X_0+\int_0^t b(s,X_s) ds+\int_0^t\sigma(s,X_s)dW_s, & \text{(SDE)} \\
Y_t=\;& \varphi(X_T)+\int_t^Tf(s,X_s,Y_s,Z_s)ds-\int_t^TZ_sdW_s,  & \quad\text{(BSDE)}
\end{aligned}
\right.
\end{equation}
for $t\in [0,T]$, where $X_0$ is the initial condition of the {\it forward stochastic
differential equation} (SDE), $\varphi(X_{T})$ is the terminal condition of the
{\it backward stochastic differential equation} (BSDE),
$b$ is the drift coefficient valued in $\mathbb{R}^d$,
$\sigma$ is the diffusion matrix valued in $\mathbb{R}^{d\times d}$,
and $f$ valued in $\mathbb{R}$ is the generator function.
Note that the two integrals with respect to $W_s$ in \eqref{DFBSDEs1} are the It\^o-type integrals.

A triple $(X_s, Y_s, Z_s): [0,T]\times \Omega \rightarrow
\mathbb{R}^d\times \mathbb{R}\times \mathbb{R}^{1\times d}$ is called an
$L^2$-adapted solution of  \eqref{DFBSDEs1} if it is
$\mathcal{F}_s$-adapted, $L^2$-integrable, and satisfies
(\ref{DFBSDEs1}).
In \cite{PP90}, under some standard conditions on the coefficients of \eqref{DFBSDEs1},
Pardoux and Peng originally proved the existence and uniqueness of the solution
of nonlinear BSDE with more general terminal condition $Y_{T}=\xi\in \mathcal F_{T}$.
And the solution $\big(Y_s, Z_s\big)$ of \eqref{DFBSDEs1} can be represented as (\cite{Evan98,KPQ97,LSU68,PR14,Peng91})
\begin{equation}\label{s2:e2}
Y_s = u(s,X_s), \quad  Z_s=u_x(s,X_s)\sigma(s,X_s),\quad \forall\; s\in [0,T),
\end{equation}
where
$u(t,x)$ is the smooth solution of the following parabolic
 partial differential equation~(PDE).
\begin{equation}\label{PDEs}
\begin{array}{rl}
u_t(t,x)
+\frac{1}{2}\sum\limits_{i,j=1}^d[\sigma\sigma^*]_{i,j}(t,x) u_{x_ix_j}(t, x)
+\sum\limits_{i=1}^db_i(t,x) u_{x_i}(t, x)
+f(t,x,u(t,x),u_x(t,x)\sigma(t,x))& \!\!\!\!=0
\end{array}
\end{equation}
with the terminal condition $u(T,x)=\varphi(x)$.

FBSDEs have important applications in many fields including mathematical
finance, partial differential equations, stochastic control, risk measure,
and so on \cite{An93,KPQ97,MY99,Peng90,Peng97}.
So it is interesting and important to find solutions of FBSDEs.
Usually, it is difficult to get the analytical solutions in an explicit closed form.
Thus numerical methods for solving FBSDEs are desired,
especially accurate, effective and efficient ones.
Many numerical schemes for solving BSDE and decoupled FBSDEs have been developed,
among which some are Euler-type methods with convergence rate $\frac{1}{2}$, such as
\cite{BD07,BZ08,BT04,CZ05,DM06,DMP96,GL07,GLW05,MPST02,Zhang04} and
some are high-order numerical methods, such as
\cite{CM14,MSZ08,ZCP06,ZFZ14,ZLZ12,ZLJ13,ZLF14,ZWP09,ZZJ10,ZZJ14}.

To our best knowledge in the literature, up to now,
one-step second-order numerical schemes
for solving BSDE and decoupled FBSDEs were proposed and studied in
\cite{ZCP06,ZLJ13,ZLF14,ZZJ14,CM14}.
In 2006, Zhao, Chen and Peng proposed numerical schemes for solving BSDE in \cite{ZCP06},
in which the Crank-Nicolson (short for C-N) is included. Numerical
experiments showed that the accuracy of the C-N scheme was of second order
for solving BSDE and its second-order convergence
was theoretically proved in \cite{ZLJ13}. 
And in 2014, Zhao, Li and Fu proposed three one-step second-order schemes,
including the C-N scheme, for solving decoupled FBSDEs \cite{ZLF14},
and theoretically proved second-order convergence of them
but not of the C-N one.
By introducing new Gaussian processes,
second-order numerical schemes
were presented and analyzed for solving BSDE \cite{CM14}
and for decoupled FBSDEs in \cite{ZZJ14}.
The introduced new Gaussian processes simplified
the proof of error estimates of the schemes, but doubled the computational complexity
for solving BSDE or FBSDEs.

Among all these one-step second-order schemes,
concerning their applications and coding in solving BSDE or FBSDEs,
the simplest one is the C-N scheme.
It was proposed in \cite{ZCP06} for solving BSDE
and the extension for solving decoupled FBSDEs was introduced in \cite{ZLF14}.
The second-order convergence rate of the C-N scheme for BSDE
was proved in \cite{ZLJ13},
but for decoupled FBSDEs is still open until now.

The purpose of this paper is to give a rigorously theoretical analysis on second-order convergence
of the C-N scheme for solving decoupled FBSDEs \eqref{DFBSDEs1}.
Compared with the proof in \cite{ZLJ13} for BSDE,
the analysis for decoupled FBSDEs is much more difficult and complex.
By the Taylor and It\^o-Taylor expansions, the theory of multiple Malliavin calculus,
and the error cancelation techniques, we  are able to rigorously prove
a general error estimate result for the C-N scheme, 
and based on this result, we finally obtained
the theoretical second-order error estimate of the scheme
for solving the decoupled FBSDEs.

Some notation to be used:

\begin{itemize}
\item
$A^\top$: the transpose of vector or matrix $A$.
\item
$|\cdot|$: the norm for vector or matrix defined by $|A|^2=$trace($A^\top A$).
\item
$C_b^{l, k, k,k}$: the set of continuously
 differentiable functions $\psi:[0,T]\times \mathbb{R}^d\times \mathbb{R}\times \mathbb{R}^{
d}\rightarrow \mathbb{R}$ with uniformly bounded partial derivatives
$\partial^{l_1}_t\psi$ and $
\partial^{k_1}_x\partial^{k_2}_y\partial^{k_3}_z\psi$ for $\frac{1}{2}\leq l_1\leq l$ and
$1\leq k_1+k_2+k_3\leq k$. Analogously we define $C^{l,k,k}_b$ and $C^{l,k}_b$.
\item
$C^{k}_b$: the set of functions $\psi: x\in
\mathbb{R}^d\rightarrow \mathbb{R}$ with uniformly bounded partial derivatives
$\partial^{k_1}_x\psi$ for $1\leq k_1\leq k$.
\item
$\mF_{s}^{t,x}(t\leq s\leq T)$: the
$\sigma$-field generated by the diffusion process
$\{X_{r},t\leq r\leq s, X_t = x\}$.
\item
$\mathbb{E}_{s}^{t,x}[\eta]$: the conditional
mathematical expectation of the random variable $\eta$ under the
$\sigma$-field $\mF_{s}^{t,x}$, i.e.,
$\mathbb{E}_{s}^{t,x}[\eta]=\mathbb{E}[\eta|\mF_{s}^{t,x}]$. Let
$\mathbb{E}_t^x[\eta]=\mathbb{E}[\eta|\mF_{t}^{t,x}]$.
\item
$\partial_x\psi$: the matrix valued function
$\partial_x\psi=(\partial_{x^j}\psi^i)_{d\times d}$ $(1\leq i\leq d,
1\leq j\leq d)$ for vector function
$\psi=(\psi^1,\dots,\psi^d)^{\top}$.

\end{itemize}

The rest of the paper is organized as follows. After we
introduce some preliminaries in Section 2,
we review the C-N scheme proposed in
\cite{ZLF14} for solving FBSDEs \eqref{DFBSDEs1} in Section 3.
Then we state our main error estimate results
for the C-N scheme in Section 4, and prove
them in Section 5. In Section 6, some conclusions are given.
\section{ Preliminaries}
\subsection{Variational equations of the decoupled FBSDEs }
Let $(X_r^{t,x},Y_r^{t,x},Z_r^{t,x})$ be the solution of the FBSDEs
\begin{equation}\label{DFBSDEs2}
\left\{
\begin{aligned}
X_r^{t,x}=  \;&
x+\int_t^r b(s,X_s^{t,x}) ds+\int_t^r\sigma(s,X_s^{t,x})dW_s, & \text{(SDE)} \\
Y_r^{t,x}=  \;&
\varphi(X_T^{t,x})+\int_r^Tf(s,X_s^{t,x},Y_s^{t,x},Z_s^{t,x})ds-\int_r^T Z_s^{t,x} dW_s,  & ~~\text{(BSDE)}
\end{aligned}
\right.
\end{equation}
for $r\in [t,T]$. Here the superscript $^{t,x}$ means that the forward SDE starts
from time $t$ at space point $x$.

Let $\nabla_{x_i}  X_r^{t,x}$  and $\nabla_{x_i} Y_r^{t,x}$  be respectively the
variation of $X_r^{t,x}$ and $Y_r^{t,x}$ with respect to (w.r.t.) $x_i$ which is the
$i$-th component of $x=(x_1,\ldots,x_d)^\top$.
Taking variation $\nabla_{x_i}$ on both sides of the equations in \eqref{DFBSDEs2},
we deduce
\begin{equation}\label{s2:e1}
\left\{ \begin{aligned}
\nabla_{x_i} X_r^{t,x}=\;&
e_i+\int_t^rb_x(s,X_s^{t,x})\nabla_{x_i} X_s^{t,x}ds+\int_t^r\sigma_x^j(s,X_s^{t,x})\nabla_{x_i} X_s^{t,x}\, dW_s, &  \\
\nabla_{x_i} Y_r^{t,x}=\;&
\varphi_x(X_T^{t,x})\nabla_{x_i} X_T^{t,x}+\int_r^T\nabla_{x_i} f(s,X_s^{t,x},Y_s^{t,x},Z_s^{t,x})ds-\int_r^T\nabla_{x_i} Z_s^{t,x}dW_s,  &
\end{aligned}
\right.
\end{equation}
where $e_i=\overbrace{(0,\ldots,0,1,0,\ldots,0)}^i$ is the $i$-th coordinate basis
vector of $\mathbb{R}^d$, $\sigma^j$ is the $j$-th column of $\sigma(\cdot)$, and
\begin{equation*}\begin{array}{rl}
\nabla_x X_s^{t,x}=&\!\!\!\! [\nabla_{x_1} X_s^{t,x},\ldots,\nabla_{x_d} X_s^{t,x}]_{d\times d}, \quad
\nabla_x Y_s^{t,x}= [\nabla_{x_1} Y_s^{t,x},\ldots, \nabla_{x_d} Y_s^{t,x}]_{1\times d},  \\
\nabla_x Z_s^{t,x}=&\!\!\!\! \big[[\nabla_{x_1} Z_s^{t,x}]^\top,\ldots,[\nabla_{x_d} Z_s^{t,x}]^\top\big]_{d\times d},\\
\nabla_{x_i} f(s,X_s^{t,x},Y_s^{t,x},Z_s^{t,x})=&\!\!\!\!
f_x(s,X_s^{t,x},Y_s^{t,x},Z_s^{t,x})\nabla_{x_i} X_s^{t,x}
 +f_y(s,X_s^{t,x},Y_s^{t,x},Z_s^{t,x})\nabla_{x_i} Y_s^{t,x}\\
&\!\! +f_z(s,X_s^{t,x},Y_s^{t,x},Z_s^{t,x})\nabla_{x_i} Z_s^{t,x}.
\end{array}\end{equation*}

\subsection{The It\^o-Taylor scheme for  forward SDE}\label{sec2}

For the time interval $[0,T]$, we first introduce the following  time partition:
\begin{equation*}\label{tpart}
0=t_0<\dots<t_{N-1}<t_N=T
\end{equation*}
with $\Delta = t_{n+1}-t_n$ for $n = 0,1,\dots,N-2$, and $t_N-t_{N-1} = \Delta^2$.

We shall call a row vector $\alpha=(j_1,j_2,\ldots,j_l)$ with $j_i\in \{0,1,\ldots,d\}$ for $i\in \{1,2,\ldots,l\}$,
a multi-index of length $l:=l(\alpha)\in \{1,2,\ldots,d\}$,
and  denote by $v$ the multi-index of length zero $(l(v):=0)$.
Let $\mathcal{M}$ be the set of all multi-indices, that is,
\begin{equation*}\label{m}
\mathcal{M}=\Big\{(j_1,j_2,\ldots,j_l): j_i\in \{0,1,\ldots,d\}, i\in\{1,2, \ldots,l\} ~~
\textrm{for} ~~ l=1,2,\ldots\Big\}\cup \{v\}.
\end{equation*}
Given a multi-index $\alpha\in \mathcal{M}$ with $l(\alpha)\geq 1$,
we write $-\alpha$ and $\alpha-$ for
the multi-index in $\mathcal{M}$ by deleting the first and last component of $\alpha$, respectively.
Denote by $I_\alpha[g_\alpha(\cdot)]_{t_n,t_{n+1}}$ the multiple It\^o integral
recursively defined by
\begin{equation*}
I_\alpha[g_\alpha(\cdot)]_{t_n,t_{n+1}}=\begin{cases}
 X^n, &l=0,\\
\int_{t_n}^{t_{n+1}}I_{\alpha-}[g_\alpha(\cdot)]_{t_n,s}ds, &l\geq 1,\; j_l=0,\\
\int_{t_n}^{t_{n+1}}I_{\alpha-}[g_\alpha(\cdot)]_{t_n,s}dW_s^{j_l}, &l \geq 1,\; j_l\geq 1,
\end{cases}
\end{equation*}
where
the It\^o coefficient functions $g_\alpha(t,x)$ are defined by
\begin{equation*}
g_\alpha(t,x)=\begin{cases}
 x,  &l=0,\\
g_{(0)}=b(t,x),\;\, g_{(1)}=\sigma(t,x), &l=1,\\
L^{j_1}g_{-\alpha}, &l > 1,
\end{cases}
\end{equation*}
 for all $(t,x)\in \mathbb{R}\times \mathbb{R}^d$,
   and $L^{j}$ are the differential operators defined by
\begin{equation}\label{defL}
L^0=\partial_t+ \sum\limits_{k=1}^d b_k\frac{\partial}{\partial x_k}+\frac{1}{2}\sum\limits_{k,l=1}^d\sum\limits_{j=1}^d\sigma_{kj}\sigma_{lj}\frac{\partial^2}{\partial x_k\partial x_l}; \quad L^j=\sum\limits_{i=1}^d\sigma_{ij}\frac{\partial}{\partial x_i},\quad\; 1\leq j\leq d.
\end{equation}

In this paper, we will use the following weak order-2 It\^o-Taylor
schemes for solving SDE:
\begin{equation}\label{Xnn10}
X^{n+1}
=\sum_{\alpha\in \Gamma_2}g_\alpha(t_n,X^n)I_{\alpha,n}
=X^n+\phi^n,
\end{equation}
where $X^{n}=\big(X_1^{n}, \ldots, X_d^{n}\big)^\top$, $\Gamma_2=\{\alpha\in \mathcal{M}: l(\alpha)\leq 2\}$, $I_{\alpha,n}:=I_{\alpha}[1]_{t_n,t_{n+1}}$
are the multiple It\^o integrals for the index $\alpha$ over the time interval $[t_n,t_{n+1}]$,
and $\phi^n=\big(\phi_1^n,\ldots,\phi_d^n\big)^\top$ with its $i$-th component
\begin{equation*}
\phi_i^n =\; b_i\Delta +\sum_{j_1=1}^d\sigma_{ij_1}I_{(j_1),n}+\sum_{j_1,j_2=1}^d L^{j_1}\sigma_{ij_2} I_{(j_1,j_2), n}
+\frac{1}{2}\sum_{j_1=1}^d\big(L^{j_1} b_i+L^0\sigma_{ij_1}\big)\Delta\, I_{(j_1), n}+\frac{1}{2}L^0b_i \Delta^2
\end{equation*}
with
$I_{(j_1,j_2), n}=\int_{t_n}^{t_{n+1}}\int_{t_n}^{s_2}dW_{s_1}^{j_1}dW_{s_2}^{j_2}$  for $ j_1, j_2 \in \{1,\ldots,d\}$.
In the sequel, if there is no confusion, for a function $a=a(t,x)$,
we denote $a(t_n,X^n)$ by $a$.

\subsection{The Malliavin calculus on SDE and BSDE}
Suppose that $H$ is a real separable Hilbert space with scalar product denoted
by $\langle\cdot, \cdot\rangle_H$. The norm of an element $h \in H$ will be denoted by $\|h\|_H$.
Let $\mathcal W = \{W(h),h \in H \}$ denote an isonormal Gaussian process associated
with the Hilbert space $H$ on $(\Omega,\mathcal{F}, \mathbb F, P)$.

For the Brownian motion $W_t=(W_t^1,\ldots,W_t^d)^\top$, we define a random
variable of the form $F=f\big(W(h^1),\ldots,W(h^d)\big)$, where $h^k=\big(h^{k,1},\ldots,h^{k,d}\big)$
and
\begin{equation}\label{11}
W(h^{k})=\int_0^\infty h_t^{k}d W_t,
\qquad 1\leq k\leq d.
\end{equation}
 Similarly we define  $W^i(h^{k,i})=\int_0^\infty h_t^{k,i}d W_t^i$, then it is easy to see $W(h^{k})=\sum\limits_{i=1}^d W^i(h^{k,i}).$
 For the index $\alpha=(i)$ $(0\leq i\leq d)$, let $D_{t}^{\alpha}=D_{i,t}$  ($0\leq t \leq T$) be the Malliavin derivative of order one w.r.t. $W_s^i,$ with the convention that $D_{0,t}$ is just the identity,
i.e.,  $D_{0,t}=1$ and
\begin{equation*}
D_{i,t} F=\sum\limits_{k=1}^d\frac{\partial f}{\partial x^{k,i}}(W(h^1),\ldots, W(h^d))h_t^{k,i}, \quad \textrm{for} \quad i=1,\ldots,d.
\end{equation*}
From an intuitive point of view $D_{i,t} F$ represents the derivative of $F$
w.r.t. the increment of $i$-th Brownian motion $W^i$ corresponding to $t$.
 We will sometimes use the following intuitive notation
\begin{equation*}
D_{0,t}=1, \quad  D_{i,t} F=\frac{\partial F}{\partial \Delta_t^i},
\end{equation*}
where $\Delta_t^i=\Delta W_n^i=W_{t_{n+1}}^i-W_{t_n}^i$ for $1\leq i\leq d$ and $t_n\leq t\leq t_{n+1}$.
For a random variable $X=(X^1,\ldots,X^d)^\top$, we assume
\begin{equation}
D_{i,t}X=(D_{i,t} X^1,\ldots,D_{i,t} X^d)^\top, \quad
D_{t}X=\big(D_{1,t} X,\ldots,D_{d,t} X\big)_{d\times d}
\end{equation}
and
 \begin{equation}
D_{s_1\cdots s_l}^{\alpha}=D_{s_1\cdots s_l}^{(j_1,\ldots,j_l)}=D_{j_1, s_1}, \cdots D_{j_l, s_l}
 \end{equation}
 for the multi-indices $\alpha=(j_1,\ldots,j_l)\in \mathcal{A}_l$ with $j_i\in \{0,1,\cdots, d\}$ ($i=1,\ldots,l$),
where $\mathcal{A}_l=\{\alpha\in \mathcal{M}: l(\alpha)= l\}$,
and $s_i\in [0,T]$.
For any integer $p \ge 1$, $\mathbb{D}^{k,p}$ is the domain of $\mathbb{D}^k$
($k\in \mathbb{N}$) in $L^p(\Omega)$, that is,
$\mathbb{D}^{k,p}$ is the closure of the class of smooth random variables $F$ w.r.t.
the norm
$$\|F\|_{k,p}^p = \mathbb{E}[|F|^p]+\sum\limits_{j=1}^k \sum\limits_{|\alpha|=l}\int_0^T\cdots\int_0^T\mathbb{E}[|D^\alpha_{s_1,\ldots,s_l} F|^{p}]d s_1\ldots d s_l.$$
 For $p = 2$, the space $\mathbb{D}^{1,2}$ is a Hilbert space with the scalar product
$$\langle F,G\rangle= \mathbb{E}[FG]+\mathbb{E}[\langle DF, DG \rangle_H],$$
where $\ds \langle DF,DG\rangle_H: =\int_0^T \sum\limits_{i=1}^d D_{i,t} F \, D_{i,t} G dt$.

For $t_n<r\leq s\leq t\leq t_{n+1}$ and $i,j,k \in \{1,2,\ldots,d\}$,
by taking the Malliavin derivative $D_{j,t}$, $D_{i, s}D_{j, t}$ and $D_{k,r}D_{i,s}D_{j, t}$
to the multiple integral $I_{(j_1,j_2), n}$, we easily get
\begin{equation*}
\begin{aligned}
D_{j,t}I_{(j_1,j_2), n}
=\,&\Big(\int_{t_n}^tdW_{s_1}^{j_1}\Big)\delta_{jj_2}+\int_t^{t_{n+1}}D_{j,t}\big(W_{s_2}^{j_1}-W_t^{j_1}+W_t^{j_1}-W_{t_n}^{j_1}\big)dW_{s_2}^j\\
=\,&(W_t^{j_1}-W_{t_n}^{j_1})\delta_{jj_2}+(W_{t_{n+1}}^{j_2}-W_t^{j_2})\delta_{jj_1},\\
D_{i, s}D_{j, t}I_{(j_1,j_2), n}
=\,&D_{i,s}(W_t^{j_1}-W_{t_n}^{j_1})\delta_{jj_2}+D_{i,s}\big(W_{t_{n+1}}^{j_2}-W_t^{j_2}\big)\delta_{jj_1} =\delta_{ij_1}\delta_{jj_2}, \, \\
D_{k,r}D_{i,s}D_{j, t}I_{(j_1,j_2), n} =&\,0.
\end{aligned}
\end{equation*}
Then for $s_1<s_2<s_3$ and $\alpha=(j_1,j_2.j_3)$, it holds that
\begin{equation}\label{phin}
\begin{array}{rl}
D_{s_1s_2s_3}^\alpha\phi_i^n= &\!\!\!\!
D_{j_3, s_3} \phi_i^n =\sigma_{ij_3}
+\sum\limits_{j=1}^d L^{j}\sigma_{ij_3}\big(W_{s_3}^{j}-W_{t_n}^j\big)
+\sum\limits_{j=1}^d L^{j_3}\sigma_{ij}\big(W_{t_{n+1}}^{j}-W_{s_3}^{j}\big)\\
& \hspace{1.5cm}
+\frac{1}{2}\big(L^{j_3} b_i+L^0\sigma_{ij_3}\big)\Delta, \quad j_1=j_2=0,\;\; j_3\geq 1,\\
D_{s_1s_2s_3}^\alpha\phi_i^n=
&\!\!\!\!
D_{s_2, j_2} D_{s_3, j_3}\phi_i^n =\sum\limits_{j=1}^d L^{j}\sigma_{ij_3}\delta_{jj_2}=L^{j_2}\sigma_{ij_3}, \quad j_1=0, \;\; j_2, j_3\geq 1,\\
D_{s_1s_2s_3}^\alpha \phi_i^n=
&\!\!\!\!
D_{j_1, s_1 } D_{j_2, s_2 }D_{j_3 , s_3 } \phi_i^n=0, \quad j_1, j_2, j_3\geq 1.
\end{array}
\end{equation}
For the Malliavin derivative operator $D_t$, we introduce the following two lemmas.
\begin{lem} [Integration-by-parts formula] \label{nualart}
For $F\in \mathbb{D}^{1,2}$, $u\in L^2(\Omega;H)$ and $i,j\in \{1,2,\ldots,d\}$,
we have
\begin{equation}\label{Malp}\begin{aligned}
&\mathbb{E}[F\int^T_0u_t dW_t^i]=\mathbb{E}[\int_0^TD_{i,t} F\, u_t dt], \quad
  D_{i, t}\int_0^Tu_s ds=\int_t^TD_{i, t}u_s ds, \\
&D_{i,t}\int_0^Tu_s d W_s^j=u_t\delta_{ij}+\int_t^TD_{i,t}u_s d W_s^j, \quad 0<t<T,
\end{aligned}\end{equation}
where $\delta_{ij}$ is the Kronecker delta function \cite{Nual95}.
\end{lem}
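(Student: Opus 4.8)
The plan is to prove each of the three identities first for a dense class of elementary functionals and processes — where every object is a finite sum and $D$ acts by the ordinary chain rule — and then to extend to the stated generality by the closability of $D$ and the $L^2$-continuity of the It\^o integral.

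For the duality formula I would start from $F=f(W(h^1),\dots,W(h^m))$ with $f\in C^\infty$ having polynomially bounded derivatives, and from a simple $\mathcal F_t$-adapted process $u_t=\sum_{k}\xi_k\,\mathbf{1}_{(r_k,r_{k+1}]}(t)$ with smooth $\xi_k\in\mathcal F_{r_k}$, so that $\int_0^T u_t\,dW_t^i=\sum_k\xi_k(W^i_{r_{k+1}}-W^i_{r_k})$. Expanding both sides with the product rule for $D_{i,t}$ and the elementary Gaussian integration-by-parts identity $\mathbb E[G\,W(h)]=\mathbb E[\langle DG,h\rangle_H]$ (valid for $G\in\mathbb D^{1,2}$), the cross terms involving $D_{i,t}\xi_k$ on the block $(r_k,r_{k+1}]$ drop out because $\xi_k$ is $\mathcal F_{r_k}$-measurable, so that $D_{i,t}\xi_k=0$ for $t>r_k$, leaving exactly $\mathbb E[\int_0^T D_{i,t}F\,u_t\,dt]$. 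Approximating $F$ in $\mathbb D^{1,2}$ and $u$ in $L^2(\Omega;H)$, and using that $D$ is closable and that $u\mapsto\int_0^T u_t\,dW_t^i$ is an $L^2$-isometry, then yields the identity in general.

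The second identity follows the same pattern: for a simple $u$ one has $\int_0^T u_s\,ds=\sum_k\xi_k(r_{k+1}-r_k)$, and applying $D_{i,t}$ annihilates every block with $r_k<t$ by adaptedness, so the surviving blocks reassemble into $\int_t^T D_{i,t}u_s\,ds$; one then passes to the limit via closability of $D$ and boundedness of $v\mapsto\int_0^T v_s\,ds$ on $L^2$. For the third identity the computation is identical except that $D_{i,t}$ applied to $\sum_k\xi_k(W^j_{r_{k+1}}-W^j_{r_k})$ splits into two groups: the terms $\sum_k(D_{i,t}\xi_k)(W^j_{r_{k+1}}-W^j_{r_k})=\int_t^T D_{i,t}u_s\,dW_s^j$ coming from differentiating the integrands, and the terms $\sum_k\xi_k\,D_{i,t}(W^j_{r_{k+1}}-W^j_{r_k})=\delta_{ij}\sum_k\xi_k\,\mathbf{1}_{(r_k,r_{k+1}]}(t)=u_t\,\delta_{ij}$ coming from differentiating the increments, which is precisely the Kronecker term in \eqref{Malp}.

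The hard part will be the limiting argument in the third (and first) identity: one must approximate $u$ by simple processes $u^{(n)}$ in such a way that $\int_0^\cdot u^{(n)}_s\,dW_s^j$, $\int_t^T D_{i,t}u^{(n)}_s\,dW_s^j$, and the diagonal term $u^{(n)}_t\,\delta_{ij}$ converge simultaneously in $L^2([0,T]\times\Omega)$. This forces one to work in the domain $\mathbb L^{1,2}$ of processes with $u_s\in\mathbb D^{1,2}$ and $\int_0^T\|Du_s\|_H^2\,ds<\infty$, rather than merely in $L^2(\Omega;H)$, and to invoke the standard facts (as in \cite{Nual95}) that $D$ commutes with the Skorohod/It\^o integral on this domain and that the operators involved are closed there. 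Granting these functional-analytic facts, the elementary computations above close the proof.
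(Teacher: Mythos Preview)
Your sketch is essentially the standard textbook argument, and it is correct in outline; the one caveat you already flag yourself---that the closure step for the second and third identities really requires $u\in\mathbb{L}^{1,2}$ rather than merely $u\in L^2(\Omega;H)$ as the lemma literally states---is a genuine point about how the hypotheses in the paper are slightly understated.

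However, there is nothing to compare against: the paper does not prove this lemma at all. It is stated as a quoted fact with a citation to Nualart's monograph \cite{Nual95}, and the authors move directly on to the chain rule (Lemma~\ref{pro1}) and then use \eqref{Malp} as a black box throughout Section~5. So your proposal is not an alternative to the paper's proof but rather a reconstruction of the proof the paper is importing from the reference. If anything, your write-up is more careful than the lemma statement itself, since you correctly isolate the need for $u_s\in\mathbb{D}^{1,2}$ with square-integrable Malliavin derivative in order for $D_{i,t}\int_0^T u_s\,dW_s^j$ and $\int_t^T D_{i,t}u_s\,dW_s^j$ to even make sense.
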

\begin{lem}[Chain-rule]\label{pro1}
If $\psi: \mathbb{R}^m\rightarrow \mathbb{R}$ is a continuously differential function
with bounded partial derivatives, $F=(F_1,\ldots, F_m)$ $(F_i\in \mathbb{D}^{1,2})$ is a random vector,
and the solution $\big(X_r^{t,x},Y_r^{t,x},Z_r^{t,x}\big)$ of \eqref{DFBSDEs2} is in $\mathbb{D}^{1,2}$, then
the following identities hold \cite{KPQ97,Nual95}
\begin{equation}\label{DY}\begin{aligned}&
D_{i,t}\psi(F)
=\sum\limits_{j=1}^d\frac{\partial \psi}{\partial x_j}(F)D_{i,t}F_j, \\&
D_sX_r^{t,x}I_{s\leq r}
=\nabla_xX_r^{t,x}(\nabla_x X_s^{t,x})^{-1}\sigma(s,X_s^{t,x})I_{s\leq r},\quad
D_sY_r^{t,x}I_{s\leq r}
= \nabla_x Y_r^{t,x}D_sX_r^{t,x}I_{s\leq r}.
\end{aligned}\end{equation}
\end{lem}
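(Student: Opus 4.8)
The three identities in \eqref{DY} are classical; the plan is to derive them in order, the first by approximation, the other two by applying the Malliavin derivative to the equations in \eqref{DFBSDEs2} and matching the result with the variation system \eqref{s2:e1}. For the chain rule, I would first note that when $F=(F_1,\ldots,F_m)$ is a \emph{smooth} (cylindrical) random vector and $\psi$ is smooth, $\psi(F)$ is again cylindrical and $D_{i,t}\psi(F)=\sum_{j=1}^m\partial_{x_j}\psi(F)\,D_{i,t}F_j$ is just the ordinary chain rule applied to the defining formula for $D_{i,t}$. To remove the smoothness of $\psi$, mollify it to $\psi_\varepsilon\in C_b^\infty$ with $\psi_\varepsilon\to\psi$, $\partial_x\psi_\varepsilon\to\partial_x\psi$ locally uniformly and $\sup_\varepsilon\|\partial_x\psi_\varepsilon\|_\infty\le\|\partial_x\psi\|_\infty$; approximate each $F_j$ by smooth random variables in $\mathbb{D}^{1,2}$; then pass to the limit using the closability of $D$ and dominated convergence for the derivative term, the uniform bound on $\partial_x\psi_\varepsilon$ supplying the domination. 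This gives $\psi(F)\in\mathbb{D}^{1,2}$ and the first identity.

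For the second identity I would fix $s$ with $t<s\le r$ and apply $D_{j,s}$ to the forward SDE in \eqref{DFBSDEs2}. Using the commutation rules of Lemma~\ref{nualart} and the chain rule just proved, one obtains
\be
D_{j,s}X_r^{t,x}=\sigma^j(s,X_s^{t,x})+\int_s^r b_x(u,X_u^{t,x})D_{j,s}X_u^{t,x}\,du+\sum_{k=1}^d\int_s^r\sigma_x^k(u,X_u^{t,x})D_{j,s}X_u^{t,x}\,dW_u^k,
\ee
the term $\sigma^j(s,X_s^{t,x})$ being exactly the boundary (``jump'') contribution produced by the stochastic-integral commutation formula; for $r<s$ the left-hand side vanishes because $X_r^{t,x}$ is $\mF_r^{t,x}$-measurable, which is the source of the factor $I_{s\le r}$. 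Comparing this linear SDE with the first line of \eqref{s2:e1}, the matrix process $\Phi_{s\to r}:=\nabla_xX_r^{t,x}(\nabla_xX_s^{t,x})^{-1}$ solves the same linear SDE with identity initial value at time $s$ (invertibility of $\nabla_xX_s^{t,x}$ being obtained in the usual way, by writing the SDE for the candidate inverse and checking with It\^o's formula that the product is constant). By uniqueness for linear SDEs, $D_{j,s}X_r^{t,x}=\Phi_{s\to r}\,\sigma^j(s,X_s^{t,x})$, and stacking over $j$ gives the claimed matrix identity.

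For the third identity I would apply $D_{j,s}$ to the BSDE in \eqref{DFBSDEs2}. For $r\ge s$ the stochastic-integral commutation formula now contributes no jump term (the $Z$-integrand lives on $[r,T]\subset[s,T]$), while the chain rule turns the terminal and drift parts into $\varphi_x(X_T^{t,x})D_{j,s}X_T^{t,x}$ and $f_xD_{j,s}X+f_yD_{j,s}Y+f_zD_{j,s}Z$; hence $(D_{j,s}Y_r^{t,x},D_{j,s}Z_r^{t,x})_{r\ge s}$ solves precisely the $(Y,Z)$-part of \eqref{s2:e1}, driven by $D_{j,s}X$ in place of $\nabla_{x_i}X$. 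One then reads off the representation either from uniqueness for this linear BSDE or, more directly, from the Markov representation $Y_r^{t,x}=u(r,X_r^{t,x})$ in \eqref{s2:e2}: the chain rule gives $D_{j,s}Y_r^{t,x}=u_x(r,X_r^{t,x})D_{j,s}X_r^{t,x}$, which combined with the corresponding expression for $\nabla_xY_r^{t,x}$ obtained by differentiating $u(r,X_r^{t,x})$ and with $Z_r^{t,x}=u_x(r,X_r^{t,x})\sigma(r,X_r^{t,x})$ yields the identity in the form stated; stacking over $j$ gives its matrix version.

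The genuinely delicate point — and the place where the standing regularity hypotheses on $b,\sigma,f,\varphi$ and on $(X,Y,Z)$ are really used — is the rigorous justification of the differentiation under the integral signs: one needs $X_r^{t,x},Y_r^{t,x},Z_r^{t,x}\in\mathbb{D}^{1,2}$ with derivative processes bounded uniformly in the relevant $L^p$-norms, together with the identification of the diagonal value $D_{j,s}X_s^{t,x}=\sigma^j(s,X_s^{t,x})$ of the Malliavin derivative of the forward flow and the invertibility of the first-variation matrix. These are the classical facts borrowed from \cite{KPQ97,Nual95}, so in the write-up I expect to recall them and then carry out only the specialization to \eqref{DFBSDEs2} as above.
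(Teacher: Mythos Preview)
Your outline is correct and follows the standard route found in the very references the paper cites. Note, however, that the paper does not prove this lemma at all: it is stated as a known result with citations to \cite{KPQ97,Nual95}, and no argument is given. So there is nothing to compare against beyond observing that your sketch is precisely the classical proof from those sources (approximation for the chain rule; differentiating the SDE and matching with the first-variation equation for $D_sX_r^{t,x}$; differentiating the BSDE or invoking the Markov representation $Y_r^{t,x}=u(r,X_r^{t,x})$ for $D_sY_r^{t,x}$).
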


\section{The C-N Scheme for solving decoupled FBSDEs}
 \qquad\\
Let $(X_t^{t_n,X^n}, Y_t^{t_n,X^n}, Z_t^{t_n,X^n})_{t_n\leq t\leq T}$ ($0\leq n\leq N-1$) be the
solution of the FBSDEs \eqref{DFBSDEs2} with $t$ and $x$ replaced by $t_n$ and $X^n$,
respectively. Then we have
\begin{equation}\label{s3:e1}
Y_{t_n}^{t_n,X^n}=\,Y_{t_{n+1}}^{t_n,X^n}+\int_{t_n}^{t_{n+1}}f(s,X_s^{t_n,X^n},Y_s^{t_n,X^n},Z_s^{t_n,X^n})
\,ds-\int_{t_n}^{t_{n+1}}Z_s^{t_n,X^n}\,dW_s.
\end{equation}
By taking the conditional mathematical expectation $\mathbb{E}_{t_n}^{X^n}[\cdot]$ to the
above equation gives
\begin{equation}
\label{s3:e2}
Y_{t_n}^{t_n,X^n}=\;\mathbb{E}_{t_n}^{X^n}[Y_{t_{n+1}}^{t_n,X^n}]+\int_{t_n}^{t_{n+1}}\mathbb{E}_{t_n}^{X^n}[f_s^{t_n,X^n}]\,ds
\end{equation}
with $f_s^{t_n,X^n}:=f(s,X_s^{t_n,X^n},Y_s^{t_n,X^n},Z_s^{t_n,X^n})$.
When $n=N-1$, we use the Euler method to approximate the integral in (\ref{s3:e2}) and obtain
\begin{equation}\label{s2.3}
\begin{aligned}
Y_{t_{N-1}}^{t_{N-1},X^{N-1}}
&=\,\mathbb{E}_{t_{N-1}}^{X^{N-1}}\big[\varphi(X^N)\big]+\Delta^2 f_{t_{N-1}}^{t_{N-1},X^{N-1}}
+\sum\limits_{j=1}^2R_{yj}^{N-1},
\end{aligned}
\end{equation}
where
\begin{equation*}
R_{y1}^{N-1}= \int_{t_{N-1}}^{t_{N}}\big\{\mathbb{E}_{t_{N-1}}^{X^{N-1}}[f_s^{t_{N-1},X^{N-1}}]
 -f_{t_{N-1}}^{t_{N-1},X^{N-1}}\big\}ds, \quad
R_{y2}^{N-1}= \mathbb{E}_{t_{N-1}}^{X^{N-1}}[\varphi(X_{t_{N}}^{t_{N-1},X^{N-1}})-\varphi(X^{N})].
\end{equation*}
For $0\leq n\leq N-2$, by using the trapezoidal rule
to approximate the integral in (\ref{s3:e2}), we deduce
\begin{equation}\label{s3:e4}
\begin{aligned}
Y_{t_n}^{t_n,X^n}
=&\;\mathbb{E}_{t_n}^{X^n}[Y_{t_{n+1}}^{t_{n+1},X^{n+1}}]+\frac{1}{2}\Delta f_{t_n}^{t_n,X^n}
+\frac{1}{2}\Delta \mathbb{E}_{t_n}^{X^n}[f_{t_{n+1}}^{t_{n+1},X^{n+1}}]
+\sum\limits_{j=1}^2R_{yj}^n,
\end{aligned}
\end{equation}
where
\begin{equation}\label{ry1}
\begin{array}{rl}
&\;R_{y1}^n=\int_{t_n}^{t_{n+1}}\{\mathbb{E}_{t_n}^{X^n}[f_s^{t_n,X^n}]
 -\frac{1}{2}\mathbb{E}_{t_n}^{X^n}[f_{t_{n+1}}^{t_n,X^n}]
 -\frac{1}{2}f_{t_n}^{t_n,X^n}\}ds,\\
&\;R_{y2}^n= \mathbb{E}_{t_n}^{X^n}[Y_{t_{n+1}}^{t_n,X^n}-Y_{t_{n+1}}^{t_{n+1},X^{n+1}}]+\frac{1}{2}\Delta \mathbb{E}_{t_n}^{X^n}[f_{t_{n+1}}^{t_n,X^n}-f_{t_{n+1}}^{t_{n+1},X^{n+1}}].
\end{array}
\end{equation}

Let $\Delta W_n=W_{t_{n+1}}-W_{t_n}$ for $0\leq n\leq N-1$.
 Multiplying (\ref{s3:e1}) by $\Delta W_{n}^{\top}$, taking the conditional mathematical
expectation $\mathbb{E}_{t_n}^{X^n}[\cdot]$ on both sides of the
derived equation,  and then using the It\^o isometry formula we obtain
\begin{equation*}\label{s3:e5}
-\mathbb{E}_{t_n}^{X^n}[Y_{t_{n+1}}^{t_n,X^n}\Delta W_{n}^{\top}]=
\int_{t_n}^{t_{n+1}}\mathbb{E}_{t_n}^{X^n}[f_s^{t_n,X^n}\Delta W_n^{\top}]\,ds
-\int_{t_n}^{t_{n+1}}\mathbb{E}_{t_n}^{X^n}[Z_s^{t_n,X^n}]\,ds.
\end{equation*}
When $n = N-1$, the Euler scheme is applied to approximate the integral in the above
equation, then
\begin{equation}\label{ZN-1}
Z_{t_{N-1}}^{t_{N-1},X^N-1} = \frac{1}{\Delta ^2}
\mathbb{E}_{t_{N-1}}^{X^{N-1}}\big[\varphi(X^N)\Delta W_{N-1}^\top\big]
+ \frac{1}{\Delta^2}\sum\limits_{j=1}^2R_{zj}^{N-1},
\end{equation}
where
\begin{equation*}\label{rz}
\begin{array}{rl}
R_{z1}^{N-1}=&\!\!\!\! \int_{t_{N-1}}^{t_{N}}\mathbb{E}_{t_{N-1}}^{X^{N-1}}
[f_s^{t_{N-1},X^{N-1}}\Delta W_{N-1}^\top]ds
-\int_{t_{N-1}}^{t_{N}}\mathbb{E}_{t_{N-1}}^{X^{N-1}}\big([Z_s^{t_{N-1},X^{N-1}}]
-Z_{t_{N-1}}^{t_{N-1},X^{N-1}}\big)ds, \\
R_{z2}^{N-1}=&\!\!\!\! \mathbb{E}_{t_{N-1}}^{X^{N-1}}[\varphi(X_{t_{N}}^{t_{N-1},X^{N-1}})\Delta W_{N-1}^\top
-\varphi(X^{N})\Delta W_{N-1}^\top].
\end{array}
\end{equation*}
For $0\leq n\leq N-2$, following similar derivation of the equation (\ref{s3:e4}), we obtain the second
reference equation as
\begin{equation}\label{s2e13}
\begin{aligned}
\frac{1}{2}\Delta Z_{t_n}^{t_n,X^n}
=-\frac{1}{2}\Delta
\mathbb{E}_{t_n}^{X^n}[Z_{t_{n+1}}^{t_{n+1},X^{n+1}}]+\mathbb{E}_{t_n}^{X^n}[Y_{t_{n+1}}^{t_{n+1},X^{n+1}}\Delta W_{n}^{\top}]
+\frac{1}{2}\Delta \mathbb{E}_{t_n}^{X^n}[f_{t_{n+1}}^{t_{n+1},X^{n+1}}\Delta W_{n}^{\top}]
+\sum\limits_{j=1}^2R_{zj}^n,
\end{aligned}\end{equation}
where
\begin{equation}\label{crz}
\begin{array}{rl}
R_{z1}^n=
&\!\!\!\!
\int_{t_n}^{t_{n+1}}\mathbb{E}_{t_n}^{X^n}[f_s^{t_n,X^n}\Delta
W_n^{\top}]ds-\frac{1}{2}\Delta
\mathbb{E}_{t_n}^{X^n}[f_{t_{n+1}}^{t_n,X^n}\Delta  W_{n}^{\top}]\\
&\!\!\!\!
 -\int_{t_n}^{t_{n+1}}\{\mathbb{E}_{t_n}^{X^n}[Z_s^{t_n,X^n}]
-\frac{1}{2}\mathbb{E}_{t_n}^{X^n}[Z_{t_{n+1}}^{t_n,X^n}]-\frac{1}{2}Z_{t_n}^{t_n,X^n}\}\,ds, \\
R_{z2}^n=
&\!\!\!\!
-\frac{1}{2}\Delta \mathbb{E}_{t_n}^{X^n}[Z_{t_{n+1}}^{t_n,X^n}-Z_{t_{n+1}}^{t_{n+1},X^{n+1}}]
+\mathbb{E}_{t_n}^{X^n}[\big(Y_{t_{n+1}}^{t_n,X^n}-Y_{t_{n+1}}^{t_{n+1},X^{n+1}}\big)\Delta W_{n}^{\top}]\\
&\!\!\!\!
+\frac{1}{2}\Delta \mathbb{E}_{t_n}^{X^n}[\big(
f_{t_{n+1}}^{t_n,X^n}-f_{t_{n+1}}^{t_{n+1},X^{n+1}}\big)\Delta W_{n}^{\top}].
\end{array}
\end{equation}

Let $(Y^n,Z^n)$ denote the approximation to the exact solution $(Y_{t_n}^{t_n,X^n},Z_{t_n}^{t_n,X^n})$ of BSDE \eqref{s3:e1} for $n=N-1,\ldots,0$.
For simple representation, we denote $f^n:=f(t_n,X^n,Y^n,Z^n)$.
Now, based on the reference equations  \eqref{s2.3}, \eqref{s3:e4}, \eqref{ZN-1} and \eqref{s2e13},
 we introduce the Crank-Nicolson scheme (Scheme 2.1 proposed in \cite{ZLF14}) for solving decoupled FBSDEs \eqref{DFBSDEs1}.
\begin{sch}\label{sch1}
Suppose that the initial condition $X_0$ for the forward SDE in \eqref{DFBSDEs1} and
the terminal condition $\varphi$ for the BSDE in \eqref{DFBSDEs1} are given.
\begin{enumerate}
\item For $n=N-1$, $t_N-t_{N-1}=\Delta^2$, solve $X^N$, $Y^{N-1}$ and $Z^{N-1}$ by
\begin{subequations}\label{scheme-step1}
\begin{align}
X^N &= \,X^{N-1} + b(t_{N-1},X^{N-1})\Delta^2 + \sigma(t_{N-1},X^{N-1})\Delta W_{N-1}^\top,\\
Z^{N-1}&=\,\frac{1}{\Delta^2}\mathbb{E}_{t_{N-1}}^{X^{N-1}}[Y^N\Delta W_{N-1}^\top], \label{ZN-11}\\
Y^{N-1}&=\,\mathbb{E}_{t_{N-1}}^{X^{N-1}}[Y^N]+\Delta^2 f^{N-1}.
\end{align}
\end{subequations}
\item
For $n = N-2,\dots,0$, solve $X^{n+1}$, $Y^n$, $Z^n$ by
\begin{subequations}\label{scheme-n}
\begin{align}
X^{n+1}&=X^n+\sum\limits_{\alpha\in \Gamma_2 \backslash \{v\}}g_\alpha(t_n, X^n)I_{\alpha, n},\\
\label{sch3:e1}
\frac{1}{2}\Delta Z^n&= -\frac{1}{2}\Delta\mathbb{E}_{t_n}^{X^n}[Z^{n+1}]+\mathbb{E}_{t_n}^{X^n}[Y^{n+1}\Delta
W_{n}^{\top}]+\frac{1}{2}\Delta\mathbb{E}_{t_n}^{X^n}[f^{n+1}\Delta W_{n}^{\top}],\\
\label{sch3:e2}
Y^n&= \mathbb{E}_{t_n}^{X^n}[Y^{n+1}]+\frac{1}{2}\Delta f^n
+\frac{1}{2}\Delta \mathbb{E}_{t_n}^{X^n}[f^{n+1}].
\end{align}
\end{subequations}
\end{enumerate}
\end{sch}

\begin{rem}
%
\begin{enumerate}
\item
In 2006, the authors in \cite{ZCP06} proposed the following
scheme for solving BSDE.
\begin{equation}\label{theta-scheme}
\begin{aligned}
Y^n=\,& \mathbb{E}_{t_n}^x[Y^{n+1}] + \Delta t_n[(1-\theta_1^n)\mathbb{E}_{t_n}^x
[f^{n+1}]+\theta_1^n f^n],\\
0=\,& \mathbb{E}_{t_n}^x[Y^{n+1}\Delta W_{n}^\top] + \Delta t_n(1-\theta_2^n)\mathbb{E}_{t_n}^x
[f^{n+1}\Delta W_{n}^\top]
-\Delta t_n\{(1-\theta_2^n)\mathbb{E}_{t_n}^x[Z^{n+1}]+\theta_2^n Z^n\},
\end{aligned}
\end{equation}
where $\Delta t_n = t_{n+1}-t_n$ and $f^n=f(t_n,Y^n,Z^n)$ for $n = N-1,\ldots,0$ with the parameters $\theta_1^n$ and
$\theta_2^n$ in $[0,1]$.
The C-N scheme for BSDE is the case $\theta_1^n = \theta_2^n = \frac{1}{2}$.
The second-order error estimate results were proved in \cite{ZLJ13}.
\item
In 2012, the authors in \cite{ZLZ12} developed the following $\theta$-scheme for BSDE:
Given the terminal values $Y^N$ and $Z^N$, solve $Y^n$ and $Z^n$ by
\begin{equation}\label{G-theta-scheme}
\begin{aligned}
Y^n =\,& \mathbb{E}_{t_n}^x[Y^{n+1}]
+ \Delta t_n\big[(1-\theta_1) \mathbb{E}_{t_n}^x[f^{n+1}]+\theta_1 f^n\big],\\
\theta_3\Delta t_n Z^n =\,& \theta_4 \Delta t_n \mathbb{E}_{t_n}^x[Z^{n+1}] + (\theta_3 - \theta_4)
\mathbb{E}_{t_n}^x[Y^{n+1}\Delta W_{n}^\top]
+(1-\theta_2)\Delta t_n \mathbb{E}_{t_n}^x[f^{n+1}\Delta W_{n}^\top],
\end{aligned}
\end{equation}
where $\Delta t_n = t_{n+1}-t_n$ and $f^n=f(t_n,Y^n,Z^n)$ for $n = N-1,\ldots,0$ with the deterministic parameters
$\theta_i \in [0,1]$ $(i = 1,2)$,
$\theta_3\in (0,1]$, and $\theta_4\in [-1,1]$ constrained by
$|\theta_4| \leq \theta_3$.

When $\theta_i = \frac{1}{2} (i=1,2,3)$ and $\theta_{4}=-\frac{1}{2}$,
the above scheme becomes the C-N scheme for BSDE.
In \cite{ZLZ12} the second-order convergence rate of the above scheme
was theoretically proved with the parameters $\theta_i \in [0,1]$ $(i = 1,2)$,
$\theta_3\in (0,1]$, and $\theta_4\in [-1,1]$ constrained by
$|\theta_4| < \theta_3$.
\item
By introducing the Gaussian process
$$
\Delta \tilde {W}_n = 4\frac{W_{t_{n+1}}-W_{t_n}}{\Delta t_n}
- 6\frac{\int_{t_n}^{t_{n+1}}(s-t_n)dW_s}{(\Delta t_n)^2},
$$
the authors in \cite{CM14} proposed the following scheme for solving BSDE.
\begin{equation}\label{scheme-2}
Z^{n} = \mathbb{E}_{t_n}[\big(Y^{n+1}+\Delta t_n f^{n+1}\big)\Delta\tilde{W}_n],\quad
Y_{n}=\mathbb{E}_{t_n}[Y_{n+1}]+\frac{\Delta t_n}{2}\big(f^n+\mathbb{E}_{t_n}[f^{n+1}]\big),
\end{equation}
where $f^n= f(X^n,Y^n,Z^n)$.
The authors in \cite{CM14} only obtained the second-order
convergence rate of the above scheme
for $X^{n}=X_{t_n}$, i.e., the forward SDE was not discretized.
Note that the introduced stochastic process $\Delta\tilde W_n$ in the scheme
will cause computation expensive for solving BSDE,
and further it is too complex to use the scheme to solve FBSDEs.
\end{enumerate}
\end{rem}

\section{Error estimates of the C-N scheme}\label{3}

\subsection{Assumptions on approximations of $X_t$}
It is obvious that the accuracy of Scheme \ref{sch1} depends on the accuracy of \eqref{Xnn10} for
solving the forward SDE $X_t$ in \eqref{DFBSDEs1}.
In this subsection, to investigate the effect of approximation of forward SDE on the approximation solutions
$(Y^n,Z^n)$ in the Crank-Nicolson scheme, the following assumptions are made.

\begin{assum}\label{hyp0}
Suppose that $X_0$ is $\mathcal{F}_0$-measurable with $\mathbb{E}[|X_0|^{2}]<\infty$,
and that $b$ and $\sigma$ are $L^2$-measurable in $(t,x)\in [0, T]\times \mathbb{R}^d$,
are linear growth bounded and uniformly Lipschitz continuous, i.e.,
there exist positive constants $K$ and $L$ such that
\begin{equation} \label{linear-Lip}
\begin{aligned}
&|b(t,x)|^2 \leq K(1+|x|^2), \quad\qquad |\sigma(t,x)|^2 \leq K(1+|x|^2),\\
&|b(t,x)-b(t,y)| \leq L|x-y|,   \quad |\sigma(t,x)-\sigma(t,y)| \leq L|x-y|.
\end{aligned}
\end{equation}
\end{assum}

\begin{assum}\label{hyp1}
There is a constant $K'>0$ such that the coefficient matrix $\sigma$ satisfies
the uniformly elliptic condition
\begin{equation}
\sigma(t,x)\sigma(t,x)^\top\geq \frac{1}{K'}I_{d\times d}, \quad \forall \,(t,x)\in [0,T]\times \mathbb{R}^d.
\end{equation}
\end{assum}

Under the Assumption \ref{hyp0}, if $\mathbb{E}[|X_0|^{2m}]<\infty$ for some integer
$m\geq 1$, the solution of SDE in \eqref{DFBSDEs2} has the estimate
\begin{equation}\label{est_Xs}
\mathbb{E}\big[|X_s^{t_n,X^n}|^{2m}\big] \leq (1+|X^n|^{2m})e^{C(s-t_n)},
\end{equation}
for any $s\in[t_n,T]$,
where $C$ is a positive constant depending only on the constants $K$, $L$ and $m$ \cite{KP92}.

In fact, the weak order-2 It\^{o}-Taylor schemes \eqref{Xnn10} for solving SDE
in \eqref{DFBSDEs1}
have the following approximation properties and the stability property
(see Proposition 5.11.1 in \cite{KP92} and Assumption 4.2 in\cite{ZZJ14}).
\begin{assum}\label{hyp3}
The approximation solution $X^{n}$ ($n=0,1,\ldots,N-1$) has the approximation properties
\begin{subequations}\label{Eq45}
\begin{align}\label{c5Ex}
& \big|\mathbb{E}_{t_n}^{X^n}\big[g(X^{n+1})-g(X^n)\big]\big|
\leq C(1+|X^n|^{2r_1})\Delta,\\ \label{weak-beta}
&\big|\mathbb{E}_{t_n}^{X^n}\big[g(X_{t_{n+1}}^{t_n,X^n})-g(X^{n+1})\big]\big|
\leq C(1+|X^n|^{2r_2})\Delta^{3},\\ \label{c5gw}
&\big|\mathbb{E}_{t_n}^{X^n}[\big(g(X_{t_{n+1}}^{t_n,X^n})-g(X^{n+1})\big)\Delta W_{n}^{\top}]\big|
\leq  C(1+|X^n|^{2r_3})\Delta^{3},
\end{align}
\end{subequations}
and the stable estimate property
\begin{equation}\label{c5st}
\max\limits_{0\le n\le N}\mathbb{E}\big[|X^n|^r\big]\leq C(1+\mathbb{E}[|X_0|^r]),
\end{equation}
where $r_i$ $(i = 1,2,3)$ and $r$ are positive integers,
and $C$ is a positive constant depending on $g\in C_b^{2\beta+2}$.
\end{assum}

The proposition below for the approximation \eqref{Xnn10} of $X_t$ holds as well.
\begin{pro}\label{hyp2}
Suppose the functions $b,\sigma\in C_{b}^{1,2}$.
For $n=0,1,\ldots, N-1$, we have the estimates
\begin{align}\label{phi}
\sum\limits_{\alpha\in \Gamma_2\backslash \{v\}}\mathbb{E}_{t_n}^{X^n}[|g_\alpha(t_{n+1},X^{n+1})I_{\alpha, n+1}|^2]\leq&\; C\Delta ,\\
\label{phi1}
\mathbb{E}_{t_n}^{X^n}\big[|\sigma(t_{n+1},X^{n+1})-\sigma(t_n,X^n)|^2\big]\leq&\; C\Delta ,
\end{align}
where $C$ is a positive constant independent of $X^{n}$, $X^{n+1}$, and the time partition.
\end{pro}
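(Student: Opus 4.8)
The plan is to reduce both estimates to two routine ingredients — the $L^2$ moment bounds for multiple It\^o integrals and $L^2$ bounds for the It\^o coefficient functions evaluated along the discrete flow — and then to sum over the finitely many multi-indices of length at most two.

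For \eqref{phi} I would first note that $\Gamma_2\setminus\{v\}$ is a finite set (the $d+1$ indices $(0),(1),\dots,(d)$ of length one and the $(d+1)^2$ indices $(j_1,j_2)$ of length two), so it suffices to bound each summand. Fix such an $\alpha$. The structural point is that $g_\alpha(t_{n+1},X^{n+1})$ is $\mathcal{F}_{t_{n+1}}$-measurable — $X^{n+1}$ being built from $X^n$ and the Brownian increments on $[t_n,t_{n+1}]$ through \eqref{Xnn10}, and $X^n$ being $\mathcal{F}_{t_n}$-measurable — whereas $I_{\alpha,n+1}$ is an iterated It\^o integral over the next subinterval and is therefore independent of $\mathcal{F}_{t_{n+1}}$. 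Conditioning first on $\mathcal{F}_{t_{n+1}}$ and using this independence, the summand factors as
\[
\mathbb{E}_{t_n}^{X^n}\big[|g_\alpha(t_{n+1},X^{n+1})\,I_{\alpha,n+1}|^2\big]
=\mathbb{E}\big[|I_{\alpha,n+1}|^2\big]\;\mathbb{E}_{t_n}^{X^n}\big[|g_\alpha(t_{n+1},X^{n+1})|^2\big].
\]
For the first factor I would invoke the classical moment estimate for iterated stochastic integrals (see \cite{KP92}): writing $n(\alpha)$ for the number of zero components of $\alpha$ and $\delta\le\Delta$ for the length of the relevant subinterval, $\mathbb{E}[|I_{\alpha,n+1}|^2]\le C\,\delta^{\,l(\alpha)+n(\alpha)}$, so this factor is $\le C\Delta$ for every $\alpha$ with $l(\alpha)\ge1$ and of order exactly $\delta$ only for the $d$ single-increment indices $(j)$, whose coefficient is $\sigma^j$; all other indices (the length-two ones and the time index $(0)$) contribute $O(\Delta^2)$. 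For the second factor I would use that $b,\sigma$ have at most linear growth (Assumption \ref{hyp0}) and that the length-two coefficients $L^0b$, $L^jb$, $L^0\sigma^{j'}$, $L^j\sigma^{j'}$ are, by \eqref{defL}, bilinear in $b,\sigma$ and their first $t$-derivative and first/second $x$-derivatives, the latter uniformly bounded since $b,\sigma\in C_b^{1,2}$; hence every $g_\alpha$ with $l(\alpha)\le2$ has at most linear growth in $x$, and with $X^{n+1}=X^n+\phi^n$, the standard bound $\mathbb{E}_{t_n}^{X^n}[|\phi^n|^2]\le C\Delta$ (the $i$-th component $\phi_i^n$ having $L^2$-leading term $\sum_{j_1=1}^d\sigma_{ij_1}(t_n,X^n)\Delta W_n^{j_1}$), and the moment estimates \eqref{est_Xs} and \eqref{c5st}, one controls $\mathbb{E}_{t_n}^{X^n}[|g_\alpha(t_{n+1},X^{n+1})|^2]$. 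Summing, the $(j)$-terms yield the $O(\Delta)$ bound and all the rest are $O(\Delta^2)$, which is \eqref{phi}.

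For \eqref{phi1} I would split $\sigma(t_{n+1},X^{n+1})-\sigma(t_n,X^n)=[\sigma(t_{n+1},X^{n+1})-\sigma(t_n,X^{n+1})]+[\sigma(t_n,X^{n+1})-\sigma(t_n,X^n)]$. The first bracket is pointwise of size $\|\partial_t\sigma\|_\infty\Delta$ (again $b,\sigma\in C_b^{1,2}$), hence contributes $O(\Delta^2)$ after squaring; the second is $\le L\,|X^{n+1}-X^n|=L\,|\phi^n|$ by the Lipschitz property in Assumption \ref{hyp0}, so $\mathbb{E}_{t_n}^{X^n}[|\sigma(t_n,X^{n+1})-\sigma(t_n,X^n)|^2]\le L^2\,\mathbb{E}_{t_n}^{X^n}[|\phi^n|^2]\le C\Delta$ by the same bound for $\phi^n$. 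Adding the two pieces (and absorbing $\Delta^2$ into $\Delta$) gives \eqref{phi1}. The end point $n=N-1$ is benign: there $X^N$ comes from the explicit Euler map with step $\Delta^2$, so all increments are even smaller and the bounds hold a fortiori.

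I do not expect a genuine obstacle here — the proposition is essentially careful bookkeeping over the multi-indices of length at most two — with the only delicate points being (i) recognising that only the single-Brownian-increment terms reach order $\Delta$ while everything else is $O(\Delta^2)$, and (ii) the moment bounds for $g_\alpha(t_{n+1},X^{n+1})$ and for $\phi^n$: because $b,\sigma$ are only linear-growth (their derivatives, not the functions themselves, being bounded), these estimates naturally carry a factor $1+|X^n|^{2r}$, consistent with the form of the bounds in Assumption \ref{hyp3}; the constant $C$ claimed here to be independent of $X^n$ and $X^{n+1}$ then follows either under the additional (and standard) assumption that $b,\sigma$ are bounded, or after taking the full expectation and invoking the stability estimate \eqref{c5st}.
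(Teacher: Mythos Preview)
Your argument is correct and is the natural route; the paper itself states this proposition without proof, treating it as a direct consequence of the standard moment bounds for iterated It\^o integrals and the regularity of the coefficient functions. Your observation in point (ii) is well taken: as stated, with $b,\sigma$ only of linear growth, the conditional bounds \eqref{phi}--\eqref{phi1} naturally carry a polynomial factor in $|X^n|$, and the claim that $C$ is independent of $X^n$ implicitly relies either on boundedness of $b,\sigma$ (which the paper does assume elsewhere, e.g.\ in Lemma~\ref{lem56}) or on passing to full expectation via \eqref{c5st}; the paper is silent on this point, and your caveat is the right way to flag it.
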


\subsection{Error equations}
Let $(X_r^{t,x}, Y_r^{t,x},Z_r^{t,x})_{t\leq r\leq T}$  be the solution of the FBSDEs (\ref{DFBSDEs2})
with the terminal condition $Y_T^{t,x}=\varphi(X_T^{t,x})$,
$(X^{n+1},Y^n,Z^n)$ ($0\leq n\leq N-1$) be its
approximation solution of Scheme \ref{sch1}, and
let the truncation errors $R_{y1}^{n}$, $R_{y2}^{n}$, $R_{z1}^{n}$ and $R_{z2}^{n}$
be defined in \eqref{ry1} and \eqref{crz} for $0\leq n \leq N$, respectively.

For the sake of presentation simplicity, we denote
\begin{align*}
&e_Y^n:=Y_{t_n}^{t_n,X^n}-Y^n, \quad e_Z^n:=Z_{t_n}^{t_n,X^n}-Z^n, \\
&e_f^n=f(t_n,X^n,Y_{t_n}^{t_n,X^n},Z_{t_n}^{t_n,X^n})-f(t_n,X^n,Y^n,Z^n),
\end{align*}
for $n =N-2,\ldots,1,0$.
Subtracting \eqref{s3:e4} and \eqref{s2e13} from \eqref{sch3:e2} and \eqref{sch3:e1}, respectively, we get
\begin{align}
&e_Y^n= \mathbb{E}_{t_n}^{X^n}[e_Y^{n+1}]
+\frac{1}{2}\Delta  e_f^n+\frac{1}{2}\Delta \mathbb{E}_{t_n}^{X^n}[e_f^{n+1}]+\sum\limits_{j=1}^2R_{yj}^n, \label{a1} \\
&\Delta e_Z^n=-\Delta \mathbb{E}_{t_n}^{X^n}[e_Z^{n+1}]+2\mathbb{E}_{t_n}^{X^n}[e_Y^{n+1}\Delta W_{n}^\top]
+\Delta \mathbb{E}_{t_n}^{X^n}[e_f^{n+1}\Delta W_{n}^\top]+2\sum\limits_{j=1}^2R_{zj}^n. \label{s3:e32z}
\end{align}
Let $e_{\nabla Y}^n := \nabla_{x^n} Y_{t_n}^{t_n,X^n}-\nabla_{x^n} Y^n$
and $e_{\nabla Z}^n := \nabla_{x^n} Z_{t_n}^{t_n,X^n}-\nabla_{x^n} Z^n$.
Taking variations w.r.t. $X^{n}$
on both sides of equations \eqref{a1} and \eqref{s3:e32z} gives us the following
two equations:
\begin{equation}\label{s3:e218}
e_{\nabla Y}^n=\; \mathbb{E}_{t_n}^{X^n}[e_{\nabla Y}^{n+1}\nabla_{x^n} X^{n+1}]
+\frac{1}{2}\Delta  e_{\nabla f}^n+\frac{1}{2}\Delta \mathbb{E}_{t_n}^{X^n}[e_{\nabla f}^{n+1}\nabla_{x^n} X^{n+1}]+\sum\limits_{j=1}^2\nabla_{x^n}R_{yj}^n
\end{equation}
and
\begin{equation}\label{s3:e3221}
\begin{array}{ll}
 \Delta e_{\nabla Z}^n=&\!\!\!\!-\Delta \mathbb{E}_{t_n}^{X^n}[e_{\nabla Z}^{n+1}\nabla_{x^n} X^{n+1}]+2\mathbb{E}_{t_n}^{X^n}[\Delta W_{n} e_{\nabla Y}^{n+1}\nabla_{x^n} X^{n+1}]\\
&+\Delta \mathbb{E}_{t_n}^{X^n}[\Delta W_{n}e_{\nabla f}^{n+1} \nabla_{x^n} X^{n+1}]+2\sum\limits_{j=1}^2\nabla_{x^n}R_{zj}^n,
\end{array}
\end{equation}
where $\Delta W_{n}=(\Delta W_{n}^1, \ldots, \Delta W_{n}^d)^\top $,
$e_{\nabla Y}^{n+2}:=\big(e_{\nabla Y}^{1,n+2},\ldots, e_{\nabla Y}^{d,n+2}\big)=\big(\nabla_{x_1^n} e_Y^{n+2},\ldots, \nabla_{x_d^n} e_Y^{n+2}\big)$, and
$$
\begin{aligned}
e^{n}_{\nabla f} =\;& f_X^{t_n,X^n}-f_X^n
+\big(f_Y^{t_n,X^n}-f_Y^n\big)\nabla_{x^n} Y_{t_{n}}^{t_n,X^n}+f_Y^n e_{\nabla Y}^n\\
&+\big(f_Z^{t_n,X^n}-f_Z^n\big)\nabla_{x^n} Z_{t_n}^{t_n,X^n}
+f_Z^n e_{\nabla Z}^{n}.
\end{aligned}
$$

%

\subsection{Main error estimate results}

Now we state our  main error estimate results in Theorems \ref{thm1} and \ref{thm2} below.

\begin{thm}\label{thm1}
For the weak order-2 It\^o-Taylor approximation 
$X^{n+1}$ satisfying \eqref{Xnn10},
if $b,\sigma\in C_b^{1,3}$ and $f\in C_b^{1,2,2,2}$,
then under Hypotheses \ref{hyp0}, 
for $0\leq n\leq N-2$, it holds that
\begin{equation}\label{eq:thm1}\begin{aligned}
&\mathbb{E}\big[|e_Y^n|^2+|e_Z^n|^2+|e_{\nabla Y}^n|^2+\Delta |e_{\nabla Z}^n|^2\big]\\
\leq \;&
C\,\mathbb{E}
\big[|e_Y^{N-1}|^2+|e_Z^{N-1}|^2
+|e_{\nabla Y}^{N-1}|^2+\Delta |e_{\nabla Z}^{N-1}|^2\big]\\&
+C\sum_{i=n}^{N-2}\sum\limits_{j=1}^2\mathbb{E}\Big[\frac{1}{\Delta^3}
\Big(|\mathbb{E}_{t_i}^{X^i}[R_{yj}^{i+1}\Delta W_{i}^\top]|^2+|R_{zj}^i-\mathbb{E}_{t_i}^{X^i}[R_{zj}^{i+1}]|^2\Big)\\&
\hspace{2cm}+\frac{1}{\Delta}
\Big(|\mathbb{E}_{t_i}^{X^i}[R_{yj}^{i+1}]|^2
+|\mathbb{E}_{t_i}^{X^i}[\nabla_{x^i} R_{yj}^{i+1}]|^2
+|\mathbb{E}_{t_i}^{X^i}[\nabla_{x^i} R_{zj}^{i+1}]|^2\\& \hspace{2.5cm}
+|\nabla_{x^i}\mathbb{E}_{t_i}^{X^i}[R_{yj}^{i+1}\Delta W_{i}^\top]|^2+|R_{yj}^i|^2+|\nabla_{x^i} R_{yj}^{i}|^2+|\nabla_{x^i} R_{zj}^{i}|^2\\& \hspace{2.5cm}
+\mathbb{E}_{t_i}^{X^i}[|R_{zj}^{i+1}|^2]
+\mathbb{E}_{t_i}^{X^i}[|\nabla_{x^{i+1}}R_{zj}^{i+1}|^2]
\Big)\Big],
\end{aligned}\end{equation}
where $C$ is a generic positive constant depending on $d$, $T$, $K'$, and upper
bounds of derivatives of $b$, $\sigma$ and $f$.
\end{thm}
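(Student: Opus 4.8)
The plan is to treat the coupled system of error equations \eqref{a1}, \eqref{s3:e32z}, \eqref{s3:e218}, \eqref{s3:e3221} as a backward recursion and to derive a discrete Gronwall-type inequality for the weighted quantity $\mathcal{E}^n := \mathbb{E}[|e_Y^n|^2+|e_Z^n|^2+|e_{\nabla Y}^n|^2+\Delta|e_{\nabla Z}^n|^2]$. The first step is to isolate, from \eqref{a1} and \eqref{s3:e32z}, clean recursions for $e_Y^n$ and for $\Delta e_Z^n$. Using the tower property and $\mathbb{E}_{t_n}^{X^n}[\,\cdot\,\Delta W_n^\top]$ applied to \eqref{a1}, one extracts a bound on $\mathbb{E}_{t_n}^{X^n}[e_Y^{n+1}\Delta W_n^\top]$ in terms of $\mathbb{E}_{t_n}^{X^n}[e_Y^{n+2}\Delta W_{n+1}^\top]$, lower-order terms carrying factors of $\Delta$, and the weighted truncation terms $\mathbb{E}_{t_i}^{X^i}[R_{yj}^{i+1}\Delta W_i^\top]/\Delta^{3/2}$; substituting this into \eqref{s3:e32z} removes the "bad" $\frac1\Delta$ blow-up that a naive estimate of $\mathbb{E}_{t_n}^{X^n}[e_Y^{n+1}\Delta W_n^\top]$ would produce. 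This substitution is exactly why the right-hand side of \eqref{eq:thm1} involves $R_{yj}^{i+1}\Delta W_i^\top$ and $R_{zj}^i-\mathbb{E}_{t_i}^{X^i}[R_{zj}^{i+1}]$ rather than $R_{zj}^i$ alone. The Lipschitz property of $f$ gives $|e_f^n|\le L(|e_Y^n|+|e_Z^n|)$ and similarly $|e_{\nabla f}^n|$ is controlled by $|e_{\nabla Y}^n|+|e_{\nabla Z}^n|$ plus terms involving the already-bounded exact variations (using Assumption \ref{hyp1} for the required uniform bounds on $\nabla_x Y, \nabla_x Z$ and on $(\nabla_x X)^{-1}$).

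Next I would square each error equation, take full expectations, and apply Young's inequality $2ab\le \varepsilon a^2+\varepsilon^{-1}b^2$ with $\varepsilon$ chosen proportional to $\Delta$ on the cross terms, together with $|\mathbb{E}_{t_n}^{X^n}[\cdot]|^2\le \mathbb{E}_{t_n}^{X^n}[|\cdot|^2]$ (Jensen) and the moment bound \eqref{c5st}. For the $e_Z^n$ equation, after the cancelation above, dividing by $\Delta^2$ and using $\mathbb{E}[|\Delta W_n|^2\,|\,\mathcal{F}_{t_n}]=d\Delta$ converts the $\mathbb{E}_{t_n}^{X^n}[e_Y^{n+1}\Delta W_n^\top]$-type contributions into $O(1)\cdot\mathbb{E}[|e_Y|^2$-differences$]$ plus $\Delta^{-3}$-weighted squared remainders; the factor $\Delta |e_{\nabla Z}^n|^2$ in $\mathcal{E}^n$ is needed so that the variational equation \eqref{s3:e3221}, which carries an extra factor $\Delta W_n$, closes with the same structure. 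Using Proposition \ref{hyp2} to bound $\mathbb{E}_{t_n}^{X^n}[|\nabla_{x^n}X^{n+1}|^2]$ and $\mathbb{E}_{t_n}^{X^n}[|\sigma(t_{n+1},X^{n+1})-\sigma(t_n,X^n)|^2]\le C\Delta$ controls the coefficients multiplying the $(n+1)$-errors in \eqref{s3:e218}--\eqref{s3:e3221}, and Assumption \ref{hyp3} (approximation and stability of the It\^o--Taylor scheme) handles the terms where the discretized forward flow is compared to the exact one. Assembling, I obtain
\[
\mathcal{E}^n \le (1+C\Delta)\,\mathcal{E}^{n+1} + C\,\mathcal{R}^n,
\]
where $\mathcal{R}^n$ is precisely the bracketed sum of weighted squared truncation errors at level $n$ appearing in \eqref{eq:thm1}.

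Finally, iterating this one-step inequality from $n$ up to $N-1$ and using $(1+C\Delta)^{N}\le e^{CT}$ gives
\[
\mathcal{E}^n \le C\,\mathcal{E}^{N-1} + C\sum_{i=n}^{N-2}\mathcal{R}^i,
\]
which is \eqref{eq:thm1}. The main obstacle I expect is the $Z$-estimate: a direct bound of $\mathbb{E}_{t_n}^{X^n}[e_Y^{n+1}\Delta W_n^\top]$ only gives $O(\Delta^{1/2})\|e_Y^{n+1}\|$, which after division by $\Delta$ in \eqref{s3:e32z} is fatal; the error-cancelation step must instead re-express this quantity, via \eqref{a1} one step forward in time, as a \emph{difference} of such martingale moments plus genuinely $O(\Delta^{3/2})$ remainders, and then one must carefully track how this telescoping interacts with the backward iteration without losing a power of $\Delta$. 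A secondary difficulty is that the variational equations \eqref{s3:e218}--\eqref{s3:e3221} are not autonomous — $e_{\nabla f}^n$ contains $e_Y^n, e_Z^n$ (through the differences $f_Y^{t_n,X^n}-f_Y^n$ etc.) — so the four components of $\mathcal{E}^n$ must be estimated simultaneously rather than in sequence, and the constants must be shown to be uniform in $n$ and in the partition.
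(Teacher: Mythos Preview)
Your overall outline is reasonable, but the proposed one-step recursion $\mathcal{E}^n \le (1+C\Delta)\mathcal{E}^{n+1} + C\mathcal{R}^n$ cannot be established from \eqref{a1}--\eqref{s3:e3221} in the way you describe, and the mechanism you sketch for the ``error-cancelation step'' is not the one that actually works. In \eqref{s3:e32z} the coefficient of $\mathbb{E}_{t_n}^{X^n}[e_Z^{n+1}]$ is exactly $-1$, so there is no slack for a one-step contraction in $e_Z$. The paper instead substitutes \emph{both} the $(n{+}1)$-level $Y$-equation and the $(n{+}1)$-level $Z$-equation into \eqref{s3:e32z}, producing a \emph{two-step} identity
\[
e_Z^n = \mathbb{E}_{t_n}^{X^n}[e_Z^{n+2}] + \tfrac{2}{\Delta}\,\mathbb{E}_{t_n}^{X^n}\!\big[e_Y^{n+2}(\Delta W_n^\top - \Delta W_{n+1}^\top)\big] + \ldots;
\]
the analogous two-step substitution is done for $e_Y^n$, $e_{\nabla Y}^n$, $e_{\nabla Z}^n$, and the final Gronwall iteration runs from $n$ to $n+2$, not $n$ to $n+1$.

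The second gap is in how the dangerous term $\tfrac{2}{\Delta}\mathbb{E}_{t_n}^{X^n}[e_Y^{n+2}(\Delta W_n^\top - \Delta W_{n+1}^\top)]$ is controlled. It is not handled by a telescoping of ``martingale moments'' as you suggest; rather, Malliavin integration-by-parts (Lemma \ref{nualart}) converts each piece into $\mathbb{E}_{t_n}^{X^n}[e_{\nabla Y}^{n+2}\int D_t X^{n+2}\,dt]$, and the \emph{difference} of the two time-integrals yields the increment $\sigma(t_n,X^n)-\sigma(t_{n+1},X^{n+1})$, which is $O(\Delta^{1/2})$ by Proposition \ref{hyp2}. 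This is exactly why the variational errors $e_{\nabla Y}, e_{\nabla Z}$ must be carried in $\mathcal{E}^n$: they are not merely auxiliary but are what absorb the $e_Z$-blowup. The resulting estimates generate conditional-variance terms $Var^n(e_{\nabla Y}^{n+2})$, $Var^{n+1}(e_Z^{n+2})$, etc., which are then absorbed back into the leading Jensen terms via $|\mathbb{E}[\xi]|^2 + c\,Var(\xi) \le \mathbb{E}[|\xi|^2]$ for $c\le 1$; this variance-absorption, not Jensen alone, is what closes the two-step recursion with a $(1+C\Delta)$ factor. Finally, your attribution of Assumption \ref{hyp1} is off: the uniform ellipticity of $\sigma$ is not used for a priori bounds on $\nabla_x Y,\nabla_x Z$, but to invert the Malliavin relation and obtain $\Delta\,|\mathbb{E}_{t_{n+1}}^{X^{n+1}}[e_{\nabla Z}^{n+2}]|^2 \le C\,Var^{n+1}(e_Z^{n+2}) + C\Delta^2\mathbb{E}_{t_{n+1}}^{X^{n+1}}[|e_{\nabla Z}^{n+2}|^2]$, without which the $e_{\nabla Z}$ estimate does not close.
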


\begin{thm}\label{thm2}
Suppose  $b,\sigma\in C_b^{3,6}$, $f\in C_b^{3,6,6,6}$, and
$\varphi\in C_b^{7+\alpha}$ for some $\alpha\in (0,1)$.
Then for the weak order-2 It\^o-Taylor approximation solution
$X^{n+1}$, $0\leq n\leq N-2$, under Assumptions \ref{hyp0}--\ref{hyp3},
it holds that
\begin{equation}\label{eq:thm2}
\max_{0\leq n\leq N}\mathbb{E}\left[|e_Y^n|^2+|e_Z^n|^2+|e_{\nabla Y}^n|^2
+\Delta |e_{\nabla Z}^n|^2\right]
\leq C\Delta^4,
\end{equation}
where $C$ is a generic positive constant depending on $d$, $T$, $K'$, $K$, $L$, the initial value
of $X_t$ in \eqref{DFBSDEs1}, and upper bounds of derivatives
of $b$, $\sigma$, $f$ and $\varphi$.
\end{thm}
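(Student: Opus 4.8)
The plan is to combine the general \emph{a priori} error estimate of Theorem \ref{thm1} with sharp bounds on the truncation errors $R_{y1}^n,R_{y2}^n,R_{z1}^n,R_{z2}^n$ and their $\nabla_{x^n}$-variations. The right-hand side of \eqref{eq:thm1} has exactly two ingredients: the terminal error at $n=N-1$, and a sum over $i=n,\ldots,N-2$ of (weighted) mean-square norms of the truncation errors. I would estimate each piece and then let the $\sum_{i}$ collapse against the weights $\tfrac1\Delta$, $\tfrac1{\Delta^3}$.

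\textbf{Step 1: the terminal error at $n=N-1$.} Using the definitions \eqref{scheme-step1}, \eqref{ZN-1}--\eqref{s2.3} and the Euler-type truncation errors $R_{yj}^{N-1},R_{zj}^{N-1}$, together with the weak approximation properties \eqref{weak-beta}--\eqref{c5gw} applied to $g=\varphi$ (legitimate since $\varphi\in C_b^{7+\alpha}$) and the smoothness $u\in C^{1,2}$ guaranteed by parabolic theory under the hypotheses on $b,\sigma,f,\varphi$, one shows
$
\mathbb{E}\big[|e_Y^{N-1}|^2+|e_Z^{N-1}|^2+|e_{\nabla Y}^{N-1}|^2+\Delta|e_{\nabla Z}^{N-1}|^2\big]\le C\Delta^4.
$
The key point is that the last step has length $\Delta^2$, so even though $R_{y1}^{N-1}=O(\Delta^2\cdot\Delta^2)$ only to first order in the Euler rule, the $\tfrac1{\Delta^2}$ prefactor in \eqref{ZN-1} is compensated; one checks that each contribution is $O(\Delta^2)$ in $L^2$, hence $O(\Delta^4)$ after squaring, and that taking the variation $\nabla_{x^{N-1}}$ costs no extra power of $\Delta$ thanks to the variational equations \eqref{s2:e1} and the moment bound \eqref{c5st}.

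\textbf{Step 2: the interior truncation errors for $0\le i\le N-2$.} This is the technical heart. Writing $f_s^{t_i,X^i}=u$-based expressions via \eqref{s2:e2}, I would Taylor/It\^o--Taylor expand $\mathbb{E}_{t_i}^{X^i}[f_s^{t_i,X^i}]$ around $s=t_i$ and around $s=t_{i+1}$; since the trapezoidal rule is exact to second order, the leading $O(\Delta)$ and $O(\Delta^2)$ terms cancel in $R_{y1}^i$, leaving $R_{y1}^i=O(\Delta^3)$ in the appropriate conditional-expectation sense. For $R_{y2}^i$ and $R_{z2}^i$ one uses \eqref{weak-beta}, \eqref{c5gw} and the Lipschitz/$C_b$ regularity of $u$ to replace $X_{t_{i+1}}^{t_i,X^i}$ by $X^{i+1}$ at cost $O(\Delta^3)$. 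The delicate quantities are the combinations appearing with the $\tfrac1{\Delta^3}$ weight, namely $\mathbb{E}_{t_i}^{X^i}[R_{yj}^{i+1}\Delta W_i^\top]$ and $R_{zj}^i-\mathbb{E}_{t_i}^{X^i}[R_{zj}^{i+1}]$: here a crude $|R|=O(\Delta^3)$ bound is \emph{not} enough, and I would invoke the Malliavin integration-by-parts formula (Lemma \ref{nualart}) together with the Malliavin-derivative computations \eqref{phin}, \eqref{DY} — exactly the ``truncation error cancelation technique'' advertised in the abstract — to extract an extra half/one power of $\Delta$ from the $\Delta W_i$ factor and from the telescoping difference across consecutive steps. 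The upshot is that each summand in \eqref{eq:thm1} is $O(\Delta^5)$ after the weight, so $\sum_{i=n}^{N-2}(\cdots)\le C\,N\,\Delta^5=O(\Delta^4)$.

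\textbf{Step 3: conclusion and the main obstacle.} Plugging Steps 1--2 into \eqref{eq:thm1} gives $\mathbb{E}[|e_Y^n|^2+|e_Z^n|^2+|e_{\nabla Y}^n|^2+\Delta|e_{\nabla Z}^n|^2]\le C\Delta^4$ uniformly in $0\le n\le N-2$; combining with the $n=N-1,N$ cases from Step 1 yields the stated $\max_{0\le n\le N}$ bound. The moment factors $(1+|X^i|^{2r})$ produced along the way are absorbed by \eqref{c5st} and \eqref{est_Xs} after taking the outer expectation. I expect the main obstacle to be Step 2, specifically showing that the $\tfrac1{\Delta^3}$-weighted terms are genuinely $O(\Delta^5)$: this requires not just that $R_{zj}^i=O(\Delta^3)$ but that the \emph{difference} $R_{zj}^i-\mathbb{E}_{t_i}^{X^i}[R_{zj}^{i+1}]$ and the $\Delta W_i$-projection gain an extra $\sqrt\Delta$ (or $\Delta$), which in turn forces one to carry the It\^o--Taylor expansions of $X^{i+1}$, $Y^{i+1}$, $Z^{i+1}$ and their Malliavin derivatives to sufficiently high order and to track the cancellation of several families of leading terms simultaneously — substantially more intricate than the BSDE-only argument of \cite{ZLJ13} because the forward discretization $\phi^n$ now enters every expansion.
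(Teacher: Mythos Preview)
Your proposal is correct and follows essentially the same route as the paper: you feed Theorem~\ref{thm1} with (i) a terminal estimate at $n=N-1$ exploiting the $\Delta^2$-length last step (the paper's Lemma~\ref{lem-ini21}), and (ii) truncation-error bounds showing each summand is $O(\Delta^5)$ after weighting, with the $\tfrac{1}{\Delta^3}$-weighted terms requiring a genuine extra power of $\Delta$ via Malliavin integration-by-parts and a telescoping cancellation across consecutive steps (the paper's Lemmas~\ref{lem52}--\ref{lem55} and~\ref{lem57}). Your diagnosis of the main obstacle --- that $R_{zj}^i-\mathbb{E}_{t_i}^{X^i}[R_{zj}^{i+1}]$ and $\mathbb{E}_{t_i}^{X^i}[R_{yj}^{i+1}\Delta W_i^\top]$ must be shown to be $O(\Delta^4)$ rather than merely $O(\Delta^3)$ --- matches exactly where the paper concentrates its effort.
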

\begin{rem}
Scheme \ref{sch1} is stable, which is implied by Theorem \ref{thm1}, and its solution
continuously depends on terminal condition.
That is, for any given positive number $\varepsilon$, there exits a positive
number $\delta$, for different terminal conditions
$(Y_1^N,Z_1^N)$ and $(Y_2^N,Z_2^N)$,
if $\mathbb{E}[|Y_1^N-Y_2^N|^2]<\delta$ and $\mathbb{E}[|Z_1^N-Z_2^N|^2]<\delta$,
then for $0\leq n\leq N-1$, we have
\begin{equation*}
\begin{aligned}
&\mathbb{E}\big[|Y_1^n-Y_2^n|^2 + |Z_1^n-Z_2^n|^2
\big]<\varepsilon.
\end{aligned}
\end{equation*}

\end{rem}

\section{Proofs of the main results}
In this section, we will give rigorous proofs of Theorems \ref{thm1} and \ref{thm2}.
In the sequel, we will use $Var^{n}(G)$
to denote the conditional variance of random variable $G$, i.e.,
$Var^n(G)=\mathbb{E}_{t_n}^{X^n}[|G|^2]-|\mathbb{E}_{t_n}^{X^n}[G]|^2.\quad$

Before giving the proof of Theorem \ref{thm1}, we introduce the following useful lemma.
\begin{lem}\label{th1}
For  the weak order-2 It\^o-Taylor approximation $X^{n+1}$ satisfying \eqref{Xnn10},
let $b,\sigma\in C_b^{1,3}$ and $f\in C_b^{1,2,2,2}$.
Then for $0\leq n\leq N-2$, it holds that
\begin{equation}\label{ceq}
\begin{aligned}
&\;\, \Delta \big(|e_Y^{n}|^2+|e_Z^{n}|^2
+|e_{\nabla Y}^{n}|^2+|e_{\nabla Z}^{n}|^2\big)\\
\leq & \;
C\Delta \mathbb{E}_{t_n}^{X^n}\big[|e_Y^{n+1}|^2+|e_Z^{n+1}|^2+|e_{\nabla Y}^{n+1}|^2
+|e_{\nabla Z}^{n+1}|^2\big]+CVar^{n}(e_Y^{n+1})+CVar^{n}(e_{\nabla Y}^{n+1})\\
&
+C\sum\limits_{j=1}^2\Big(\Delta|R_{yj}^{n}|^2+\Delta|\nabla_{x^n}R_{yj}^{n}|^2
+\frac{1}{\Delta}|R_{zj}^{n}|^2+\frac{1}{\Delta }|\nabla_{x^n}R_{zj}^n|^2\Big),
\end{aligned}
\end{equation}
where $C$ is a positive generic constant depending only on $d$, and upper
bounds of derivatives of $b$, $\sigma$, $f$ and $\varphi$.
\end{lem}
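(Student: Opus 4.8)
The plan is to start from the four error equations \eqref{a1}, \eqref{s3:e32z}, \eqref{s3:e218}, \eqref{s3:e3221} and estimate the two ``$Y$-type'' errors $e_Y^n$, $e_{\nabla Y}^n$ and the two ``$Z$-type'' errors $e_Z^n$, $e_{\nabla Z}^n$ separately, then combine. First I would treat \eqref{a1}: since $e_f^n = f_Y^n e_Y^n + f_Z^n e_Z^n$ (with bounded coefficients by $f\in C_b^{1,2,2,2}$), rearranging gives $(1-\tfrac12\Delta f_Y^n)e_Y^n = \mathbb{E}_{t_n}^{X^n}[e_Y^{n+1}] + \tfrac12\Delta f_Z^n e_Z^n + \tfrac12\Delta\mathbb{E}_{t_n}^{X^n}[e_f^{n+1}] + \sum_j R_{yj}^n$; for $\Delta$ small the prefactor is invertible with $|1-\tfrac12\Delta f_Y^n|^{-1}\le 1+C\Delta$, so $|e_Y^n|\le (1+C\Delta)|\mathbb{E}_{t_n}^{X^n}[e_Y^{n+1}]| + C\Delta(|e_Z^n| + \mathbb{E}_{t_n}^{X^n}[|e_Y^{n+1}|+|e_Z^{n+1}|]) + C\sum_j|R_{yj}^n|$. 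Squaring and using $(a+b)^2\le(1+\epsilon)a^2+(1+\epsilon^{-1})b^2$ with $\epsilon\sim\Delta$, together with $|\mathbb{E}_{t_n}^{X^n}[e_Y^{n+1}]|^2 = \mathbb{E}_{t_n}^{X^n}[|e_Y^{n+1}|^2] - Var^n(e_Y^{n+1})$, produces a bound of the form $|e_Y^n|^2 \le (1+C\Delta)\mathbb{E}_{t_n}^{X^n}[|e_Y^{n+1}|^2] - Var^n(e_Y^{n+1}) + C\Delta(|e_Z^n|^2 + \ldots) + C\sum_j|R_{yj}^n|^2/\Delta$. The identical manipulation applied to \eqref{s3:e218}, using boundedness of $\nabla_{x^n}X^{n+1}$ in the relevant $L^2$ sense (Proposition \ref{hyp2}, \eqref{phi}) and the structure of $e^n_{\nabla f}$ — whose ``new'' terms are $f_Y^n e_{\nabla Y}^n + f_Z^n e_{\nabla Z}^n$ plus terms controlled by $e_Y^n,e_Z^n$ times bounded factors like $\nabla_{x^n}Y_{t_n}^{t_n,X^n}$ — yields the analogous estimate for $|e_{\nabla Y}^n|^2$.

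Next I would handle the $Z$-errors. In \eqref{s3:e32z}, divide by $\Delta$: $e_Z^n = -\mathbb{E}_{t_n}^{X^n}[e_Z^{n+1}] + \tfrac{2}{\Delta}\mathbb{E}_{t_n}^{X^n}[e_Y^{n+1}\Delta W_n^\top] + \mathbb{E}_{t_n}^{X^n}[e_f^{n+1}\Delta W_n^\top] + \tfrac{2}{\Delta}\sum_j R_{zj}^n$. The delicate term is $\tfrac1\Delta\mathbb{E}_{t_n}^{X^n}[e_Y^{n+1}\Delta W_n^\top]$: one must NOT bound $e_Y^{n+1}$ in $L^2$ and pull $\Delta W_n$ out crudely (that loses a factor $\Delta^{-1/2}$). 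Instead, since $\mathbb{E}_{t_n}^{X^n}[\Delta W_n^\top]=0$, I replace $e_Y^{n+1}$ by $e_Y^{n+1} - \mathbb{E}_{t_n}^{X^n}[e_Y^{n+1}]$ and estimate $|\mathbb{E}_{t_n}^{X^n}[(e_Y^{n+1}-\mathbb{E}_{t_n}^{X^n}[e_Y^{n+1}])\Delta W_n^\top]|^2 \le \mathbb{E}_{t_n}^{X^n}[|\Delta W_n|^2]\cdot Var^n(e_Y^{n+1}) = d\Delta\, Var^n(e_Y^{n+1})$ by Cauchy–Schwarz; this gives $\tfrac1{\Delta^2}|\cdots|^2\le \tfrac{C}{\Delta}Var^n(e_Y^{n+1})$, which is exactly the $Var^n$ term one wants (it is absorbed by the negative $-Var^n(e_Y^{n+1})$ coming from the $Y$-estimate after multiplying by $\Delta$ and summing). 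The term $\mathbb{E}_{t_n}^{X^n}[e_f^{n+1}\Delta W_n^\top] = f_Y^{n+1}\mathbb{E}_{t_n}^{X^n}[(e_Y^{n+1}-\mathbb{E}_{t_n}[e_Y^{n+1}])\Delta W_n^\top]/1 + \ldots$ — wait, $f^{n+1}$ is evaluated at $t_{n+1}$ so it is not $\mathcal F_{t_n}$-measurable; here I would again center and use Cauchy–Schwarz to get contributions bounded by $\Delta\,\mathbb{E}_{t_n}^{X^n}[|e_Y^{n+1}|^2+|e_Z^{n+1}|^2]$. Thus $|e_Z^n|^2 \le C\mathbb{E}_{t_n}^{X^n}[|e_Z^{n+1}|^2] + \tfrac{C}{\Delta}Var^n(e_Y^{n+1}) + C\Delta\,\mathbb{E}_{t_n}^{X^n}[|e_Y^{n+1}|^2+|e_Z^{n+1}|^2] + \tfrac{C}{\Delta^2}\sum_j|R_{zj}^n|^2$. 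The same argument on \eqref{s3:e3221}, now carrying the bounded factor $\nabla_{x^n}X^{n+1}$ (using \eqref{phi} and \eqref{phi1} to control its fluctuation and its product with $\Delta W_n$), gives the matching bound for $|e_{\nabla Z}^n|^2$ with $Var^n(e_{\nabla Y}^{n+1})$ in place of $Var^n(e_Y^{n+1})$.

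Finally I would assemble: multiply the $|e_Y^n|^2$ and $|e_{\nabla Y}^n|^2$ estimates by $\Delta$, add them to the $|e_Z^n|^2$ and $|e_{\nabla Z}^n|^2$ estimates, and use smallness of $\Delta$ to absorb the $C\Delta|e_Z^n|^2$, $C\Delta|e_{\nabla Z}^n|^2$ terms appearing on the right into the left-hand side. The cross terms $R_{yj}^n R_{zj}^n$ etc. are split by Young's inequality with the weights dictated by the powers of $\Delta$ already present, producing exactly $\Delta|R_{yj}^n|^2 + \Delta|\nabla_{x^n}R_{yj}^n|^2 + \tfrac1\Delta|R_{zj}^n|^2 + \tfrac1\Delta|\nabla_{x^n}R_{zj}^n|^2$ (note the factor $\tfrac1{\Delta^2}$ from the $Z$-equations becomes $\tfrac1\Delta$ after the overall multiplication by $\Delta$ that makes the left side $\Delta(|e_Y^n|^2+|e_Z^n|^2+|e_{\nabla Y}^n|^2+|e_{\nabla Z}^n|^2)$). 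I expect the main obstacle to be the bookkeeping in the $\nabla$-equations: one must verify that every extra factor introduced by the variation — $\nabla_{x^n}X^{n+1}$, $\nabla_{x^n}Y_{t_n}^{t_n,X^n}$, $\nabla_{x^n}Z_{t_n}^{t_n,X^n}$, and the product $\Delta W_n\,\nabla_{x^n}X^{n+1}$ — is controlled in the appropriate conditional $L^2$ norm uniformly in $n$ and the partition, which is precisely what Proposition \ref{hyp2} and the a priori bounds \eqref{s2:e2}, \eqref{DY} on the variational processes are for, and to keep the centering trick (replacing $e_\bullet^{n+1}$ by $e_\bullet^{n+1}-\mathbb{E}_{t_n}^{X^n}[e_\bullet^{n+1}]$) consistent so that no uncancelled $\Delta^{-1/2}$ survives.
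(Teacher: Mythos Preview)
Your proposal gets the core mechanics right---centering $e_Y^{n+1}$ to turn $\mathbb{E}_{t_n}^{X^n}[e_Y^{n+1}\Delta W_n^\top]$ into a $Var^n(e_Y^{n+1})$ contribution, and using Proposition~\ref{hyp2} to control the extra factors $\nabla_{x^n}X^{n+1}$ in the variational equations---and would yield a bound of the claimed shape. But you are over-engineering the $Y$-type estimates and mis-stating how the variance terms behave. In this lemma there is \emph{no} cancellation of $Var^n(e_Y^{n+1})$: the statement simply has $+C\,Var^n(e_Y^{n+1})$ on the right-hand side. Your claim that the $\tfrac{C}{\Delta}Var^n(e_Y^{n+1})$ from the $Z$-estimate ``is absorbed by the negative $-Var^n(e_Y^{n+1})$ coming from the $Y$-estimate after multiplying by $\Delta$'' is wrong: after multiplying by $\Delta$ the $Y$-estimate contributes only $-\Delta\,Var^n(e_Y^{n+1})$, which cannot absorb $+C\,Var^n(e_Y^{n+1})$. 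That variance-splitting idea (via $|\mathbb{E}_{t_n}^{X^n}[e_Y^{n+1}]|^2=\mathbb{E}_{t_n}^{X^n}[|e_Y^{n+1}|^2]-Var^n(e_Y^{n+1})$ together with Young's inequality at scale $\epsilon\sim\Delta$) is exactly what the paper deploys later, in the proof of Theorem~\ref{thm1}, where one works over \emph{two} steps and genuinely needs coefficient control on the variance; it is not needed here.

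The paper's proof of this lemma is correspondingly simpler: it just squares each of the four error equations \eqref{a1}--\eqref{s3:e3221} using the crude inequality $(\sum_{i=1}^m a_i)^2\le m\sum_i a_i^2$, with no Young weights. This gives $|e_Y^n|^2\le C\,\mathbb{E}_{t_n}^{X^n}[|e_Y^{n+1}|^2]+C\Delta^2(\ldots)+C\sum_j|R_{yj}^n|^2$ directly, and hence $\Delta|e_Y^n|^2\le C\Delta\,\mathbb{E}_{t_n}^{X^n}[|e_Y^{n+1}|^2]+\ldots+C\Delta\sum_j|R_{yj}^n|^2$, matching the stated $\Delta|R_{yj}^n|^2$ exactly. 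Your Young-weighted version instead produces $\tfrac{C}{\Delta}\sum_j|R_{yj}^n|^2$ in $|e_Y^n|^2$, so after multiplying by $\Delta$ you would only reach $C\sum_j|R_{yj}^n|^2$ rather than $C\Delta\sum_j|R_{yj}^n|^2$; this is a slightly weaker bound than the lemma states (though harmless for the downstream application in Theorem~\ref{thm1}). The fix is simply to drop the $\epsilon\sim\Delta$ Young step in the $Y$- and $\nabla Y$-estimates and square crudely; keep the centering/Cauchy--Schwarz step only in the $Z$- and $\nabla Z$-estimates, which is where the $Var^n$ terms legitimately originate.
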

\begin{proof}
By \eqref{a1} and the Lipschitz continuity of function $f$, we easily deduce
\begin{equation*}\label{s3:e67}
|e_Y^{n}|\leq
(1+\frac{L'}{2}\Delta )\mathbb{E}_{t_{n}}^{X^{n}}[|e_Y^{n+1}|]
+\frac{L'}{2}\Delta\big(|e_Y^{n}|+|e_Z^{n}|
+\mathbb{E}_{t_{n}}^{X^{n}}[|e_Z^{n+1}|]\big)
+\sum\limits_{j=1}^2|R_{yj}^{n}|,
\end{equation*}
where $L'$ is the Lipschitz constant.
Then taking square on both sides of the above inequality and using the inequality
$\big(\sum\limits_{i=1}^ma_i\big)^2\leq m\sum\limits_{i=1}^ma_i^2$, we deduce
\begin{equation*}\label{s3:e671}\begin{aligned}
|e_Y^{n}|^2\leq &\;  C\mathbb{E}_{t_{n}}^{X^{n}}[|e_Y^{n+1}|^2]+C\Delta^2\big(|e_Y^{n}|^2+|e_Z^{n}|^2
+\mathbb{E}_{t_{n}}^{X^{n}}[|e_Z^{n+1}|^2]\big)
+C\sum\limits_{j=1}^2|R_{yj}^{n}|^2.
\end{aligned}\end{equation*}
Similarly, from the error equations \eqref{s3:e32z}--\eqref{s3:e3221} we obtain
\begin{equation*}
\begin{split}
\Delta |e_Z^{n}|^2 \leq\;&
C\Delta \mathbb{E}_{t_n}^{X^n}[|e_Z^{n+1}|^2]+CVar^{n}(e_Y^{n+1})\\&
+C\Delta^2\mathbb{E}_{t_n}^{X^n}[|e_Y^{n+1}|^2+|e_Z^{n+1}|^2]+\frac{C}{\Delta }\sum\limits_{j=1}^2|R_{zj}^{n}|^2,\\
\end{split}
\end{equation*}
\begin{equation*}
\begin{split}
|e_{\nabla Y}^n|^2
\leq \;& C\mathbb{E}_{t_n}^{X^n}[|e_{\nabla Y}^{n+1}|^2]
+C\Delta^2\big(|e_{\nabla Y}^{n}|^2+|e_{\nabla Z}^{n}|^2+|e_Y^{n}|^2+|e_Z^{n}|^2\big) \\&
+C\Delta^2\mathbb{E}_{t_n}^{X^n}[|e_{\nabla Y}^{n+1}|^2+|e_{\nabla Z}^{n+1}|^2+|e_Y^{n+1}|^2+|e_Z^{n+1}|^2]
+C\sum\limits_{j=1}^2|\nabla_{x^n} R_{yj}^n|^2,\\
\text{and}\qquad\qquad\quad\; & \\
\end{split}
\end{equation*}
\begin{equation*}
\begin{split}
\Delta |e_{\nabla Z}^n|^2
\leq \;& C\Delta \mathbb{E}_{t_n}^{X^n}[|e_{\nabla Z}^{n+1}|^2]+CVar^{n}(e_{\nabla Y}^{n+1})\\&
+C\Delta \mathbb{E}_{t_n}^{X^n}\big[|e_Y^{n+1}|^2+|e_Z^{n+1}|^2+|e_{\nabla Y}^{n+1}|^2+|e_{\nabla Z}^{n+1}|^2\big]
+\frac{C}{\Delta }\sum\limits_{j=1}^2|\nabla_{x^n} R_{zj}^n|^2.
\end{split}
\end{equation*}
Now combining the above four inequalities
yields
\begin{equation*}\label{ceq11}
\begin{aligned}
&\;\Delta \big(|e_Y^{n}|^2+|e_Z^{n}|^2+|e_{\nabla Y}^{n}|^2+|e_{\nabla Z}^{n}|^2\big)\\
\leq &\,
C\Delta \mathbb{E}_{t_n}^{X^n}\big[|e_Y^{n+1}|^2+|e_Z^{n+1}|^2+|e_{\nabla Y}^{n+1}|^2
+|e_{\nabla Z}^{n+1}|^2\big]+CVar^{n}(e_Y^{n+1})+CVar^{n}(e_{\nabla Y}^{n+1})\\
&+C\sum\limits_{j=1}^2\Big(\Delta |R_{yj}^{n}|^2
+\Delta|\nabla_{x^n}R_{yj}^{n}|^2+\frac{1}{\Delta }\left(|R_{zj}^{n}|^2
+|\nabla_{x^n}R_{zj}^n|^2\right)\Big).
\end{aligned}
\end{equation*}
The proof is completed.
\end{proof}

\subsection{Proof of Theorem \ref{thm1}}
Now, we give the proof of Theorem \ref{thm1}, which is divided into five steps.
In each step of the proof except the last step, we deduce an estimate for
$e_Y^n$, $e_Z^n$, $e_{\nabla Y}^n$, and $e_{\nabla Z}^n$ successively.

\begin{proof}
(1) {\bf The estimate of $e_{Y}^{n}$.}

By \eqref{a1}  we have
\begin{equation}\label{s3:e31b}
e_Y^{n+1}= \mathbb{E}_{t_{n+1}}^{X^{n+1}}[e_Y^{n+2}]
+\frac{1}{2}\Delta  \big(e_f^{n+1}+\mathbb{E}_{t_{n+1}}^{X^{n+1}}[e_f^{n+2}]\big)+\sum\limits_{j=1}^2R_{yj}^{n+1}.
\end{equation}
Inserting the $e_Y^{n+1}$ in \eqref{s3:e31b} into \eqref{a1} gives
\begin{equation}\label{eyn1}
\begin{aligned}
e_Y^n= &\mathbb{E}_{t_n}^{X^n}\big[\mathbb{E}_{t_{n+1}}^{X^{n+1}}[e_Y^{n+2}]\big]
+\frac{1}{2}\Delta  e_f^n+\Delta \mathbb{E}_{t_n}^{X^n}[e_f^{n+1}
+\frac{1}{2}e_f^{n+2}]+\sum\limits_{j=1}^2\big(R_{yj}^n
+\mathbb{E}_{t_n}^{X^n}[R_{yj}^{n+1}]\big).
\end{aligned}
\end{equation}
Then taking square on both sides of the above inequality
and using Young's inequality
$\ds (a+b)^2\leq (1+\gamma_1\Delta)a^2+(1+\frac{1}{\gamma_1\Delta })b^2$ (for any $\gamma_1>0$) yield
\begin{equation}\label{s3:e71}\begin{aligned}
|e_Y^n|^2 \leq &\; (1+\gamma_1\Delta)\mathbb{E}_{t_n}^{X^n}[|\mathbb{E}_{t_{n+1}}^{X^{n+1}}[e_Y^{n+2}]|^2]+\frac{C}{\gamma_1}(1+\gamma_1\Delta )\Bigg\{\Delta (|e_Y^n|^2+|e_Z^n|^2)\\
&\;+\Delta \mathbb{E}_{t_n}^{X^n}[|e_Y^{n+1}|^2+|e_Z^{n+1}|^2+|e_{Y}^{n+2}|^2+|e_{Z}^{n+2}|^2]
+\sum\limits_{j=1}^2\frac{|R_{yj}^n|^2+|\mathbb{E}_{t_n}^{X^n}[R_{yj}^{n+1}]|^2}{\Delta }\Bigg\}.
\end{aligned}\end{equation}

(2) {\bf The estimate of $e_{Z}^{n}$.}

Replacing the $n$ in  \eqref{s3:e32z} by $n+1$, we deduce
\begin{equation}\label{s3:eyn131}
- \mathbb{E}_{t_n}^{X^n}[e_Z^{n+1}]= \; \mathbb{E}_{t_{n}}^{X^{n}}[e_Z^{n+2}]
-\frac{2}{\Delta }\mathbb{E}_{t_{n}}^{X^{n}}[e_Y^{n+2}\Delta W_{n+1}^\top]
 -\mathbb{E}_{t_{n}}^{X^{n}}[e_f^{n+2}\Delta W_{n+1}^\top]
-\frac{2}{\Delta }\sum\limits_{j=1}^2\mathbb{E}_{t_n}^{X^n}[R_{zj}^{n+1}].
\end{equation}
Inserting $e_Y^{n+1}$ in \eqref{s3:e31b}
and $- \mathbb{E}_{t_n}^{X^n}[e_Z^{n+1}]$  in \eqref{s3:eyn131} into  \eqref{s3:e32z},
we deduce
\begin{equation}\label{231}
\begin{aligned}
e_Z^n=\;&
\mathbb{E}_{t_n}^{X^n}[e_Z^{n+2}]+\frac{2}{\Delta}\mathbb{E}_{t_n}^{X^n}[e_Y^{n+2}
\big(\Delta W_{n}^\top-\Delta W_{n+1}^\top\big)]\\&
+\mathbb{E}_{t_n}^{X^n}[e_f^{n+2}\Delta
W_{n}^\top]-\mathbb{E}_{t_n}^{X^n}[e_f^{n+2}\Delta W_{n+1}^\top]
+2\mathbb{E}_{t_n}^{X^n}[e_f^{n+1}\Delta W_{n}^\top]\\&+
 \sum\limits_{j=1}^2\frac{2}{\Delta}\Big(\mathbb{E}_{t_n}^{X^n}[R^{n+1}_{yj}\Delta W_{n}^\top]+R_{zj}^n-\mathbb{E}_{t_n}^{X^n}[R_{zj}^{n+1}]\Big).
\end{aligned}
\end{equation}
By the Malliavin integration-by-parts formula \eqref{Malp} and the chain rule \eqref{DY}, we have
\begin{equation}\label{5.12}\begin{aligned}&
\mathbb{E}_{t_n}^{X^n}[e_Y^{n+2}\big(\Delta W_{n}^\top-\Delta W_{n+1}^\top\big)]
=\mathbb{E}_{t_n}^{X^n}\Big[e_{\nabla Y}^{n+2}
\Big(\int_{t_n}^{t_{n+1}}D_tX^{n+2}dt-\int_{t_{n+1}}^{t_{n+2}}D_tX^{n+2}dt\Big)\Big],
\end{aligned}\end{equation}
where
$D_t X^{n+2}=\big[D_{1,t} X^{n+2},\ldots,D_{d,t} X^{n+2}\big]_{d\times d}$ is a $d\times d$ square matrix
 with $$D_{j, t}X^{n+2}=(D_{j, t} X_1^{n+2},\ldots,D_{j, t} X_d^{n+2})^\top \quad \textrm{for} \quad 1\leq j\leq d.$$
The weak order-2 It\^o-Taylor approximation solution $X^{n+2}$ can be represented as 
\begin{equation*}\label{Xn2}
\begin{aligned}
X^{n+2}
=\;&X^{n+1}+b(t_{n+1},X^{n+1})\Delta +\sigma(t_{n+1},X^{n+1})\Delta W_{n+1}\\
&+\sum_{\alpha\in \mathcal{A}_2}g_\alpha(t_{n+1},X^{n+1})I_{\alpha, n+1}.
\end{aligned}
\end{equation*}
Taking the Malliavin derivative $D_t$ to both sides of the above equation yields
\begin{equation}\label{2.35}
\int_{t_{n+1}}^{t_{n+2}}D_t X^{n+2}dt
=\sigma(t_{n+1},X^{n+1})\Delta +\sum_{\alpha\in \mathcal{A}_2}g_\alpha(t_{n+1},X^{n+1})\int_{t_{n+1}}^{t_{n+2}}D_tI_{\alpha,n+1}dt,
\end{equation}
and
\begin{equation}\label{2.36}
\begin{split}
\int_{t_n}^{t_{n+1}}D_tX^{n+2}dt
=\;&
\int_{t_n}^{t_{n+1}}D_tX^{n+1}dt+\sum_{\alpha\in \Gamma_2\backslash \{v\}}\int_{t_n}^{t_{n+1}}D_t\big\{g_\alpha(t_{n+1},X^{n+1})I_{\alpha, n+1}\big\}dt\\
=\;&
\sigma(t_n,X^n)\Delta +\sum_{\alpha\in \mathcal{A}_2}g_\alpha(t_{n},X^{n})\int_{t_{n}}^{t_{n+1}}D_tI_{\alpha, n}dt\\
\;&
+\sum_{\alpha\in \Gamma_2\backslash \{v\}}\int_{t_n}^{t_{n+1}}D_t\big\{g_\alpha(t_{n+1},X^{n+1})I_{\alpha, n+1}\big\}dt.
\end{split}
\end{equation}
By \eqref{5.12}, \eqref{2.35} and \eqref{2.36}, we deduce
\begin{equation}\label{5.15}
\begin{aligned}
&\mathbb{E}_{t_n}^{X^n}[e_Y^{n+2}\big(\Delta W_{n}^\top-\Delta W_{n+1}^\top\big)]\\
=\;&\mathbb{E}_{t_n}^{X^n}[e_{\nabla Y}^{n+2}\big(\sigma(t_n,X^n)-\sigma(t_{n+1},X^{n+1})\big)]\Delta
+\sum_{\alpha\in \mathcal{A}_2}\mathbb{E}_{t_n}^{X^n}[e_{\nabla Y}^{n+2}g_\alpha(t_{n},X^{n})\int_{t_{n}}^{t_{n+1}}D_tI_{\alpha, n}dt]
\\&
-\sum\limits_{\alpha\in \mathcal{A}_2}\mathbb{E}_{t_n}^{X^n}\big[e_{\nabla Y}^{n+2}g_\alpha(t_{n+1},X^{n+1})\int_{t_{n+1}}^{t_{n+2}}D_tI_{\alpha, n+1}dt\big]
\\&
+\sum\limits_{\alpha\in \Gamma_2\backslash \{v\}}\int_{t_n}^{t_{n+1}}\mathbb{E}_{t_n}^{X^n}\Big[e_{\nabla Y}^{n+2}D_t\big\{g_\alpha(t_{n+1},X^{n+1})I_{\alpha, n+1}\big\}\Big]dt.
\end{aligned}
\end{equation}
Also,
\begin{equation}\label{5.16}
\begin{aligned}
&\mathbb{E}_{t_n}^{X^n}[e_f^{n+1}\Delta W_{n}^\top]=\int_{t_n}^{t_{n+1}}\mathbb{E}_{t_n}^{X^n}[D_te_f^{n+1}]dt=\int_{t_n}^{t_{n+1}}\mathbb{E}_{t_n}^{X^n}[e_{\nabla f}^{n+1}D_tX^{n+1}]dt,\\
&\mathbb{E}_{t_n}^{X^n}[e_f^{n+2}\Delta W_{n}^\top]=\int_{t_n}^{t_{n+1}}\mathbb{E}_{t_n}^{X^n}[e_{\nabla f}^{n+2}D_tX^{n+2}]dt,\\
&\mathbb{E}_{t_n}^{X^n}[e_f^{n+2}\Delta W_{n+1}^\top]=\int_{t_{n+1}}^{t_{n+2}}\mathbb{E}_{t_n}^{X^n}[e_{\nabla f}^{n+2}D_tX^{n+2}]dt.
\end{aligned}
\end{equation}
Under the assumption $b,\sigma\in C_b^{1,2}$ and Proposition \ref{hyp2},  using the equalities in \eqref{5.15}, \eqref{5.16}  and the H\"older inequality, we have
\begin{equation}\label{5.17}
\begin{aligned}
&|\mathbb{E}_{t_n}^{X^n}[e_Y^{n+2}\big(\Delta W_{n}^\top-\Delta W_{n+1}^\top\big)]|^2\leq C\Delta^3\mathbb{E}_{t_n}^{X^n}\big[|e_{\nabla Y}^{n+2}|^2\big],\\
&|\mathbb{E}_{t_n}^{X^n}[e_f^{n+1}\Delta W_{n}^\top]|^2\leq C \Delta^2\mathbb{E}_{t_n}^{X^n}\big[|e_{Y}^{n+1}|^2+|e_{Z}^{n+1}|^2+|e_{\nabla Y}^{n+1}|^2+|e_{\nabla Z}^{n+1}|^2\big],\\
&|\mathbb{E}_{t_n}^{X^n}[e_f^{n+2}\Delta W_{n}^\top]|^2\leq C \Delta^2\mathbb{E}_{t_n}^{X^n}\big[|e_{Y}^{n+2}|^2+|e_{Z}^{n+2}|^2+|e_{\nabla Y}^{n+2}|^2+|e_{\nabla Z}^{n+2}|^2\big],\\
&|\mathbb{E}_{t_n}^{X^n}[e_f^{n+2}\Delta W_{n+1}^\top]|^2\leq C \Delta^2\mathbb{E}_{t_n}^{X^n}\big[|e_{Y}^{n+2}|^2+|e_{Z}^{n+2}|^2+|e_{\nabla Y}^{n+2}|^2+|e_{\nabla Z}^{n+2}|^2\big].
\end{aligned}\end{equation}
  Similarly, by taking square
on both sides of the equation \eqref{231},  and using the inequalities  in \eqref{5.17} and
the Young's inequality again,
we obtain
\begin{equation}\label{s3:e64}
\begin{aligned}
|e_Z^n|^2 \leq \;&
(1+\gamma_2\Delta)|\mathbb{E}_{t_n}^{X^n}[e_Z^{n+2}]|^2+\frac{C}{\gamma_2}
(1+\gamma_2\Delta )\Big\{\Delta\mathbb{E}_{t_n}^{X^n}
\big[|e_Y^{n+1}|^2+|e_Z^{n+1}|^2\\&+|e_{\nabla Y}^{n+1}|^2
+|e_{\nabla Z}^{n+1}|^2+|e_Y^{n+2}|^2+|e_Z^{n+2}|^2+|e_{\nabla Y}^{n+2}|^2
+|e_{\nabla Z}^{n+2}|^2\big]\\
& +\sum\limits_{j=1}^2\frac{|\mathbb{E}_{t_n}^{X^n}
[R_{yj}^{n+1}\Delta W_{n}^\top]|^2+|R_{zj}^n
-\mathbb{E}_{t_n}^{X^n}[R_{zj}^{n+1}]|^2}{\Delta ^3}\Big\}.
\end{aligned}
\end{equation}

(3) {\bf The estimate of $e_{\nabla Y}^{n}$.}

Taking variation on both sides of the equation \eqref{eyn1} gives
\begin{equation}\label{s3:e31}
\begin{aligned}
e_{\nabla Y}^n=& \; \mathbb{E}_{t_n}^{X^n}[e_{\nabla Y}^{n+2}\nabla_{x^n} X^{n+2}]
+\frac{1}{2}\Delta  e_{\nabla f}^n+\Delta \mathbb{E}_{t_n}^{X^n}[e_{\nabla f}^{n+1}\nabla_{x^n} X^{n+1}]\\&
+\frac{1}{2}\Delta \mathbb{E}_{t_n}^{X^n}[e_{\nabla f}^{n+2}\nabla_{x^n} X^{n+2}]+\sum\limits_{j=1}^2\Big(\nabla_{x^n}R_{yj}^n+\mathbb{E}_{t_n}^{X^n}[\nabla_{x^n}R_{yj}^{n+1}]\Big).
\end{aligned}
\end{equation}
Using the Taylor expansion to $\sigma(t_{n+1},X^{n+1})$, we have
\begin{equation*}
\sigma(t_{n+1},X^{n+1})=\sigma(t_{n+1},X^{n})+\int_0^1\sigma_x(t_{n+1},X^n+\lambda(X^{n+1}-X^n)) (X^{n+1}-X^n)\, d\lambda,
\end{equation*}
which combining the equation \eqref{Xn2} implies
\begin{equation}\label{s3:e31a1aa}\begin{array}{rl}
X^{n+2}
= X^{n}+\sigma(t_n, X^n)\Delta W_n+\sigma(t_{n+1},X^{n})\Delta W_{n+1}+\Lambda_n,
\end{array}\end{equation}
where
\begin{equation}\label{1}
\begin{aligned}
\Lambda_n=
&\;b(t_{n},X^{n})\Delta
+\sum\limits_{\alpha\in\mathcal{A}_2}g_\alpha(t_{n},X^{n}) I_{\alpha, n}\\&
+\,b(t_{n+1},X^{n+1})\Delta
+\sum\limits_{\alpha\in \mathcal{A}_2}g_\alpha(t_{n+1},X^{n+1})I_{\alpha, n+1}\\&
+\int_0^1\sigma_x(t_{n+1},X^n+\lambda(X^{n+1}-X^n)) (X^{n+1}-X^n)\Delta W_{n+1}\, d\lambda.
\end{aligned}
\end{equation}
Taking variation on both sides of the equation \eqref{s3:e31a1aa} gives
\begin{equation}\label{s3:e31a}\begin{aligned}
\nabla_{x^n} X^{n+2}
=& \quad I_{d\times d}+\sigma_x(t_{n},X^{n})\Delta W_{n}+\sigma_x(t_{n+1},X^{n})\Delta W_{n+1}+\nabla_{x^n}\Lambda_n.
\end{aligned}\end{equation}
Then,
\begin{equation}\label{s3525}
\begin{aligned}
\mathbb{E}_{t_n}^{X^n}[e_{\nabla Y}^{n+2}\nabla_{x^n} X^{n+2}]
=\;&\mathbb{E}_{t_n}^{X^n}[e_{\nabla Y}^{n+2}]+\sigma_x(t_{n},X^{n})\mathbb{E}_{t_n}^{X^n}[e_{\nabla Y}^{n+2}\Delta W_{n}^\top]\\&
+\sigma_x(t_{n+1},X^{n})\mathbb{E}_{t_n}^{X^n}[e_{\nabla Y}^{n+2}\Delta W_{n+1}^\top]+\mathbb{E}_{t_n}^{X^n}[e_{\nabla Y}^{n+2}\nabla_{x^n}\Lambda_n].
\end{aligned}
\end{equation}
By the H\"older inequality, under the assumption $b,\sigma\in C_b^{1,3}$,
we get
\begin{equation}\label{526}
\begin{aligned}
&\mathbb{E}_{t_n}^{X^n}[|\nabla_{X^n} \Lambda_n|^2]\leq C(1+|X^{n}|^{q})\Delta^2,\\
&\big|\sigma_x(t_{n},X^{n})\mathbb{E}_{t_n}^{X^n}
[e_{\nabla Y}^{n+2}\Delta W_{n}^\top]\big|^2
\leq C
\Delta Var^n(e_{\nabla Y}^{n+2}),  \\
&\big|\sigma_x(t_{n+1},X^{n})\mathbb{E}_{t_n}^{X^n}[e_{\nabla Y}^{n+2}
 \Delta W_{n+1}^\top]\big|^2\leq C\Delta Var^n(e_{\nabla Y}^{n+2}).
\end{aligned}
\end{equation}
Taking square on both sides of the equation \eqref{s3:e31} and using Young's inequality $\ds (a+b)^2\leq (1+\gamma_3\Delta )a^2+(1+\frac{1}{\gamma_3\Delta })b^2$ (for any $\gamma_3> 0$), we obtain
\begin{equation}\label{s3:349}\begin{aligned}
|e_{\nabla Y}^n|^2
\leq \; & (1+\gamma_3\Delta )|\mathbb{E}_{t_n}^{X^n}[e_{\nabla Y}^{n+2}]|^2
+\frac{C}{\gamma_3}(1+\gamma_3\Delta )\Big\{\Delta \big(|e_Y^{n}|^2+|e_Z^{n}|^2+|e_{\nabla Y}^{n}|^2+|e_{\nabla Z}^{n}|^2\big) \\&
+Var^n(e_{\nabla Y}^{n+2})
+\Delta \mathbb{E}_{t_n}^{X^n}\big[|e_Y^{n+1}|^2+|e_Z^{n+1}|^2+|e_{\nabla Y}^{n+1}|^2+|e_{\nabla Z}^{n+1}|^2+|e_Y^{n+2}|^2
\\&+|e_Z^{n+2}|^2+|e_{\nabla Y}^{n+2}|^2+|e_{\nabla Z}^{n+2}|^2\big]
+\sum\limits_{j=1}^2\frac{|\nabla_{x^n} R_{yj}^n|^2+|\mathbb{E}_{t_n}^{X^n}[\nabla_{x^{n}} R_{yj}^{n+1}]|^2}{\Delta }\Big\}.
\end{aligned}\end{equation}

(4) {\bf The estimate of $e_{\nabla Z}^{n}$.}

Taking variation on both sides of the equation \eqref{231} leads to
\begin{equation}\label{s3.57}\begin{aligned}
\Delta  e_{\nabla Z}^n
=&\;\Delta \mathbb{E}_{t_n}^{X^n}[e_{\nabla Z}^{n+2}\nabla_{x^n} X^{n+2}]
-2\mathbb{E}_{t_n}^{X^n}[\Delta W_{n+1}e_{\nabla Y}^{n+2}\nabla_{x^n} X^{n+2}]\\&
+2\mathbb{E}_{t_n}^{X^n}[\Delta W_{n}e_{\nabla Y}^{n+2}\nabla_{x^n} X^{n+2}]
+\Delta \mathbb{E}_{t_n}^{X^n}[\Delta W_{n} e_{\nabla f}^{n+2}\nabla_{x^n} X^{n+2}]\\&
-\Delta \mathbb{E}_{t_n}^{X^n}[\Delta W_{n+1} e_{\nabla f}^{n+2}\nabla_{x^n} X^{n+2}]
+2\Delta \mathbb{E}_{t_n}^{X^n}[\Delta W_{n} e_{\nabla f}^{n+1}\nabla_{x^n} X^{n+1}]\\
&+
2\sum\limits_{j=1}^2\big(\mathbb{E}_{t_n}^{X^n}[\Delta W_{n}\nabla_{x^n} R_{yj}^{n+1}]+\nabla_{x^n}R_{zj}^n-\mathbb{E}_{t_n}^{X^n}[\nabla_{x^n}R_{zj}^{n+1}]\big).
\end{aligned}\end{equation}
By the facts $\nabla_{x^n} X^{n+1}=I_{d\times d}+\sum\limits_{\alpha\in \Gamma_2 \backslash \{v\}}\partial_xg_\alpha(t_n, X^n)I_{\alpha, n}$ and
\begin{equation*}
\nabla_{x^n} X^{n+2}=\nabla_{x^{n+1}} X^{n+2}\nabla_{x^n} X^{n+1}=\Big(I_{d\times d}+\sum\limits_{\alpha\in \Gamma_2 \backslash \{v\}}\partial_xg_\alpha(t_{n+1}, X^{n+1})I_{\alpha,n+1}\Big)\nabla_{x^n} X^{n+1},
\end{equation*}
and using the H\"older inequality, we have the estimate
\begin{equation}\label{s3.58}\begin{aligned}\nonumber
&\,|\mathbb{E}_{t_n}^{X^n}[\Delta W_{n}e_{\nabla Y}^{n+2}\nabla_{x^n} X^{n+2}]|^2\\
\leq &\, 2|\mathbb{E}_{t_n}^{X^n}[\Delta W_{n}e_{\nabla Y}^{n+2}\nabla_{x^n} X^{n+1}]|^2\\&
+2\Big|\mathbb{E}_{t_n}^{X^n}\Big[\Delta W_{n}e_{\nabla Y}^{n+2}\sum\limits_{\alpha\in \Gamma_2 \backslash \{v\}}\partial_xg_\alpha(t_{n+1}, X^{n+1})I_{\alpha,n+1}
\nabla_{x^n} X^{n+1}\Big]\Big|^2\\
\leq &\, 2d\Delta Var^{n}(e_{\nabla Y}^{n+2}) +C\Delta^2\mathbb{E}_{t_n}^{X^n}[|e_{\nabla Y}^{n+2}|^2].
\end{aligned}\end{equation}
Similarly we have the estimates
\begin{equation}\label{s358}\begin{aligned}\nonumber
&|\mathbb{E}_{t_n}^{X^n}[\Delta W_{n+1}e_{\nabla Y}^{n+2}\nabla_{x^n} X^{n+2}]|^2
\leq 2d\Delta \mathbb{E}_{t_n}^{X^n}[Var^{n+1}(e_{\nabla Y}^{n+2})]
+C\Delta^2\mathbb{E}_{t_n}^{X^n}[|e_{\nabla Y}^{n+2}|^2],\\
&|\mathbb{E}_{t_n}^{X^n}[\Delta W_{n}e_{\nabla f}^{n+1}\nabla_{x^n} X^{n+2}]|^2
\leq C\Delta \mathbb{E}_{t_n}^{X^n}[|e_{Y}^{n+1}|^2+|e_Z^{n+1}|^2+|e_{\nabla Y}^{n+1}|^2+|e_{\nabla Z}^{n+1}|^2],\\
&|\mathbb{E}_{t_n}^{X^n}[\Delta W_{n}e_{\nabla f}^{n+2}\nabla_{x^n} X^{n+2}]|^2
\leq C\Delta \mathbb{E}_{t_n}^{X^n}[|e_{Y}^{n+2}|^2+|e_Z^{n+2}|^2+|e_{\nabla Y}^{n+2}|^2+|e_{\nabla Z}^{n+2}|^2],\\
&|\mathbb{E}_{t_n}^{X^n}[\Delta W_{n+1}e_{\nabla f}^{n+2}\nabla_{x^n} X^{n+2}]|^2
\leq C\Delta \mathbb{E}_{t_n}^{X^n}[|e_{Y}^{n+2}|^2+|e_Z^{n+2}|^2+|e_{\nabla Y}^{n+2}|^2
+|e_{\nabla Z}^{n+2}|^2].
\end{aligned}\end{equation}
Now by \eqref{s3.57}, the above five estimates, and using  $\big(\sum\limits_{i=1}^{m}a_i\big)^2\leq m\sum\limits_{i=1}^{m}a_i^2$, we deduce
\begin{equation}\label{s329}
\begin{aligned}
\Delta  |e_{\nabla Z}^n|^2
\leq \;& C\Delta |\mathbb{E}_{t_n}^{X^n}[e_{\nabla Z}^{n+2}\nabla_{x^n} X^{n+2}]|^2+
CVar^{n}(e_{\nabla Y}^{n+2})+C\mathbb{E}_{t_n}^{X^n}[Var^{n+1}(e_{\nabla Y}^{n+2})]\\&
+C\Delta \mathbb{E}_{t_n}^{X^n}[|e_{Y}^{n+2}|^2]
+C\Delta^2\mathbb{E}_{t_n}^{X^n}[|e_{Y}^{n+1}|^2+|e_Z^{n+1}|^2+|e_{\nabla Y}^{n+1}|^2+|e_{\nabla Z}^{n+1}|^2]\\&+C\Delta^2\mathbb{E}_{t_n}^{X^n}[
|e_{Y}^{n+2}|^2+|e_Z^{n+2}|^2+|e_{\nabla Y}^{n+2}|^2+|e_{\nabla Z}^{n+2}|^2]\\&
+\frac{C}{\Delta }\sum\limits_{j=1}^2\big(|\mathbb{E}_{t_n}^{X^n}[\Delta W_{n}\nabla_{x^n} R_{yj}^{n+1}]|^2+|\nabla_{x^n}R_{zj}^n|^2+|\mathbb{E}_{t_n}^{X^n}[\nabla_{x^n}R_{zj}^{n+1}]|^2\big).
\end{aligned}
\end{equation}
We remain to estimate the first term on the right side of \eqref{s329}.
Taking the variation $\nabla_{x^n}$ to $X^{n+2}$, which gives
\begin{equation*}
\begin{aligned}
\nabla_{x^n} X^{n+2}
=\,&
\nabla_{x^n}\Big[X^{n}+\sum\limits_{\alpha\in \Gamma_2\backslash \{v\}}g_\alpha(t_{n},X^{n})I_{\alpha, n}+\sum\limits_{\alpha \in\Gamma_2\backslash \{v\}}g_\alpha(t_{n+1},X^{n+1})I_{\alpha, n+1}\Big]\\
=\,&
I_{d\times d}+\sum\limits_{\alpha\in \Gamma_2\backslash \{v\}}\partial_x g_\alpha(t_{n},X^{n})I_{\alpha, n}+\sum\limits_{\alpha\in \Gamma_2 \backslash \{v\}}\partial_x g_\alpha(t_{n+1},X^{n+1})\nabla_{x^n} X^{n+1}I_{\alpha, n+1},
\end{aligned}
\end{equation*}
and using the inequality $\big(\sum\limits_{i=1}^{m}a_i\big)^2\leq m\sum\limits_{i=1}^ma_i^2$,
we get
\begin{equation}\label{s331}
\begin{aligned}
|\mathbb{E}_{t_n}^{X^n}[e_{\nabla Z}^{n+2}\nabla_{x^n} X^{n+2}]|^2
\leq \;& 3|\mathbb{E}_{t_n}^{X^n}[e_{\nabla Z}^{n+2}]|^2+C\Delta \mathbb{E}_{t_n}^{X^n}[|e_{\nabla Z}^{n+2}|^2].
\end{aligned}
\end{equation}
Using the integration-by-parts formula of Malliavin calculus \eqref{Malp}  we obtain
\begin{equation}\label{ez2W}
\begin{aligned}\hspace*{1.5cm}
\mathbb{E}_{t_{n+1}}^{X^{n+1}}\big[e_Z^{i,n+2}\Delta W_{n+1}^\top\big]
=
&\,\mathbb{E}_{t_{n+1}}^{X^{n+1}}\big[e_{\nabla Z}^{i,n+2}\big]\sigma(t_{n+1},X^{n+1})\Delta
\\&
+\sum\limits_{\alpha\in \mathcal{A}_2 }\mathbb{E}_{t_{n+1}}^{X^{n+1}}\big[e_{\nabla Z}^{i,n+2}g_\alpha(t_{n+1},X^{n+1})\int_{t_{n+1}}^{t_{n+2}} D_tI_{\alpha, n+1}dt\big].
\end{aligned}
\end{equation}
By the definition of the norm $|\cdot|$, we have
\begin{equation*}
|e_{\nabla Z}^{n+2}|^2 =\textrm{trace}\big([e_{\nabla Z}^{n+2}]^\top e_{\nabla Z}^{n+2}\big)=\sum\limits_{i,j=1}^d|e_{\nabla Z}^{i,j,n+2}|^2=\sum\limits_{i=1}^de_{\nabla Z}^{i,n+2}[e_{\nabla Z}^{i,n+2}]^\top,
\end{equation*}
where $e_{\nabla Z}^{n+2}:=[e_{\nabla Z}^{i,j,n+2}]_{d\times d}$, and $e_{\nabla Z}^{i,n+2}:=\big(e_{\nabla Z}^{i,1,n+2},\ldots,e_{\nabla Z}^{i,d,n+2}\big)_{1\times d }$ is the $i$-th row vector of $e_{\nabla Z}^{n+2}$.
The uniformly elliptic condition
\begin{equation*}
 \sigma(t_{n+1},X^{n+1})\sigma^\top(t_{n+1},X^{n+1})\geq \frac{1}{K'}I_{d\times d}
\end{equation*}
in Assumption \ref{hyp1} implies that $\sigma(t_{n+1},X^{n+1})\sigma^\top(t_{n+1},X^{n+1})-\frac{1}{K'}I_{d\times d}$ is a positive semi-definite matrix, that is
\begin{equation*}\label{2.46}
\mathbb{E}_{t_{n+1}}^{X^{n+1}}[e_{\nabla Z}^{i,n+2}]
\big(\sigma(t_{n+1},X^{n+1})\sigma^\top(t_{n+1},X^{n+1})-\frac{1}{K'}I_{d\times d} \big)
\mathbb{E}_{t_{n+1}}^{X^{n+1}}[e_{\nabla Z}^{i,n+2}]^\top\geq 0,
\end{equation*}
which yields
\begin{equation}\label{2.52}
\begin{aligned}
&\sum\limits_{i=1}^d |\mathbb{E}_{t_{n+1}}^{X^{n+1}}[e_{\nabla Z}^{i,n+2}]\sigma(t_{n+1},X^{n+1})|^2\\
=\;& \sum\limits_{i=1}^d \mathbb{E}_{t_{n+1}}^{X^{n+1}}
[e_{\nabla Z}^{i,n+2}]\sigma(t_{n+1},X^{n+1})\sigma^\top(t_{n+1},X^{n+1})
\mathbb{E}_{t_{n+1}}^{X^{n+1}}[e_{\nabla Z}^{i,n+2}]^\top\\
\geq \;& \frac{1}{K'}\sum\limits_{i=1}^d \mathbb{E}_{t_{n+1}}^{X^{n+1}}[e_{\nabla Z}^{i,n+2}]
\mathbb{E}_{t_{n+1}}^{X^{n+1}}[e_{\nabla Z}^{i,n+2}]^\top
=\frac{1}{K'}|\mathbb{E}_{t_{n+1}}^{X^{n+1}}\big[e_{\nabla Z}^{n+2}]|^2.
\end{aligned}
\end{equation}
Thanks to the Cauchy-Schwarz inequality, it holds that
\begin{equation}\label{2.53}
  \sum\limits_{i=1}^d|\mathbb{E}_{t_{n+1}}^{X^{n+1}}[e_Z^{i,n+2}\Delta W_{n+1}^\top]|^2\leq d\Delta \sum\limits_{i=1}^d Var^{n+1}(e_Z^{i,n+2})=d\Delta  Var^{n+1}(e_Z^{n+2}).
\end{equation}
Now, using \eqref{ez2W}, \eqref{2.52} and \eqref{2.53} we deduce
\begin{equation*}\label{2.54}\begin{aligned}
 \frac{1}{K'}\Delta^2|\mathbb{E}_{t_{n+1}}^{X^{n+1}}[e_{\nabla Z}^{n+2}]|^2
  \leq \;& 2\sum\limits_{i=1}^d\big|\mathbb{E}_{t_{n+1}}^{X^{n+1}}[e_Z^{i,n+2}\Delta W_{n+1}^\top]\big|^2\\&
 +2\sum\limits_{i=1}^d\sum\limits_{\alpha\in \mathcal{A}_2}\big|\mathbb{E}_{t_{n+1}}^{X^{n+1}}[e_{\nabla Z}^{i,n+2} g_\alpha(t_{n+1},X^{n+1})\int_{t_{n+1}}^{t_{n+2}} D_tI_{\alpha, n+1}dt]\big|^2\\
  \leq \;& 2d\Delta Var^{n+1}(e_{Z}^{n+2})+C\Delta^3 \mathbb{E}_{t_{n+1}}^{X^{n+1}}[|e_{\nabla Z}^{n+2}|^2] ,
\end{aligned}\end{equation*}
which implies
\begin{equation}\label{3.52a}
\Delta |\mathbb{E}_{t_{n+1}}^{X^{n+1}}[e_{\nabla Z}^{n+2}]|^2
\leq  2K'd Var^{n+1}(e_Z^{n+2})+C\Delta^2\mathbb{E}_{t_{n+1}}^{X^{n+1}}[|e_{\nabla Z}^{n+2}|^2].
\end{equation}
By the inequality $$|\mathbb{E}_{t_n}^{X^n}[e_{\nabla Z}^{n+2}]|^2
=|\mathbb{E}_{t_n}^{X^n}[\mathbb{E}_{t_{n+1}}^{X^{n+1}}[e_{\nabla Z}^{n+2}]]|^2
\leq \mathbb{E}_{t_n}^{X^n}[|\mathbb{E}_{t_{n+1}}^{X^{n+1}}[e_{\nabla Z}^{n+2}]|^2],$$
\eqref{s329} and \eqref{3.52a}, we obtain
\begin{equation}\label{s329a}
\begin{aligned}
\Delta  |e_{\nabla Z}^n|^2
\leq \,&
C\,\mathbb{E}_{t_n}^{X^n}[Var^{n+1}(e_Z^{n+2})]+
CVar^{n}(e_{\nabla Y}^{n+2})
+C\mathbb{E}_{t_n}^{X^n}[Var^{n+1}(e_{\nabla Y}^{n+2})]\\
&
+\,C\Delta \mathbb{E}_{t_n}^{X^n}[|e_{Y}^{n+1}|^2]
+\,C\Delta^2\mathbb{E}_{t_n}^{X^n}[
|e_{Y}^{n+1}|^2+|e_Z^{n+1}|^2+|e_{\nabla Y}^{n+1}|^2+|e_{\nabla Z}^{n+1}|^2]\\
&
+C\Delta^2\mathbb{E}_{t_n}^{X^n}[
|e_{Y}^{n+2}|^2+|e_Z^{n+2}|^2+|e_{\nabla Y}^{n+2}|^2+|e_{\nabla Z}^{n+2}|^2]
+\sum\limits_{j=1}^2\frac{C}{\Delta }\big(|\mathbb{E}_{t_n}^{X^n}[\Delta W_{n}\nabla_{x^n} R^{n+1}_{yj} ]|^2\\
&
+|\nabla_{x^n}R_{zj}^n|^2+|\mathbb{E}_{t_n}^{X^n}[\nabla_{x^n}R_{zj}^{n+1}]|^2\big).\\
\end{aligned}
\end{equation}

(5) {\bf The estimate \eqref{eq:thm1} in the theorem.}

Combining the inequalities \eqref{s3:e71}, \eqref{s3:e64},
\eqref{s3:349} and \eqref{s329a}, and applying Lemma \ref{th1}, we deduce
\begin{equation}\label{ch123}
\begin{array}{rll}
&\!\!\!\!
|e_Y^n|^2+|e_Z^n|^2+|e_{\nabla Y}^n|^2+\frac{1}{4C}\Delta |e_{\nabla Z}^n|^2\\
\leq & \!\!\!\!\!
(1+C\Delta)\mathbb{E}_{t_n}^{X^n}
\big[|\mathbb{E}_{t_{n+1}}^{X^{n+1}}[e_Y^{n+2}]|^2
+|\mathbb{E}_{t_{n+1}}^{X^{n+1}}[e_Z^{n+2}]|^2
+|\mathbb{E}_{t_{n}}^{X^{n}}[e_{\nabla Y}^{n+2}]|^2
\big]\\
&\!\!\!\!
+\Big(\frac{C}{\gamma_1}+\frac{C}{\gamma_2}+\frac{C}{\gamma_3}\Big)
\mathbb{E}_{t_n}^{X^n}[Var^{n+1}(e_{Y}^{n+2})]
+\frac{1}{4}\mathbb{E}_{t_n}^{X^n}[Var^{n+1}(e_Z^{n+2})]\\
&\!\!\!\!
+\big(\frac{1}{4}+C(\frac{1}{\gamma_3}+\Delta)\big)Var^{n}(e_{\nabla Y}^{n+2})
+\Big(\frac{C}{\gamma_2}+\frac{C}{\gamma_3}+\frac{1}{4}\Big)
 \mathbb{E}_{t_n}^{X^n}[Var^{n+1}(e_{\nabla Y}^{n+2})]\\
&\!\!\!\!
+C\Delta \big(|e_Y^n|^2+|e_Z^n|^2+|e_{\nabla Y}^n|^2+|e_{\nabla Z}^n|^2\big)
+C\Delta\mathbb{E}_{t_n}^{X^n}\big[|e_Y^{n+2}|^2+|e_Z^{n+2}|^2\\
&\!\!\!\!
+|e_{\nabla Y}^{n+2}|^2+|e_{\nabla Z}^{n+2}|^2\big]
+C\sum\limits_{j=1}^2\frac{1}{\Delta^3}
\Big\{|\mathbb{E}_{t_n}^{X^n}[R_{yj}^{n+1}\Delta W_{n}^\top]|^2+|R_{zj}^n-\mathbb{E}_{t_n}^{X^n}[R_{zj}^{n+1}]|^2\Big\}\\
&\!\!\!\!
+C\sum\limits_{j=1}^2\frac{1}{\Delta}
\Big\{|\mathbb{E}_{t_n}^{X^n}[R_{yj}^{n+1}]|^2
+|\mathbb{E}_{t_n}^{X^n}[\nabla_{x^n} R_{yj}^{n+1}]|^2
+|\mathbb{E}_{t_n}^{X^n}[\nabla_{x^n} R_{zj}^{n+1}]|^2\\& \qquad \qquad
+|\mathbb{E}_{t_n}^{X^n}[\Delta W_{n}\nabla_{x^n}R_{yj}^{n+1}]|^2+|R_{yj}^n|^2+|\nabla_{x^n} R_{yj}^{n}|^2+|\nabla_{x^n} R_{zj}^{n}|^2\\& \qquad \qquad
+\mathbb{E}_{t_n}^{X^n}[|R_{zj}^{n+1}|^2]
+\mathbb{E}_{t_n}^{X^n}[|\nabla_{x^{n+1}}R_{zj}^{n+1}|^2]
\Big\}.
\end{array}
\end{equation}
Notice that
\begin{equation*}\label{s3:e70}
\begin{aligned}
&|\mathbb{E}_{t_{n+1}}^{X^{n+1}}[e_Y^{n+2}]|^2+\frac{3}{8}Var^{n+1}(e_Y^{n+2})
=\frac{5}{8}|\mathbb{E}_{t_{n+1}}^{X^{n+1}}[e_Y^{n+2}]|^2+\frac{3}{8}\mathbb{E}_{t_{n+1}}^{X^{n+1}}[|e_Y^{n+2}|^2]\leq \mathbb{E}_{t_{n+1}}^{X^{n+1}}[|e_Y^{n+2}|^2],\\
&|\mathbb{E}_{t_{n+1}}^{X^{n+1}}[e_{Z}^{n+2}]|^2
 +\frac{1}{4}Var^{n+1}(e_{Z}^{n+2})
=\frac{3}{4}|\mathbb{E}_{t_{n+1}}^{X^{n+1}}[e_{Z}^{n+2}]|^2
 +\frac{1}{4}\mathbb{E}_{t_{n+1}}^{X^{n+1}}[|e_{Z}^{n+2}|^2]
\leq\mathbb{E}_{t_{n+1}}^{X^{n+1}}[|e_{Z}^{n+2}|^2],\\
&
|\mathbb{E}_{t_{n}}^{X^{n}}[e_{\nabla Y}^{n+2}]|^2
+(\frac{1}{4}+\varsigma)Var^{n}(e_{\nabla Y}^{n+2})
+\frac{1}{2}\mathbb{E}_{t_n}^{X^n}[Var^{n+1}(e_{\nabla Y}^{n+2})]
\\&\!\!
\leq (\frac{1}{4}+\varsigma)|\mathbb{E}_{t_{n}}^{X^{n}}[e_{\nabla Y}^{n+2}]|^2
+(\frac{3}{4}-\varsigma)\mathbb{E}_{t_{n}}^{X^{n}}[|\mathbb{E}_{t_{n+1}}^{X^{n+1}}
[e_{\nabla Y}^{n+2}]|^2]\\&
\qquad+(\frac{1}{4}+\varsigma)Var^{n}(e_{\nabla Y}^{n+2})
+\frac{1}{2}\mathbb{E}_{t_n}^{X^n}[Var^{n+1}(e_{\nabla Y}^{n+2})]\\
&\!\!
\leq \big((\frac{1}{4}+\varsigma)+\frac{1}{2}\big)\mathbb{E}_{t_{n}}^{X^{n}}[|e_{\nabla Y}^{n+2}|^2]
+(\frac{1}{4}-\varsigma)\mathbb{E}_{t_{n}}^{X^{n}}[|\mathbb{E}_{t_{n+1}}^{X^{n+1}}
[e_{\nabla Y}^{n+2}]|^2]
\leq \mathbb{E}_{t_{n}}^{X^{n}}[|e_{\nabla Y}^{n+2}|^2],
\quad 0< \varsigma< \frac{1}{4}.
\end{aligned}
\end{equation*}
Now let $\gamma_1=\gamma_2=\gamma_3=8C$ and  $\Delta_{0}$ be sufficient small
such that $0<C(\frac{1}{\gamma_{3}}+\Delta)<\frac{1}{4}$ for $\Delta<\Delta_{0}$.
Then, by inequality \eqref{ch123}, we deduce
\begin{equation}\label{com-eq}
\begin{array}{rll}\hspace*{1cm}
&\!\!\!\!
|e_Y^n|^2+|e_Z^n|^2+|e_{\nabla Y}^n|^2+\frac{1}{4C}\Delta |e_{\nabla Z}^n|^2\\
\leq &\!\!\!\!\!
\frac{1+C\Delta }{1-C\Delta }\mathbb{E}_{t_n}^{X^n}\big[|e_Y^{n+2}|^2+|e_Z^{n+2}|^2+|e_{\nabla Y}^{n+2}|^2
+\frac{1}{4C}\Delta |e_{\nabla Z}^{n+2}|^2\big]\\
&\!\!\!\!
+C\sum\limits_{j=1}^2\frac{1}{\Delta^3}
\Big\{|\mathbb{E}_{t_n}^{X^n}[R_{yj}^{n+1}\Delta W_{n}^\top]|^2+|R_{zj}^n-\mathbb{E}_{t_n}^{X^n}[R_{zj}^{n+1}]|^2\Big\}\\
&\!\!\!\!
+C\sum\limits_{j=1}^2\frac{1}{\Delta}
\Big\{|\mathbb{E}_{t_n}^{X^n}[R_{yj}^{n+1}]|^2
+|\mathbb{E}_{t_n}^{X^n}[\nabla_{x^n} R_{yj}^{n+1}]|^2
+|\mathbb{E}_{t_n}^{X^n}[\nabla_{x^n} R_{zj}^{n+1}]|^2\\& \qquad \qquad
+|\mathbb{E}_{t_n}^{X^n}[\Delta W_{n}\nabla_{x^n}R_{yj}^{n+1}]|^2+|R_{yj}^n|^2+|\nabla_{x^n} R_{yj}^{n}|^2+|\nabla_{x^n} R_{zj}^{n}|^2\\& \qquad \qquad
+\mathbb{E}_{t_n}^{X^n}[|R_{zj}^{n+1}|^2]
+\mathbb{E}_{t_n}^{X^n}[|\nabla_{x^{n+1}}R_{zj}^{n+1}|^2]
\Big\}\\
\leq & \!\!\!\!\!
e^{CT}\mathbb{E}_{t_n}^{X^n}[|e_Y^{N-1}|^2+|e_Z^{N-1}|^2+|e_{\nabla Y}^{N-1}|^2+\frac{1}{4C}\Delta |e_{\nabla Z}^{N-1}|^2]\\
&\!\!\!\!
+C\sum\limits_{i=n}^{N-2}\sum\limits_{j=1}^2\Big\{\frac{1}{\Delta^3}
\Big(|\mathbb{E}_{t_i}^{X^i}[R_{yj}^{i+1}\Delta W_{i}^\top]|^2+|R_{zj}^i-\mathbb{E}_{t_i}^{X^i}[R_{zj}^{i+1}]|^2\Big)\\
&\!\!\!\!
\hspace{2cm}+\frac{1}{\Delta}
\Big(|\mathbb{E}_{t_i}^{X^i}[R_{yj}^{i+1}]|^2
+|\mathbb{E}_{t_i}^{X^i}[\nabla_{x^i} R_{yj}^{i+1}]|^2
+|\mathbb{E}_{t_i}^{X^i}[\nabla_{x^i} R_{zj}^{i+1}]|^2\\& \hspace{2.5cm}
+|\mathbb{E}_{t_i}^{X^i}[\Delta W_{i}\nabla_{x^i}R_{yj}^{i+1}]|^2+|R_{yj}^i|^2+|\nabla_{x^i} R_{yj}^{i}|^2+|\nabla_{x^i} R_{zj}^{i}|^2\\& \hspace{2.5cm}
+\mathbb{E}_{t_i}^{X^i}[|R_{zj}^{i+1}|^2]
+\mathbb{E}_{t_i}^{X^i}[|\nabla_{x^{i+1}}R_{zj}^{i+1}|^2]
\Big)\Big\}.
\end{array}
\end{equation}
And then by taking the mathematical expectation $\mathbb{E}[\cdot]$ on both sides of \eqref{com-eq},
we complete the proof.
\end{proof}

\begin{rem}
The remainder terms include three types:
(1) the truncation error terms, e.g., $R_{y1}^i$ and $\nabla_{x^i} R_{y1}^i$;
(2) the discretization errors caused by the discretization of SDE, e.g., $R_{y2}^i$ and $\nabla_{x^i} R_{y2}^i$;
(3) the error terms in (2) multiplied by $\Delta W_i^\top$
(e.g., $\mathbb{E}_{t_i}^{X^i}[R_{yj}^{i+1}\Delta W_{i}^\top]$).
Under certain regularity conditions on the data $b$, $\sigma$, $f$ and $\varphi$,
by the It\^o-Taylor and Taylor expansion, and the Malliavin calculus,
we can obtain the estimates of these remainder terms
(which are proved in detail in Section \ref{Sec:pf42}).
Subsequently,
it is easy to get error estimates for Scheme \ref{sch1} by Theorem \ref{thm1}.
\end{rem}

\subsection{Proof of Theorem \ref{thm2}}\label{Sec:pf42}

We consider the case that the generator $f$ of FBSDEs \eqref{DFBSDEs1} is a deterministic function.

\subsubsection{Useful lemmas}
In this subsection, we introduce some lemmas
which will be used in the proof of Theorem \ref{thm2}.
They may also be very useful in error analysis
for other numerical methods for solving FBSDEs.

\begin{lem}\label{lem51}
For $X^{n+1}=\sum\limits_{\alpha\in \Gamma_2}g_\alpha(t_n,X^n)I_{\alpha, n}$, if
$b,\sigma\in C_b^{2,5}$ and $H\in C_b^{5}$,
then under Hypothesis \ref{hyp0}, for $1\leq n\leq N-2$,
there exists a positive integer $q$ such that
\begin{equation}\label{lem35a}
\big|\nabla_{x^n}\mathbb{E}_{t_n}^{X^n}\big[H(X_{t_{n+1}}^{t_n,X^n})- H(X^{n+1})\big]\big|\leq  C(1+|X^n|^q)\Delta^3,
\end{equation}
where $C$ is a positive constant depending on $K$, and upper bounds of
the derivatives of $b$, $\sigma$ and $H$.
\end{lem}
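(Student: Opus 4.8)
The plan is to expand both $\mathbb{E}_{t_n}^{X^n}[H(X_{t_{n+1}}^{t_n,X^n})]$ and $\mathbb{E}_{t_n}^{X^n}[H(X^{n+1})]$ in powers of $\Delta$ using the It\^o--Taylor expansion, and to exploit the fact that the weak order-2 scheme \eqref{Xnn10} is \emph{constructed} so that these two expansions agree through order $\Delta^2$; the remainder is then $O(\Delta^3)$, and the point of the lemma is that this survives one differentiation in $x^n$. Concretely, I would first write, via the It\^o--Taylor formula for the diffusion $X_s^{t_n,x}$,
\begin{equation*}
\mathbb{E}_{t_n}^{x}[H(X_{t_{n+1}}^{t_n,x})]
=\sum_{\alpha\in\Gamma_2} L^{\alpha-}\cdots\, I_\alpha[\cdot]\text{-type terms} + \rho(x,\Delta),
\end{equation*}
i.e. $H(x)+L^0H(x)\Delta+\tfrac12 (L^0)^2H(x)\Delta^2 + \rho(x,\Delta)$, where $\rho$ is the weak-order-2 remainder and satisfies a bound of the form $|\partial_x^{k}\rho(x,\Delta)|\le C(1+|x|^{q})\Delta^3$ for $k\le 1$ under the smoothness $b,\sigma\in C_b^{2,5}$, $H\in C_b^5$ (this is the differentiated version of Proposition 5.11.1 in \cite{KP92}; the extra derivatives in the hypotheses are exactly what is needed to differentiate the remainder once and still control it). Second, I would Taylor-expand $H(X^{n+1})=H(X^n+\phi^n)$ in $\phi^n$ and take the conditional expectation, using the explicit moments of the multiple It\^o integrals $I_{\alpha,n}$ (e.g. $\mathbb{E}[I_{(j_1),n}]=0$, $\mathbb{E}[I_{(j_1,j_2),n}]=\tfrac12\delta_{j_1j_2}\Delta$, etc.) to obtain the same polynomial $H(x)+L^0H(x)\Delta+\tfrac12(L^0)^2H(x)\Delta^2$ plus a remainder $\tilde\rho(x,\Delta)$ with the analogous bound $|\partial_x^k\tilde\rho|\le C(1+|x|^q)\Delta^3$.

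The key cancellation step is then simply that the difference $\mathbb{E}_{t_n}^{x}[H(X_{t_{n+1}}^{t_n,x})-H(X^{n+1})]=\rho(x,\Delta)-\tilde\rho(x,\Delta)=:G(x,\Delta)$, where the leading $\Delta^0,\Delta^1,\Delta^2$ terms have been arranged to vanish identically (this is precisely the consistency/weak-order-2 property already recorded in \eqref{weak-beta}), so $G$ itself is $O(\Delta^3)$ with the stated polynomial growth. The lemma asserts the bound on $\nabla_{x^n}G(X^n,\Delta)$ rather than on $G(X^n,\Delta)$; since $X^n$ enters $G$ both as the base point $x$ and (implicitly, through the remainder structure) with no other dependence, $\nabla_{x^n}G(X^n,\Delta)=(\partial_x G)(X^n,\Delta)$, and the required estimate follows from the $k=1$ derivative bounds on $\rho$ and $\tilde\rho$ established above, taking $q$ to be the (finite) exponent produced by those bounds.

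The main obstacle is the first part: obtaining the \emph{differentiated} remainder estimate for the It\^o--Taylor expansion of $\mathbb{E}_{t_n}^{x}[H(X_{t_{n+1}}^{t_n,x})]$, i.e. showing $|\partial_x\rho(x,\Delta)|\le C(1+|x|^q)\Delta^3$. This requires interchanging the $x$-derivative with the expectation and the iterated integrals, controlling $\partial_x$ of the It\^o coefficient functions $g_\alpha$ and of $X_s^{t_n,x}$ (which brings in the variational process $\nabla_x X_s^{t_n,x}$ from \eqref{s2:e1}), and invoking the moment bound \eqref{est_Xs} together with $C_b$-bounds on the relevant derivatives to absorb everything into the factor $C(1+|x|^q)$. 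Once that technical estimate is in hand, the matching expansion of $\mathbb{E}_{t_n}^{x}[H(X^{n+1})]$ is a routine (if lengthy) computation with Gaussian moments of the $I_{\alpha,n}$, and the conclusion is immediate. I would therefore isolate the differentiated weak-order-2 remainder bound as the core sublemma and spend the bulk of the effort there, citing \cite{KP92} for the scalar case and supplying the differentiation argument.
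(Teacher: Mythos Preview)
Your approach is conceptually sound but does not fit the stated hypothesis $H\in C_b^{5}$. Writing the weak expansion $\mathbb{E}_{t_n}^{x}[H(X_{t_{n+1}}^{t_n,x})]=H(x)+L^0H(x)\Delta+\tfrac12 (L^0)^2H(x)\Delta^2 + \rho(x,\Delta)$ forces the remainder $\rho$ to be governed by $(L^0)^3H$; since $L^0$ is a second-order operator in $x$, merely forming $(L^0)^3H$ already requires $H\in C_b^6$, and bounding $\partial_x\rho$ then needs $H\in C_b^7$. The same happens on the discrete side: to make the Taylor remainder $\tilde\rho$ for $\mathbb{E}_{t_n}^{x}[H(x+\phi^n)]$ of size $\Delta^3$ you must expand through fifth order and control the sixth-order integral remainder (because $\mathbb{E}[|\phi^n|^6]\sim\Delta^3$ while $\mathbb{E}[|\phi^n|^5]\sim\Delta^{5/2}$), again costing one extra derivative of $H$ before you differentiate in $x$. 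So your claim that ``the extra derivatives in the hypotheses are exactly what is needed'' is off by two derivatives of $H$, and the argument as written proves only a weaker statement than the lemma.

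The paper sidesteps this by never expanding $H\circ X$. It writes, via the mean-value theorem,
\[
H(X_{t_{n+1}}^{t_n,X^n})-H(X^{n+1})=\sum_{i=1}^d h_n^i\, F_{x_i}^{X^n},\qquad
F_{x_i}^{X^n}=\int_0^1 H_{x_i}'\bigl(X^{n+1}+\lambda(X_{t_{n+1}}^{t_n,X^n}-X^{n+1})\bigr)\,d\lambda,
\]
where $h_n^i=\sum_{\alpha\in\mathcal A_3}I_\alpha[g_\alpha^i(\cdot,X_\cdot^{t_n,X^n})]_{t_n,t_{n+1}}$ is the $i$-th component of the \emph{state} remainder $X_{t_{n+1}}^{t_n,X^n}-X^{n+1}$, a sum of length-$3$ multiple It\^o integrals whose coefficients involve only derivatives of $b,\sigma$. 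Applying the Malliavin integration-by-parts formula \eqref{Malp} three times converts each $\mathbb{E}_{t_n}^{X^n}[F_{x_i}^{X^n}\,I_\alpha[\,\cdot\,]]$ into a triple Lebesgue integral over $t_n\le s_1\le s_2\le s_3\le t_{n+1}$ of $\mathbb{E}_{t_n}^{X^n}[D^\alpha_{s_1s_2s_3}F_{x_i}^{X^n}\cdot g_\alpha^i(s_1,X_{s_1})]$, which immediately produces the factor $\Delta^3$; one then applies $\nabla_{x^n}$ under the integral. The derivative count on $H$ is now sharp: one derivative from the mean-value form, at most three from the Malliavin chain rule on $F_{x_i}^{X^n}$, and one from $\nabla_{x^n}$, totalling exactly five. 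The missing idea in your plan is to put the It\^o--Taylor expansion on $X_t$ rather than on $H(X_t)$ and to extract $\Delta^3$ via Malliavin integration by parts on the length-$3$ integrals, not via weak-order matching.
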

\begin{proof}
For $0\leq n\leq N-2$, using the multiple Taylor expansion, we obtain
\begin{equation}\label{lem351a}
 \mathbb{E}_{t_{n}}^{X^{n}}[H(X_{t_{n+1}}^{t_n,X^n})-H(X^{n+1})]
=\sum\limits_{i=1}^d \mathbb{E}_{t_{n}}^{X^{n}}[h_{n}^i F_{x_{i}}^{X^n}],
\end{equation}
where $h_{n}^i=\sum\limits_{\alpha\in \mathcal{A}_3}I_\alpha[g_{\alpha}^i(\cdot,X_\cdot^{t_{n},X^{n}})]_{t_{n},t_{n+1}}$ and $F_{x_{i}}^{X^n}=\int_0^1 H_{x_i}'\big(X^{n+1}+\lambda(X_{t_{n+1}}^{t_n,X^n}-X^{n+1})\big)d\lambda.$
Then under the assumption $b,\sigma\in C_b^{2,5}$ and $H\in C_b^{5}$,
by the integration-by-parts formula \eqref{Malp} of Malliavin calculus
and inequality \eqref{est_Xs}, we deduce
\begin{equation*}
\begin{aligned}
&
\big|\nabla_{x^n} \mathbb{E}_{t_{n}}^{X^{n}}[H(X_{t_{n+1}}^{t_n,X^n})-H(X^{n+1})]\big|= \big|\sum\limits_{\alpha\in \mathcal{A}_3}\sum\limits_{i=1}^d \nabla_{x^n} \mathbb{E}_{t_{n}}^{X^{n}}[h_{n}^i F_{x_{i}}^{X^n}]\big|\\
=&
\big|\sum\limits_{\alpha\in \mathcal{A}_3}\sum\limits_{i=1}^d\int_{t_n}^{t_{n+1}}\int_{t_n}^{s_3}\int_{t_n}^{s_2} \mathbb{E}_{t_n}^{X^n}\big[\nabla_{x^n} \big\{D_{s_1 s_2 s_3}^{\alpha}(F_{x_{i}}^{X^{n}}) g_\alpha^i(s_1,X_{s_1}^{t_n,X^n})\big\}\big]ds_1ds_2 ds_3\big|\\
\leq \,&
C(1+|X^n|^q) \Delta^3.
\end{aligned}
\end{equation*}
The proof is competed.
\end{proof}

\begin{lem}\label{lem52}
If 
$b,\sigma\in C_b^{2,4}$ and $H\in C_{b}^{3,6}$,
then under Hypothesis \ref{hyp0}, for $1\leq n \leq N-2$,
there exists a generic positive integer $q$ such that

\begin{equation}\label{lem1eq1}
\big|\mathbb{E}_{t_{n}}^{X^n}\big[\Delta W_{n}R_{n+1}^{n+2}\big]\big|
\leq  C(1+|X^n|^q) \Delta^4,\\
\end{equation}
moreover, if $b,\sigma\in C_b^{2,5}$ and $H\in C_{b}^{3,7}$, then
\begin{equation}\label{lem1eq2}
\big|\mathbb{E}_{t_{n}}^{X^n}\big[\Delta W_{n} \nabla_{x^n} R_{n+1}^{n+2}\big]\big|
\leq  C(1+|X^n|^q)\Delta^4.
\end{equation}
where
$R_{n+1}^{n+2}=\mathbb{E}_{t_{n+1}}^{X^{n+1}}\big[\int_{t_{n+1}}^{t_{n+2}}\big\{H(t,X_t^{t_{n+1},X^{n+1}})
-\frac{H(t_{n+1},X^{n+1})+H(t_{n+2},X_{t_{n+2}}^{t_{n+1},X^{n+1}})}{2}\big\}\,dt\big],
$
and $C$ is a positive constant depending on $K$, and upper bounds of
the derivatives of $b$, $\sigma$ and $H$.
\end{lem}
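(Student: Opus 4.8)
The plan is to reduce \eqref{lem1eq1} and \eqref{lem1eq2} to a pointwise bound on the gradient (respectively the Hessian) in $X^{n+1}$ of $R_{n+1}^{n+2}$, and then to use the Malliavin integration-by-parts formula to turn the factor $\Delta W_n$ — which on its own would only supply a gain of order $\Delta^{7/2}$ — into a full extra power of $\Delta$. The starting point is that the inner conditional expectation in the definition of $R_{n+1}^{n+2}$ averages out all the noise after time $t_{n+1}$, so $R_{n+1}^{n+2}$ is $\mathcal F_{t_{n+1}}$-measurable only through $X^{n+1}$; writing $u(t,x):=\mathbb E[H(t,X_t^{t_{n+1},x})]$ one has $R_{n+1}^{n+2}=\rho(X^{n+1})$ with
\[
\rho(x)=\int_{t_{n+1}}^{t_{n+2}}u(t,x)\,dt-\tfrac{\Delta}{2}\big(u(t_{n+1},x)+u(t_{n+2},x)\big)=-\tfrac{1}{2}\int_{t_{n+1}}^{t_{n+2}}(t-t_{n+1})(t_{n+2}-t)\,u_{tt}(t,x)\,dt ,
\]
i.e. $\rho(x)$ is the trapezoidal-quadrature error of $t\mapsto u(t,x)$ on $[t_{n+1},t_{n+2}]$ in Peano-kernel form. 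Differentiating under the integral gives $|\nabla_{x^{n+1}}R_{n+1}^{n+2}|\le C\Delta^3\sup_t|\nabla_x u_{tt}(t,X^{n+1})|$ and $|\nabla^2_{x^{n+1}}R_{n+1}^{n+2}|\le C\Delta^3\sup_t|\nabla^2_x u_{tt}(t,X^{n+1})|$, so the whole lemma rests on controlling $\nabla_x u_{tt}$ and $\nabla^2_x u_{tt}$.

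For this I would use the identities $u_t(t,x)=\mathbb E[(L^0H)(t,X_t^{t_{n+1},x})]$ and, iterating, $u_{tt}(t,x)=\mathbb E[((L^0)^2H)(t,X_t^{t_{n+1},x})]$ (with $L^0$ the operator in \eqref{defL}), together with the differentiability of the flow $x\mapsto X_t^{t_{n+1},x}$ and the moment bound \eqref{est_Xs}. Under $b,\sigma\in C_b^{2,4}$ and $H\in C_b^{3,6}$ this yields $|\nabla_x u_{tt}(t,x)|\le C(1+|x|^q)$, hence $|\nabla_{x^{n+1}}R_{n+1}^{n+2}|\le C(1+|X^{n+1}|^q)\Delta^3$; under the stronger hypotheses $b,\sigma\in C_b^{2,5}$, $H\in C_b^{3,7}$ one gets in addition $|\nabla^2_x u_{tt}(t,x)|\le C(1+|x|^q)$ and thus $|\nabla^2_{x^{n+1}}R_{n+1}^{n+2}|\le C(1+|X^{n+1}|^q)\Delta^3$. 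This part runs along the lines of the proof of Lemma \ref{lem51}.

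With these bounds in hand, \eqref{lem1eq1} follows by freezing $X^n$ (conditioning on $\mathcal F_{t_n}$), writing $\Delta W_n^i=\int_{t_n}^{t_{n+1}}dW_t^i$, and applying Lemma \ref{nualart} and then the chain rule of Lemma \ref{pro1}:
\[
\mathbb E_{t_n}^{X^n}\big[\Delta W_n^iR_{n+1}^{n+2}\big]=\mathbb E_{t_n}^{X^n}\Big[\int_{t_n}^{t_{n+1}}\nabla_{x^{n+1}}R_{n+1}^{n+2}\,D_{i,t}X^{n+1}\,dt\Big] ;
\]
since $D_{i,t}X^{n+1}$ (for $t\in[t_n,t_{n+1}]$) is, by the computations behind \eqref{phin}, a polynomial in the $W$-increments over $[t_n,t_{n+1}]$ with coefficients of polynomial growth in $X^n$, it is $L^p(\mathbb E_{t_n}^{X^n})$-bounded by $C(1+|X^n|^q)$, and combining this with the $\Delta^3$ bound on $\nabla_{x^{n+1}}R_{n+1}^{n+2}$, the Cauchy--Schwarz inequality, the moment estimate $\mathbb E_{t_n}^{X^n}[(1+|X^{n+1}|^q)^2]\le C(1+|X^n|^q)$ (immediate from $X^{n+1}=X^n+\phi^n$), and the remaining $dt$-integration (the last factor $\Delta$) yields \eqref{lem1eq1}. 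For \eqref{lem1eq2} I use $\nabla_{x^n}R_{n+1}^{n+2}=\nabla_{x^{n+1}}R_{n+1}^{n+2}\,\nabla_{x^n}X^{n+1}$ ($\rho$ being independent of $x^n$) and the same integration by parts, after which expanding $D_{i,t}\{\nabla_{x^{n+1}}R_{n+1}^{n+2}\,\nabla_{x^n}X^{n+1}\}$ by the product and chain rules leaves terms containing either $\nabla^2_{x^{n+1}}R_{n+1}^{n+2}\,D_{i,t}X^{n+1}\,\nabla_{x^n}X^{n+1}$ or $\nabla_{x^{n+1}}R_{n+1}^{n+2}\,D_{i,t}(\nabla_{x^n}X^{n+1})$; since $\nabla_{x^n}X^{n+1}=I_{d\times d}+\sum_{\alpha\in\Gamma_2\backslash\{v\}}\partial_xg_\alpha(t_n,X^n)I_{\alpha,n}$ and its Malliavin derivative are again $L^p$-bounded by $C(1+|X^n|^q)$, the same estimation, now using both $\Delta^3$ bounds, gives \eqref{lem1eq2}. (When $n=N-2$ one has $t_{n+2}-t_{n+1}=\Delta^2$, so $|R_{n+1}^{n+2}|=O(\Delta^6)$ and both estimates are immediate.)

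The main obstacle is the pair of pointwise estimates on $\nabla_{x^{n+1}}R_{n+1}^{n+2}$ and $\nabla^2_{x^{n+1}}R_{n+1}^{n+2}$, i.e. the uniform polynomial-growth control of $\nabla_x u_{tt}$ and $\nabla^2_x u_{tt}$ for $u(t,x)=\mathbb E[H(t,X_t^{t_{n+1},x})]$. This is precisely where the high-order time--space differentiability required of $H$, $b$ and $\sigma$ — and the one extra spatial derivative demanded in \eqref{lem1eq2} — is consumed, since $(L^0)^2H$ already carries two time- and four space-derivatives of $H$ together with one time-derivative of $b,\sigma$, while its gradient (resp. Hessian), paired with the first (resp. second) variation of the flow $X_t^{t_{n+1},x}$, asks for one (resp. two) more derivatives. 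Everything downstream — the Malliavin integration by parts, the chain and product rules, and the polynomial-growth moment bounds for $X^{n+1}$ and for its variational and Malliavin derivatives — is routine and parallels the argument already carried out in the proof of Lemma \ref{lem51}.
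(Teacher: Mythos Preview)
Your proof is correct and follows essentially the same approach as the paper: both arguments recognize that $R_{n+1}^{n+2}$ is a deterministic function of $X^{n+1}$ equal to the trapezoidal-rule remainder for $t\mapsto \mathbb E[H(t,X_t^{t_{n+1},x})]$, identify its leading size as $\Delta^3$ times (a derivative of) $(L^0)^2H$, and then apply the Malliavin integration-by-parts formula to trade $\Delta W_n$ for an additional factor $\Delta$. The only cosmetic difference is packaging --- you use the Peano-kernel representation of the trapezoidal error while the paper writes out the It\^o--Taylor expansion explicitly and isolates the $-\tfrac{1}{12}L^0L^0H\,\Delta^3$ term before applying \eqref{Malp}.
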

\begin{proof}
Since $\Delta W_{n}$ is $\mathcal{F}_{t_{n+1}}$-measurable increment, we have the
identity
\begin{equation}\label{e4:e29b}\
\begin{array}{ll}
&\!\!\!\! \mathbb{E}_{t_{n}}^{X^n}\Big[\Delta
W_{n}\int_{t_{n+1}}^{t_{n+2}}\Big\{H(t,X_t^{t_{n+1},X^{n+1}})
-\frac{H(t_{n+1},X^{n+1})+H(t_{n+2},X_{t_{n+2}}^{t_{n+1},X^{n+1}})}{2}\Big\}\,dt\Big]\\
= &\!\!\!\! \mathbb{E}_{t_{n}}^{X^n}\Big[\Delta
W_{n}\mathbb{E}^{X^{n+1}}_{t_{n+1}}\Big[\int^{t_{n+2}}_{t_{n+1}}\Big\{H(t,X_t^{t_{n+1},X^{n+1}})
-\frac{H(t_{n+1},X^{n+1})+H(t_{n+2},X_{t_{n+2}}^{t_{n+1},X^{n+1}})}{2}\Big\}\,dt\Big]\Big].
\end{array}
\end{equation}
The It\^o formula then shows that
\begin{equation}\label{e4:e29}\begin{array}{rl}
H(t, X_t^{t_{n+1},X^{n+1}})=
&\!\!\!\!
H(t_{n+1}, X^{n+1})+\int^t_{t_{n+1}}L^0H(s, X_s^{t_{n+1},X^{n+1}}) ds\\
&\!\!\!\!
+\sum\limits_{j=1}^d\int^t_{t_{n+1}}L^jH(s, X_s^{t_{n+1},X^{n+1}})\,dW_s^j,\\
L^0H(s, X_s^{t_{n+1},X^{n+1}})
= &\!\!\!\! L^0H(t_{n+1},X^{n+1})+
\int^s_{t_{n+1}}L^0L^0H(\tau,  X_\tau^{t_{n+1},X^{n+1}})\,d\tau \\
&\!\!\!\!
+\sum\limits_{j=1}^d\int^s_{t_{n+1}}L^jL^0H(\tau,X_\tau^{t_{n+1},X^{n+1}})\,dW_\tau^j,\\
L^0L^0H(\tau, X_\tau^{t_{n+1},X^{n+1}}) = &\!\!\!\! L^0L^0H(t_{n+1},X^{n+1})
+\int_{t_{n+1}}^\tau L^0L^0L^0H(\nu, X_\nu^{t_{n+1},X^{n+1}})d\nu\\
&\!\!\!\!
+\sum\limits_{j=1}^d\int_{t_{n+1}}^\tau L^jL^0L^0H(\nu, X_\nu^{t_{n+1},X^{n+1}})dW_\nu^j.
\end{array}\end{equation}
By the equalities in \eqref{e4:e29}, we have
\begin{equation*}\label{e4:e32}
\begin{aligned}
&
\mathbb{E}^{X^{n+1}}_{t_{n+1}}\Big[\int^{t_{n+2}}_{t_{n+1}}H(t, X_t^{t_{n+1},X^{n+1}})\,dt\Big]\\
= \,&
H(t_{n+1}, X^{n+1})\Delta +\frac{1}{2}L^0H(t_{n+1},X^{n+1})\Delta^2+\frac{1}{6}L^0L^0H(t_{n+1},X^{n+1})\Delta^3\\
&
+\int_{t_{n+1}}^{t_{n+2}}\int_{t_{n+1}}^t\int_{t_{n+1}}^s\int_{t_{n+1}}^\tau\mathbb{E}_{t_{n+1}}^{X^{n+1}}[L^0L^0L^0H(\nu, X_\nu^{t_{n+1},X^{n+1}})]\, d\nu d\tau ds dt,\\
\end{aligned}
\end{equation*}
and
\begin{equation*}\label{e4:e33}
\begin{aligned}
&
-\frac{1}{2}\int_{t_{n+1}}^{t_{n+2}}\mathbb{E}_{t_{n+1}}^{X^{n+1}}[H(t_{n+2},X_{t_{n+2}}^{t_{n+1},X^{n+1}})]\,dt\\
=&
-\frac{1}{2}H(t_{n+1}, X^{n+1})\Delta -\frac{1}{2}L^0H(t_{n+1}, X^{n+1})\Delta^2-\frac{1}{4}L^0L^0H(t_{n+1},X^{n+1})\Delta^3\\
&
-\frac{1}{2}\int_{t_{n+1}}^{t_{n+2}}\int_{t_{n+1}}^{t_{n+2}}\int_{t_{n+1}}^s\int_{t_{n+1}}^\tau\mathbb{E}_{t_{n+1}}^{X^{n+1}}[L^0L^0L^0H(\nu,
X_\nu^{t_{n+1},X^{n+1}})]\, d\nu d\tau  ds dt.
\end{aligned}
\end{equation*}
Then by the above two identities, we deduce
\begin{equation}\label{s7:i19}\begin{array}{ll}
&~\mathbb{E}_{t_{n}}^{X^n}\big[\Delta
W_{n}R_{n+1}^{n+2}\big]\\
=&-\frac{1}{12}\mathbb{E}_{t_{n}}^{X^{n}}\big[\Delta W_{n}L^0L^0H(t_{n+1},X^{n+1})\big]\Delta^3\\&
+\int^{t_{n+2}}_{t_{n+1}}\int^t_{t_{n+1}}\int^s_{t_{n+1}}\int^\tau_{t_{n+1}}\mathbb{E}_{t_{n}}^{X^{n}}[\Delta W_{n}L^0L^0L^0H(\nu, X_\nu^{t_{n+1},X^{n+1}})]\, d\nu d\tau  ds dt\\&
-\frac{1}{2}\int_{t_{n+1}}^{t_{n+2}}\int_{t_{n+1}}^{t_{n+2}}\int_{t_{n+1}}^s\int_{t_{n+1}}^\tau\mathbb{E}_{t_{n}}^{X^{n}}[\Delta W_{n}L^0L^0L^0H(\nu,
X_\nu^{t_{n+1},X^{n+1}})]\, d\nu d\tau  ds dt.
\end{array}\end{equation}
From the Malliavin integration-by-parts formula \eqref{Malp} we deduce
\begin{equation}\label{311}
\mathbb{E}_{t_{n}}^{X^{n}}[\Delta W_{n}L^0L^0H(t_{n+1},X^{n+1})]=\int_{t_n}^{t_{n+1}}\mathbb{E}_{t_{n}}^{X^{n}}\big[D_sL^0L^0H(t_{n+1},X^{n+1})]ds.
\end{equation}
Now, under the assumptions of the lemma, combining (\ref{e4:e29b}), (\ref{s7:i19}) and \eqref{311}
we easily obtain the inequality  \eqref{lem1eq1}.
Similarly we can prove the inequality \eqref{lem1eq2}.  The proof is completed.
\end{proof}

\begin{lem}\label{lem53}
Suppose 
$b,\sigma\in C_b^{2,4}$ and $H\in C_b^{3,6}$,
then under Hypotheses \ref{hyp0},
for $0\leq n\leq N-2$, there exists a positive integer $q$ such that
\begin{equation*}\begin{array}{rr}&
\big|\mathbb{E}_{t_n}^{X^n}\big[ R_{n}^{n+1}- R_{n+1}^{n+2}\big]\big|\leq
C(1+|X^n|^q)\Delta^4,
\end{array}\end{equation*}
where
$
  R_{n}^{n+1}=\mathbb{E}_{t_{n}}^{X^{n}}\big[\int_{t_{n}}^{t_{n+1}}\big\{H(t,X_t^{t_{n},X^{n}})
-\frac{H(t_{n},X^{n})+H(t_{n+1},X_{t_{n+1}}^{t_{n},X^{n}})}{2}\big\}\,dt\big],
$
and $C$ is a positive constant depending on $K$, and upper bounds of
the derivatives of $b,\sigma$ and $H$.
\end{lem}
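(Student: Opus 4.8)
The plan is to reduce everything to the explicit It\^o--Taylor expansion already established in the proof of Lemma~\ref{lem52}. Carrying that expansion out on $[t_{n+1},t_{n+2}]$ with the exact diffusion started from $X^{n+1}$ yields
\[
R_{n+1}^{n+2}=-\tfrac{1}{12}\,L^0L^0H(t_{n+1},X^{n+1})\,\Delta^3+\widetilde\rho_{n+1},
\]
and the very same computation on $[t_n,t_{n+1}]$ with the exact diffusion started from $X^n$ gives
\[
R_{n}^{n+1}=-\tfrac{1}{12}\,L^0L^0H(t_{n},X^{n})\,\Delta^3+\widetilde\rho_{n},
\]
where $L^0$ is the operator in \eqref{defL} and, in both cases, the remainder $\widetilde\rho$ is a fixed linear combination of quadruple iterated time integrals, over an interval of length $\Delta$, of the conditional expectations $\mathbb{E}^{X}_{t}\!\big[L^0L^0L^0H(\nu,X_\nu^{t,X})\big]$ (evaluated at $(t,X)=(t_n,X^n)$, respectively $(t_{n+1},X^{n+1})$). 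Since $R_n^{n+1}$ is already a deterministic function of $X^n$, subtracting and taking $\mathbb{E}_{t_n}^{X^n}[\cdot]$ gives
\[
\mathbb{E}_{t_n}^{X^n}\big[R_n^{n+1}-R_{n+1}^{n+2}\big]
=-\tfrac{1}{12}\Delta^3\,\mathbb{E}_{t_n}^{X^n}\big[L^0L^0H(t_n,X^n)-L^0L^0H(t_{n+1},X^{n+1})\big]
+\widetilde\rho_n-\mathbb{E}_{t_n}^{X^n}\big[\widetilde\rho_{n+1}\big].
\]

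For the leading term I would split the increment as
\[
L^0L^0H(t_n,X^n)-L^0L^0H(t_{n+1},X^{n+1})
=\big[L^0L^0H(t_n,X^n)-L^0L^0H(t_n,X^{n+1})\big]+\big[L^0L^0H(t_n,X^{n+1})-L^0L^0H(t_{n+1},X^{n+1})\big].
\]
The temporal part is bounded pointwise by $\Delta\sup|\partial_t L^0L^0H|$, while the spatial part, after applying $\mathbb{E}_{t_n}^{X^n}[\cdot]$, is bounded by $C(1+|X^n|^q)\Delta$ by the one-step weak consistency estimate \eqref{c5Ex} with $g=L^0L^0H(t_n,\cdot)$. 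The hypotheses $b,\sigma\in C_b^{2,4}$ and $H\in C_b^{3,6}$ are precisely what guarantee that $L^0L^0H$, $\partial_t L^0L^0H$ and $L^0L^0L^0H$ are well defined with bounded derivatives and at most polynomial growth in $x$. Hence the leading term contributes $O\big((1+|X^n|^q)\Delta^4\big)$.

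For the remainders, $\widetilde\rho_n$ is a deterministic function of $X^n$, and since $|L^0L^0L^0H(t,x)|\le C(1+|x|^q)$ the moment bound \eqref{est_Xs} applied inside the four nested integrals over $[t_n,t_{n+1}]$ gives $|\widetilde\rho_n|\le C(1+|X^n|^q)\Delta^4$. Likewise $\widetilde\rho_{n+1}=\Theta(X^{n+1})$ for a deterministic $\Theta$ with $|\Theta(x)|\le C(1+|x|^q)\Delta^4$; combining this with $\mathbb{E}_{t_n}^{X^n}\big[1+|X^{n+1}|^q\big]\le C(1+|X^n|^q)$ (from \eqref{Xnn10} together with \eqref{est_Xs} and \eqref{c5st}) we get $\big|\mathbb{E}_{t_n}^{X^n}[\widetilde\rho_{n+1}]\big|\le C(1+|X^n|^q)\Delta^4$. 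Adding the three estimates proves the stated inequality.

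The essential point, and the only place where genuine care is needed, is that $R_{n+1}^{n+2}$ is anchored at the \emph{random} point $(t_{n+1},X^{n+1})$ rather than at $(t_{n+1},X_{t_{n+1}}^{t_n,X^n})$, so the two trapezoidal truncation errors --- each individually only $O(\Delta^3)$ --- do not cancel exactly at the level of the diffusion; the mismatch between $L^0L^0H(t_n,X^n)$ and $L^0L^0H(t_{n+1},X^{n+1})$ is absorbed by the one-step consistency estimate \eqref{c5Ex}, which is exactly what upgrades the bound to $O(\Delta^4)$. Apart from that, the work is purely a matter of tracking which smoothness of $b$, $\sigma$ and $H$ is needed to keep every iterated-integral remainder of order $\Delta^4$ after passing through the conditional expectations; no idea beyond the machinery of Lemmas~\ref{lem51}--\ref{lem52} is required.
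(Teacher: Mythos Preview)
Your argument is correct and follows essentially the same route as the paper: expand $R_n^{n+1}$ and $R_{n+1}^{n+2}$ via It\^o's formula to isolate the leading term $-\tfrac{1}{12}L^0L^0H\,\Delta^3$ plus a quadruple time-integral remainder in $L^0L^0L^0H$, then bound the difference of leading terms by one extra $\Delta$ via the one-step consistency of the weak order-2 scheme, and bound each remainder separately by $C(1+|X^n|^q)\Delta^4$ using the moment estimate \eqref{est_Xs}. Your treatment is in fact slightly more explicit than the paper's, which compresses the leading-term bound into a single line; note only that the lemma as stated invokes just Hypothesis~\ref{hyp0}, so strictly speaking \eqref{c5Ex} should be viewed as a direct consequence of the scheme \eqref{Xnn10} and the assumed smoothness of $b,\sigma,H$ rather than as an independent assumption.
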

\begin{proof} Similar  to get (\ref{s7:i19}), we have the following two
equalities:
\begin{equation*}\label{e4:e10}
\begin{array}{ll}
&\!\!\!\!
\mathbb{E}_{t_n}^{X^n}[R_n^{n+1}]\\
=&\!\!\!\!
-\frac{1}{12}L^0L^0H(t_n,X^n)\Delta^3
+\int_{t_n}^{t_{n+1}}\int_{t_n}^t\int_{t_n}^s\int_{t_n}^r\mathbb{E}_{t_n}^{X^n}[L^0L^0L^0H(\tau, X_\tau^{t_n,X^n})]\,d\tau dr ds dt\\
&\!\!\!\!
-\frac{1}{2}\int_{t_n}^{t_{n+1}}\int_{t_n}^{t_{n+1}}\int_{t_n}^s\int_{t_n}^r\mathbb{E}_{t_n}^{X^n}[L^0L^0L^0H(\tau,
X_\tau^{t_n,X^n})]\,d\tau dr ds dt,
\end{array}
\end{equation*}
and
\begin{equation*}\label{e4:e11}
\begin{array}{ll}
&\!\!\!\! \mathbb{E}_{t_{n+1}}^{X^{n+1}}[R_{n+1}^{n+2}]\\
=&\!\!\!\!
-\frac{1}{12}L^0L^0H(t_{n+1},X^{n+1})\Delta^3
+\int_{t_{n+1}}^{t_{n+2}}\int_{t_{n+1}}^t\int_{t_{n+1}}^s\int_{t_{n+1}}^r
\mathbb{E}_{t_{n+1}}^{X^{n+1}}[L^0L^0L^0H(\tau, X_\tau^{t_{n+1},X^{n+1}})]\,d\tau dr ds dt\\
&\!\!\!\!
-\frac{1}{2}\int_{t_{n+1}}^{t_{n+2}}\int_{t_{n+1}}^{t_{n+2}}\int_{t_{n+1}}^s\int_{t_{n+1}}^r
\mathbb{E}_{t_{n+1}}^{X^{n+1}}[L^0L^0L^0H(\tau,
X_\tau^{t_{n+1},X^{n+1}})]\,d\tau dr ds dt.
\end{array}
\end{equation*}
Now, under the conditions of the lemma and  from the above two equations, we deduce
\begin{equation*}
\begin{aligned}
\quad\;\big|\mathbb{E}_{t_n}^{X^n}[ R_{n}^{n+1}- R_{n+1}^{n+2}]\big|
&
\leq\! \frac{1}{12}\Delta^3\big|\mathbb{E}_{t_n}^{X^n}\big[L^0L^0H(t_{n+1},X^{n+1})
-L^0L^0H(t_n,X^n)\big]\big|
+C(1+|X^n|^q)\Delta^4
\\&
\leq  C(1+|X^n|^q)\Delta^4.
\end{aligned}
\end{equation*}
We complete the proof.
\end{proof}

\begin{lem}\label{lem54}
For $X^{n+1}=\sum\limits_{\alpha\in\Gamma_2}g_\alpha(t_n,X^n)I_{\alpha, n}$,
if 
$b,\sigma\in C_b^{2,4}$ and $H\in C_b^{5}$.
then under Hypothesis \ref{hyp0},
for $1\leq n \leq N-2$, there exists a positive generic integer $q$ such that
\begin{eqnarray}\label{lemleq4}
\big|\mathbb{E}_{t_{n}}^{X^{n}}\big[\Delta W_{n} U_{n+1}^{n+2}\big]\big|
\leq  C(1+|X^n|^q) \Delta^4,
\end{eqnarray}
moreover, if $b,\sigma\in C_b^{2,5}$ and $H\in C_b^{6}$, then
\begin{eqnarray}\label{lemqel}
\big|\mathbb{E}_{t_{n}}^{X^{n}}\big[\Delta W_{n} \nabla_{x^n} U_{n+1}^{n+2}\big]\big|
\leq  C(1+|X^n|^q) \Delta^4,
\end{eqnarray}
where $U_{n+1}^{n+2}=\mathbb{E}_{t_{n+1}}^{X^{n+1}}\big[H(X_{t_{n+2}}^{t_{n+1},X^{n+1}})
-H(X^{n+2})]$,
and $C$ is a positive constant depending on $K$, and upper bounds of
the derivatives of $b,\sigma$ and $H$.
\end{lem}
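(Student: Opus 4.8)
The plan is to exploit that $\Delta W_{n}$ is $\mathcal{F}_{t_{n+1}}$-measurable while $U_{n+1}^{n+2}$ is a deterministic function of $X^{n+1}$, so that the single remaining stochastic integral $\Delta W_{n}^{k}=\int_{t_{n}}^{t_{n+1}}dW_{s}^{k}$ can be removed by the Malliavin integration-by-parts formula \eqref{Malp}, in the conditional form already used to derive \eqref{311}. First I would write, for $1\le k\le d$,
\[
\mathbb{E}_{t_{n}}^{X^{n}}\!\big[\Delta W_{n}^{k}\,U_{n+1}^{n+2}\big]
=\int_{t_{n}}^{t_{n+1}}\mathbb{E}_{t_{n}}^{X^{n}}\!\big[D_{k,s}U_{n+1}^{n+2}\big]\,ds ,
\]
which already supplies one factor $\Delta$ from the $ds$-integration; it then remains to show that $D_{k,s}U_{n+1}^{n+2}$ is of conditional size $C(1+|X^{n}|^{q})\Delta^{3}$.

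Since $U_{n+1}^{n+2}=\psi(X^{n+1})$ for the deterministic map $\psi(x)=\mathbb{E}\big[H(X_{t_{n+2}}^{t_{n+1},x})-H(\widehat X^{n+2})\big]$, with $\widehat X^{n+2}$ the one-step It\^o--Taylor value started from $(t_{n+1},x)$, the Malliavin chain rule (Lemma \ref{pro1}) gives for $t_{n}\le s\le t_{n+1}$
\[
D_{k,s}U_{n+1}^{n+2}=\nabla_{x}\psi(X^{n+1})\,D_{k,s}X^{n+1}
=\big(\nabla_{x^{n+1}}U_{n+1}^{n+2}\big)\,D_{k,s}X^{n+1}.
\]
By Lemma \ref{lem51} applied with $n$ replaced by $n+1$ (the remaining boundary case $n=N-2$, in which the step $t_{N-1}\to t_{N}$ has length $\Delta^{2}$, is handled the same way and is only more favourable), one has $\big|\nabla_{x^{n+1}}U_{n+1}^{n+2}\big|\le C(1+|X^{n+1}|^{q})\Delta^{3}$, whereas the explicit form $X^{n+1}=X^{n}+\phi^{n}$ together with \eqref{phin} and the linear growth of $b,\sigma$ gives $\mathbb{E}_{t_{n}}^{X^{n}}[|D_{k,s}X^{n+1}|^{2}]\le C(1+|X^{n}|^{2})$. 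Combining these through the Cauchy--Schwarz inequality and the conditional moment bound $\mathbb{E}_{t_{n}}^{X^{n}}[|X^{n+1}|^{2q}]\le C(1+|X^{n}|^{2q})$ (a consequence of \eqref{est_Xs}--\eqref{c5st} and $X^{n+1}=X^n+\phi^n$) yields $\mathbb{E}_{t_{n}}^{X^{n}}[|D_{k,s}U_{n+1}^{n+2}|]\le C(1+|X^{n}|^{q})\Delta^{3}$, hence \eqref{lemleq4}.

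For \eqref{lemqel} I would argue in the same way after writing, via the chain rule for $\nabla_{x^{n}}$, $\nabla_{x^{n}}U_{n+1}^{n+2}=\big(\nabla_{x^{n+1}}U_{n+1}^{n+2}\big)\nabla_{x^{n}}X^{n+1}=:G(X^{n+1})\,\nabla_{x^{n}}X^{n+1}$ with $|G(x)|\le C(1+|x|^{q})\Delta^{3}$ by Lemma \ref{lem51}. Applying \eqref{Malp} once more,
\[
\mathbb{E}_{t_{n}}^{X^{n}}\!\big[\Delta W_{n}^{k}\,\nabla_{x^{n}}U_{n+1}^{n+2}\big]
=\int_{t_{n}}^{t_{n+1}}\mathbb{E}_{t_{n}}^{X^{n}}\!\big[D_{k,s}\big(G(X^{n+1})\,\nabla_{x^{n}}X^{n+1}\big)\big]\,ds ,
\]
and expanding the Malliavin derivative of the product yields the two terms $\big(\nabla_{x}G(X^{n+1})D_{k,s}X^{n+1}\big)\nabla_{x^{n}}X^{n+1}$ and $G(X^{n+1})\,D_{k,s}\big(\nabla_{x^{n}}X^{n+1}\big)$. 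The first needs $|\nabla_{x}G(x)|=|\nabla^{2}_{x^{n+1}}U_{n+1}^{n+2}|\le C(1+|x|^{q})\Delta^{3}$, which is precisely why one more derivative of $b,\sigma,H$ is assumed here ($b,\sigma\in C_b^{2,5}$, $H\in C_b^{6}$): it follows by differentiating the representation in the proof of Lemma \ref{lem51} once more, the triple It\^o integrals still supplying the factor $\Delta^{3}$. The second term uses $|G(X^{n+1})|\le C(1+|X^{n+1}|^{q})\Delta^{3}$ directly, while $\nabla_{x^{n}}X^{n+1}$ and $D_{k,s}(\nabla_{x^{n}}X^{n+1})$ are controlled in conditional $L^{2}$ (with at most linear growth in $X^{n}$) using the $C_b^{2,\cdot}$-regularity of $b,\sigma$. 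Cauchy--Schwarz, the moment bounds and the $ds$-integration then give $C(1+|X^{n}|^{q})\Delta^{4}$, which is \eqref{lemqel}.

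The routine part is the integration-by-parts bookkeeping; the \emph{genuine obstacle} is the pair of $\Delta^{3}$-bounds on the first and second $X^{n+1}$-derivatives of $U_{n+1}^{n+2}$. The first is Lemma \ref{lem51}; the second is its one-more-derivative analogue, and both rest on the same mechanism: in the It\^o--Taylor/Taylor expansions of $H(X_{t_{n+2}}^{t_{n+1},X^{n+1}})$ and $H(X^{n+2})$ around $(t_{n+1},X^{n+1})$ all terms of order $\le\Delta^{2}$ cancel because the scheme is weak order two, so that $U_{n+1}^{n+2}$ and its $X^{n+1}$-derivatives are $O(\Delta^{3})$. Keeping the polynomial-growth exponents $q$ under control through the successive applications of Cauchy--Schwarz and the moment estimates, and checking that the constants remain independent of the partition, is the only delicate point. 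An equivalent route would be to mimic the proof of Lemma \ref{lem52}: expand both expectations explicitly, isolate the $\Delta^{3}$ leading term built from the operators $L^{0}$ acting on $H$ at $(t_{n+1},X^{n+1})$, apply \eqref{Malp} to that term, and bound the $O(\Delta^{4})$ remainder by a crude Cauchy--Schwarz estimate against $\Delta W_{n}$.
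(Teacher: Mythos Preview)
Your approach is correct and is essentially the paper's: both remove $\Delta W_{n}$ via the Malliavin integration-by-parts formula \eqref{Malp} and then exploit the weak-order-two structure of the scheme to extract the factor $\Delta^{3}$ from $U_{n+1}^{n+2}$; you package the latter step by invoking Lemma~\ref{lem51} (and its one-more-derivative analogue) as a black box, while the paper unfolds the same Taylor/multiple-It\^o representation directly inside the proof and applies integration-by-parts a second time. One bookkeeping caveat: Lemma~\ref{lem51} as stated requires $b,\sigma\in C_b^{2,5}$, whereas \eqref{lemleq4} assumes only $b,\sigma\in C_b^{2,4}$, so your black-box appeal formally consumes one more spatial derivative of $b,\sigma$ than the lemma claims; if you want to match the stated hypotheses exactly you would need to reproduce the expansion rather than quote Lemma~\ref{lem51}.
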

\begin{proof}
The Taylor expansion shows that
\begin{equation*}\label{2.71}
H(X_{t_{n+2}}^{t_{n+1},X^{n+1}})-H(X^{n+2})
=\sum\limits_{i=1}^d h_{n+1}^i F_{x_i}^{X^{n+1}},
\end{equation*}
where $h_{n+1}^i=\sum\limits_{\alpha\in \mathcal{A}_3}I_\alpha[g_{\alpha}^i(\cdot,X_\cdot^{t_{n+1},X^{n+1}})]_{t_{n+1},t_{n+2}}$ and $F_{x_i}^{X^{n+1}}=\int_0^1 H_{x_i}'\big(X^{n+2}+\lambda(X_{t_{n+2}}^{t_{n+1},X^{n+1}}-X^{n+2})\big)d\lambda.$
Now, under the conditions of the lemma, using the integration-by-parts formula of Malliavin
calculus \eqref{Malp}, we have
\begin{equation*}\begin{array}{rll}
 U_{n+1}^{n+2}=&\!\!\!\!\mathbb{E}_{t_{n+1}}^{X^{n+1}}\big[H(X_{t_{n+2}}^{t_{n+1},X^{n+1}})-H(X^{n+2})\big]\\
=&\!\!\!\!\sum\limits_{\alpha\in \mathcal{A}_3} \sum\limits_{i=1}^d \mathbb{E}_{t_{n+1}}^{X^{n+1}}\big[ F_{x_i}^{X^{n+1}} I_\alpha[g_{\alpha}^i(\cdot,X_\cdot^{t_{n+1},X^{n+1}})]_{t_{n+1},t_{n+2}}\big] \\
=&\!\!\!\! \sum\limits_{\alpha\in \mathcal{A}_3}  \sum\limits_{i=1}^d \mathbb{E}_{t_{n+1}}^{X^{n+1}}\big[I_{(0,0,0)}\big[D_{s_1s_2s_3}^\alpha\{F_{x_i}^{X^{n+1}}\} g_{\alpha}^i(s_1,X_{s_1}^{t_{n+1},X^{n+1}})\big]_{t_{n+1},t_{n+2}}\big].
\end{array}\end{equation*}
By the integration-by-parts of Malliavin calculus \eqref{Malp} again we deduce
\begin{equation*}
\begin{array}{rl}
&\!\!\!\! \left|\mathbb{E}_{t_{n}}^{X^{n}}\left[\Delta W_{n}U_{n+1}^{n+2}\right]\right|\\
=& \!\!\!\!
\Big|\sum\limits_{\alpha\in \mathcal{A}_3}\sum\limits_{i=1}^d\mathbb{E}_{t_{n}}^{X^{n}}\Big[\Delta W_{n}\mathbb{E}_{t_{n+1}}^{X^{n+1}}\Big[I_{(0,0,0)}\big[D_{s_1s_2s_3}^\alpha\{F_{x_i}^{X^{n+1}}\} g_{\alpha}^i(s_1,X_{s_1}^{t_{n+1},X^{n+1}})\big]_{t_{n+1},t_{n+2}}\Big]\Big]\Big|\\
=&\!\!\!\! \Big|\sum\limits_{\alpha\in \mathcal{A}_3}\sum\limits_{i=1}^d\mathbb{E}_{t_{n}}^{X^{n}}\Big[I_{(0,0,0)}\big[(W_{s_1}
 -W_{t_n})D_{s_1s_2s_3}^\alpha\{F_{x_i}^{X^{n+1}}\} g_{\alpha}^i(s_1,X_{s_1}^{t_{n+1},X^{n+1}})\big]_{t_{n+1},t_{n+2}}\Big]\Big|\\
=&\!\!\!\!
\Big|\sum\limits_{\alpha\in \mathcal{A}_3}\sum\limits_{i=1}^d\int_{t_{n+1}}^{t_{n+2}}\int_{t_{n+1}}^{s_3}\int_{t_{n+1}}^{s_2}\int_{t_{n}}^{s_1} \mathbb{E}_{t_{n}}^{X^{n}}\big[D_s \big\{D_{s_1 s_2 s_3}^{\alpha}\{F_{x_i}^{X^{n+1}}\}
 g_{\alpha}^i(s_1,X_{s_1}^{t_{n+1},X^{n+1}})\big\}\big]ds ds_1ds_2
ds_3\Big|\\
\leq &\!\!\!\! C(1+|X^n|^q)\Delta^4,
\end{array}
\end{equation*}
which proves \eqref{lemleq4}.
The estimate \eqref{lemqel} can be similarly proved.
The proof is complete.
\end{proof}



\begin{lem}\label{lem55}
For $X^{n+1}=\sum\limits_{\alpha\in \Gamma_2}g_\alpha(t_n,X^n)I_{\alpha, n}$,
if $b,\sigma\in C_b^{3,6}$, $H\in C_b^5$,
then under Hypotheses \ref{hyp0},
for $1\leq n \leq N-2$, there exists a generic positive integer $q$ such that
\begin{equation}\label{312}
\big|U_{n}^{n+1}-\mathbb{E}_{t_{n}}^{X^{n}}\big[U_{n+1}^{n+2}\big]\big|
\leq  C(1+|X^n|^q) \Delta^4
\end{equation}
for $1\leq n \leq N-2$, where
$U_{n}^{n+1}=\mathbb{E}_{t_{n}}^{X^{n}}\big[H(X_{t_{n+1}}^{t_{n},X^{n}})-H(X^{n+1})\big]$,
and $C$ is a positive constant depending on $K$, and upper bounds of
the derivatives of $b,\sigma$ and $H$.
\end{lem}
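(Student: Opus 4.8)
The plan is to write both one-step weak errors $U_{n}^{n+1}$ and $U_{n+1}^{n+2}$ through the \emph{same} Malliavin integration-by-parts representation, as triple time integrals of conditional expectations of smooth functionals, then to match the two representations term by term by a shift of the integration variables and to recognise each matched difference as a one-step weak-consistency increment that carries an extra power of $\Delta$ beyond the $\Delta^{3}$ volume factor of the triple integral.

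As in the proof of Lemma~\ref{lem51}, a multiple Taylor expansion of $H$ together with the fact that the increment $X^{k+1}=X^{k}+\phi^{k}$ matches the It\^o--Taylor expansion of $X_{t_{k+1}}^{t_{k},X^{k}}$ through all multi-indices of length $\le 2$ gives, for a generic step $k$,
\[
U_{k}^{k+1}=\sum_{i=1}^{d}\sum_{\alpha\in\mathcal{A}_{3}}\mathbb{E}_{t_{k}}^{X^{k}}\big[F_{x_{i}}^{X^{k}}\,I_{\alpha}[g_{\alpha}^{i}(\cdot,X_{\cdot}^{t_{k},X^{k}})]_{t_{k},t_{k+1}}\big],\qquad
F_{x_{i}}^{X^{k}}=\int_{0}^{1}H_{x_{i}}'\big(X^{k+1}+\lambda(X_{t_{k+1}}^{t_{k},X^{k}}-X^{k+1})\big)\,d\lambda.
\]
Applying the Malliavin integration-by-parts formula \eqref{Malp} to each summand converts the at most three It\^o integrations inside $I_{\alpha}$ into time integrations, so that
\[
U_{k}^{k+1}=\sum_{i,\alpha}\int_{t_{k}}^{t_{k+1}}\!\int_{t_{k}}^{s_{3}}\!\int_{t_{k}}^{s_{2}}\mathbb{E}_{t_{k}}^{X^{k}}\big[D_{s_{1}s_{2}s_{3}}^{\alpha}\{F_{x_{i}}^{X^{k}}\}\,g_{\alpha}^{i}(s_{1},X_{s_{1}}^{t_{k},X^{k}})\big]\,ds_{1}ds_{2}ds_{3},
\]
which, by time-homogeneity of the flow, is a triple time integral of a function $G_{i,\alpha}\big(t_{k},X^{k};s_{1}-t_{k},s_{2}-t_{k},s_{3}-t_{k}\big)$ built only from $b,\sigma,H$ and finitely many of their derivatives.

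Writing this identity with $k=n$ and with $k=n+1$, taking $\mathbb{E}_{t_{n}}^{X^{n}}[\cdot]$ of the second and substituting $s_{j}\mapsto s_{j}-\Delta$ in its integrals to bring them onto $[t_{n},t_{n+1}]$, I obtain
\[
U_{n}^{n+1}-\mathbb{E}_{t_{n}}^{X^{n}}\big[U_{n+1}^{n+2}\big]
=\sum_{i,\alpha}\int_{t_{n}}^{t_{n+1}}\!\int_{t_{n}}^{s_{3}}\!\int_{t_{n}}^{s_{2}}
\mathbb{E}_{t_{n}}^{X^{n}}\Big[G_{i,\alpha}(t_{n},X^{n};\cdot)-G_{i,\alpha}(t_{n+1},X^{n+1};\cdot)\Big]\,ds_{1}ds_{2}ds_{3}.
\]
Each inner bracket is a genuine one-step increment of the smooth, polynomially growing function $G_{i,\alpha}$, so the weak-consistency estimate \eqref{c5Ex} applied to $G_{i,\alpha}$, together with its Lipschitz-in-time dependence, gives $\big|\mathbb{E}_{t_{n}}^{X^{n}}[G_{i,\alpha}(t_{n+1},X^{n+1};\cdot)-G_{i,\alpha}(t_{n},X^{n};\cdot)]\big|\le C(1+|X^{n}|^{q})\Delta$; combined with the $\Delta^{3}$ coming from the triple time integral and with the moment bounds \eqref{est_Xs}--\eqref{c5st}, this yields \eqref{312}.

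The main obstacle is precisely the regularity bookkeeping in that last step: after the Malliavin integration by parts each $G_{i,\alpha}$ carries up to three Malliavin derivatives of $F_{x_{i}}^{X^{k}}$ (hence derivatives of $H$ of order up to about four) and, through the It\^o coefficients $g_{\alpha}$, up to two extra derivatives of $b$ and $\sigma$; one must check that the resulting functions $G_{i,\alpha}$ lie in the class required by \eqref{c5Ex}, uniformly in the step, which is exactly what dictates the hypotheses $b,\sigma\in C_{b}^{3,6}$ and $H\in C_{b}^{5}$. The remaining work is routine: organizing the many terms by $\mathcal{A}_{3}$, applying $\big(\sum_{i}a_{i}\big)^{2}\le m\sum_{i}a_{i}^{2}$, and absorbing the factors $(1+|X^{n}|^{q})$ through \eqref{est_Xs}.
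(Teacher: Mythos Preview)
Your proposal is correct and follows essentially the same strategy as the paper: Taylor expansion to isolate the $\mathcal{A}_3$ remainder, Malliavin integration by parts to turn the triple It\^o integral into a triple time integral, then recognise the difference of the integrands at levels $n$ and $n+1$ as a one-step weak increment governed by \eqref{c5Ex}. The paper's execution is more explicit than yours: rather than working with an abstract integrand $G_{i,\alpha}(t_k,X^k;\cdot)$, it first freezes $g_\alpha^i(s_1,X_{s_1}^{t_k,X^k})$ at $g_\alpha^i(t_k,X^k)$ (pushing the residual into $\mathcal{A}_4$, which gives $\Delta^4$ directly), and then Taylor-expands $F_{x_i}^{X^k}$ about $X^k$ to high order so that $\mathbb{E}_{t_k}^{X^k}[D_{s_1s_2s_3}^\alpha F_{x_i}^{X^k}]=\omega_\alpha(t_k,X^k)+O_k(\Delta)$ with $\omega_\alpha$ an explicit function of $b,\sigma,H$ and their derivatives; \eqref{c5Ex} is then applied to the concrete product $\omega_\alpha\cdot g_\alpha^i$. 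One small point of language: your phrase ``time-homogeneity of the flow'' is not quite right here since $b,\sigma$ are time-dependent, but since you keep $t_k$ as an explicit argument of $G_{i,\alpha}$ the substance of the shift argument is unaffected.
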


\begin{proof}
By the multiple Taylor expansion and the definition of $U_{n}^{n+1}$, we know
\begin{equation}\label{312a}
\begin{aligned}
\big|U_{n}^{n+1}-\mathbb{E}_{t_n}^{X^n}\big[U_{n+1}^{n+2}\big]\big|&=
\big|\mathbb{E}_{t_n}^{X^n}\big[H(X_{t_{n+1}}^{t_n,X^n})-H(X^{n+1})
-\mathbb{E}_{t_{n+1}}^{X^{n+1}}[H(X_{t_{n+2}}^{t_{n+1},X^{n+1}})-H(X^{n+2})]\big]\big|\\
&=\sum\limits_{i=1}^d \mathbb{E}_{t_{n}}^{X^{n}}[h_{n}^i F_{x_{i}}^{X^n}]
-\sum\limits_{i=1}^d \mathbb{E}_{t_{n+1}}^{X^{n+1}}[h_{n+1}^i F_{x_{i}}^{X^n+1}],
\end{aligned}
\end{equation}
where $h_{n}^i=\sum\limits_{\alpha\in \mathcal{A}_3}I_\alpha[g_{\alpha}^i(\cdot,X_\cdot^{t_{n},X^{n}})]_{t_{n},t_{n+1}}$
and $F_{x_{i}}^{X^n}=\int_0^1 H_{x_i}'\big(X^{n+1}+\lambda(X_{t_{n+1}}^{t_n,X^n}-X^{n+1})\big)d\lambda.$
Thus, by the integration-by-parts formula \eqref{Malp} of Malliavin calculus, we have
\begin{equation*}\label{3.57a}
\begin{array}{rll}
\sum\limits_{i=1}^d \mathbb{E}_{t_n}^{X^n}[F_{x_i}^{X^n} h^{n}_i]
=&\!\!\!\!
\sum\limits_{i=1}^d\mathbb{E}_{t_n}^{X^n}\Big[\sum\limits_{\alpha\in \mathcal{A}_3} F_{x_i}^{X^{n}}I_{\alpha,n}\, g_{\alpha}^i(t_n,X^n)+\sum\limits_{\alpha\in \mathcal{A}_4} F_{x_i}^{X^{n}}I_\alpha[g_{\alpha}^i(\cdot,X_\cdot^{t_n,X^n})]_{t_n,t_{n+1}}\Big] \\
=&\!\!\!\!
\sum\limits_{i=1}^d\sum\limits_{\alpha\in \mathcal{A}_3}I_{(0,0,0)}\Big[\mathbb{E}_{t_n}^{X^n}\big[D_{s_1s_2s_3}^\alpha
  \{F_{x_i}^{X^{n}}\}\big]\Big]_{t_n,t_{n+1}}g_{\alpha}^i(t_n,X^n)
\\&\!\!\!\!
+\sum\limits_{i=1}^d\sum\limits_{\alpha\in \mathcal{A}_4}I_{(0,0,0,0)}\Big[\mathbb{E}_{t_n}^{X^n}\big[D_{s_1s_2s_3s_4}^\alpha \{F_{x_i}^{X^{n}}\}g_{\alpha}^i(s_1,X_{s_1}^{t_n,X^n})\big]\Big]_{t_n,t_{n+1}}.
\end{array}
\end{equation*}
For $\lambda\in (0,1]$, we assume $\psi_n=\big(\psi_{n,1},\psi_{n,2},\ldots,\psi_{n,d}\big)
:=X^{n+1}-X^n+\lambda\big(X_{t_{n+1}}^{t_n,X^n}-X^{n+1}\big)$ with its $i$-th component
 $\psi_{n,i}=\phi_{n}^i+\lambda h_{n}^i$ ($1\leq i\leq d$), then
by the Taylor expansion, we deduce
\begin{equation*}
\begin{array}{rl}
F_{x_i}^{X^n}=&\!\!\!\!\int_0^1H_{x_i}'\big(X^{n+1}+\lambda(X_{t_{n+1}}^{t_n,X^n}-X^{n+1})\big)d\lambda= \int_0^1H_{x_i}'\big(X^{n}+\psi_n\big)d\lambda\\
 = & \!\!\!\! H_{x_i}'(X_n)+\int_0^1\sum\limits_{j=1}^4\frac{1}{j!}\Big(\psi_{n,1}\frac{\partial}{\partial x_1}+\psi_{n,2}\frac{\partial }{\partial x_2}+\cdots+\psi_{n,d}\frac{\partial }{\partial x_d}\Big)^j H_{x_i}'(X^n)d\lambda+R_5,
\end{array}
\end{equation*}
where
\begin{equation*}\label{51:44}
\ds \Big(\psi_{n,1}\frac{\partial}{\partial x_1}
+\psi_{n,2}\frac{\partial }{\partial x_2}
+\cdots+\psi_{n,d}\frac{\partial}{\partial x_d}\Big)^j
=\sum\limits_{r_1+r_2+\cdots+r_d=j}\frac{j!}{r_1!r_2!\cdots r_d!}\,
\psi_{n,1}^{r_1}\psi_{n,2}^{r_2}\cdots \psi_{n,d}^{r_d}
\frac{\partial^j}{\partial x_1^{r_1}\partial x_2^{r_2}\cdots \partial x_d^{r_d}},
\end{equation*}
and
\begin{equation*}
R_{5}=\frac{1}{5!}\int_0^1\int_0^1\Big(\psi_{n,1}\frac{\partial}{\partial x_1}
+\psi_{n,2}\frac{\partial }{\partial x_2}+\cdots
+\psi_{n,d}\frac{\partial }{\partial x_d}\Big)^5
H_{x_i}'\big(X^{n}+\mu\psi_n\big) d\mu d\lambda.
\end{equation*}
Further,
taking the Malliavin derivative $D_{s_1s_2s_3}^{\alpha}$ to $F_{x_i}^{X^{n}}$
with $\alpha=(j_1,j_2,j_3)\in \mathcal{A}_3$ implies
\begin{equation*}
\begin{aligned}
&
\mathbb{E}_{t_n}^{X^n}\big[D_{s_1s_2s_3}^\alpha(F_{x_i}^{X^{n}})\big]
= \int_0^1\mathbb{E}_{t_n}^{X^n}\big[D_{s_1s_2s_3}^{\alpha}H_{x_i}'\big(X^n+\psi_{n}\big)\big]d\lambda\\
=\;&
D_{s_1s_2s_3}^\alpha H_{x_i}'(X^n)+
\int_0^1\sum_{j=1}^4\frac{1}{j!}\mathbb{E}_{t_n}^{X^n}\big[D_{s_1s_2s_3}^{\alpha}
\big\{\big(\psi_{n,1}\frac{\partial}{\partial x_1}+\psi_{n,2}\frac{\partial }{\partial x_2}+\cdots+\psi_{n,d}\frac{\partial }{\partial x_d}\big)^j H_{x_i}'(X^n)\big\}\big]d\lambda\\
&
+\mathbb{E}_{t_n}^{X^n}\big[D_{s_1s_2s_3}^{\alpha}(R_5)\big]\\
=&
\int_0^1\sum_{j=1}^4\sum\limits_{r_1+r_2+\cdots+r_d=j}\frac{1}{r_1!r_2!\cdots r_d!}
\mathbb{E}_{t_n}^{X^n}\big[D_{s_1s_2s_3}^{\alpha}\big\{\psi_{n,1}^{r_1}\psi_{n,2}^{r_2}\cdots \psi_{n,d}^{r_d}\big\}\big]\frac{\partial^{j+1} H}{\partial x_i^{} \partial x_1^{r_1}\partial x_2^{r_2}\cdots \partial x_d^{r_d}}(X^n)d\lambda\\
&
+D_{s_1s_2s_3}^\alpha H_{x_i}'(X^n)+\mathbb{E}_{t_n}^{X^n}\big[D_{s_1s_2s_3}^{\alpha}(R_5)\big].
\end{aligned}
\end{equation*}
If 
$b,\,\sigma\in C_b^{3,6}$, $H\in C_b^5$, using \eqref{phin} we have
\begin{equation*}\label{3.60}
\begin{aligned}
\mathbb{E}_{t_n}^{X^n}\big[D_{s_1s_2s_3}^\alpha
F_{x_i}^{X^{n}}\big]
 =\omega_\alpha(t_n,X^n)+O_{n}(\Delta ),
\end{aligned}
\end{equation*}
where $\omega_\alpha(t_n,X^n)$ is a function depending only on the index $\alpha$,
$t_n$, $b_i(t_n,X^n)$, $\sigma_{ij}(t_n,X^n)$, $H_{x_i}'(X^n)$ ($1\leq i,j\leq d$),
and their derivatives;
the notation $O_{n}(\Delta)$ means  that it has the estimate
$|O_{n}(\Delta)|\leq C (1+|X^n|^q) \Delta$ with a prior  known integer $q$
 which does not depend on $n$.
From $I_{(0,0,0), n}
=\int_{t_n}^{t_{n+1}}\int_{t_n}^{s_3}\int_{t_n}^{s_2}ds_1\,ds_2\,ds_3=\frac{1}{6}\Delta^3$,
we obtain
\begin{equation*}
\begin{aligned}
&\sum\limits_{i=1}^d\sum_{\alpha\in \mathcal{A}_3}I_{(0,0,0)}\Big[\mathbb{E}_{t_n}^{X^n}\big[D_{s_1s_2s_3}^\alpha
\{F_{x_i}^{X^{n}}\}\big]\Big]_{t_n,t_{n+1}}g_{\alpha}^i(t_n,X^n)\\
=\;&
\sum\limits_{i=1}^d\sum_{\alpha\in \mathcal{A}_3} \big(\omega_\alpha(t_n,X^n)+O_n(\Delta)\big)
g_{\alpha}^i(t_n,X^n)I_{(0,0,0), n}\\
=\;&
\frac{1}{6} \Delta^3\sum\limits_{i=1}^d\sum_{\alpha\in\mathcal{A}_3}\omega_\alpha(t_n,X^n)
g_{\alpha}^i(t_n,X^n)+O_n(\Delta^4).
\end{aligned}
\end{equation*}
Then under the assumptions of the lemma, by the inequality \eqref{c5Ex} in
Hypothesis \ref{hyp3}, it holds
\begin{equation*}\label{3.61}
\begin{aligned}
& \frac{1}{6}\Delta^3\sum\limits_{i=1}^d\sum_{\alpha\in \mathcal{A}_3}\Big|\mathbb{E}_{t_n}^{X^n}\Big[\omega_\alpha(t_n,X^n)g_{\alpha}^i(t_n,X^n)-\omega_\alpha(t_{n+1}, X^{n+1})g_{\alpha}^i\big(t_{n+1},X^{n+1}\big)\Big]\Big|\\
& \qquad \leq  C(1+|X^n|^q)\Delta^4.
\end{aligned}
\end{equation*}
Under Hypothesis \ref{hyp3}, from the equations \eqref{312a} and the above
inequality  we obtain
\begin{equation}\label{lem41}
\begin{array}{rl}
&\Big|\sum\limits_{i=1}^d\sum\limits_{\alpha\in \mathcal{A}_3}\Big\{I_{(0,0,0)}\Big[\mathbb{E}_{t_n}^{X^n}\big[D_{s_1s_2s_3}^\alpha
\{F_{x_i}^{X^{n}}\}\big]\Big]_{t_n,t_{n+1}}g_\alpha^i(t_n,X^n)\\
& \qquad\qquad\quad -I_{(0,0,0)}\Big[\mathbb{E}_{t_{n}}^{X^{n}}\big[D_{s_1s_2s_3}^\alpha
\{F_{x_i}^{X^{n+1}}\}\big]\Big]_{t_{n+1},t_{n+2}}g_\alpha^i(t_{n+1},X^{n+1})\Big\}\Big|\\
& \qquad  \leq  C(1+|X^n|^q)\Delta^4.
\end{array}
\end{equation}
And under the assumption,  it holds that
\begin{equation}\label{lem42}
\begin{aligned}
&\Big|\sum\limits_{i=1}^d\sum\limits_{\alpha\in \mathcal{A}_4}
I_{(0,0,0,0)}\Big[\mathbb{E}_{t_n}^{X^n}\big[D_{s_1s_2s_3s_4}^\alpha \{F_{x_i}^{X_{n}}\}g_{\alpha}^i(s_1,X_{s_1}^{t_n,X^n})\big]\Big]_{t_n,t_{n+1}}\Big|\\
&\qquad \leq C(1+|X^n|^q)\Delta^4,\\
& \Big|\sum\limits_{i=1}^d\sum\limits_{\alpha\in \mathcal{A}_4}
I_{(0,0,0,0)}\Big[\mathbb{E}_{t_n}^{X^n}\big[D_{s_1s_2s_3s_4}^\alpha\{F_{x_i}^{X^{n+1}}\}
g_{\alpha}^i(s_1,X_{s_1}^{t_{n+1},X^{n+1}})\big]\Big]_{t_{n+1},t_{n+2}}\Big|\\
&\qquad \leq C(1+|X^n|^q)\Delta^4.
\end{aligned}
\end{equation}
Now combining the estimates \eqref{312a}, \eqref{lem41} and \eqref{lem42},
we complete the proof.
\end{proof}

\begin{lem}\label{lem56} (See \cite{ZWP09})
Let $\big(X_r^{t,x},Y_r^{t,x},Z_r^{t,x}\big)_{t\leq r\leq T}$ be the solution of \eqref{DFBSDEs2}, and let
$R_{y1}^n$ and $R_{z1}^n$ be the truncation errors defined in
\eqref{s2.3}--\eqref{s2e13} for the C-N scheme.
 If the terminal function  $\varphi\in C_b^{4+\alpha}$ for some $\alpha\in (0,1)$, $b,\sigma$ are bounded, $b,\sigma\in C_b^{2,4}$, and $f\in
C_b^{2,4,4,4}$, then it holds that
\begin{equation}
\begin{aligned}
&\mathbb{E}[|R_{y1}^{N-1}|^2]\leq C(t_N-t_{N-1})^4 = C\Delta^8, \quad
\mathbb{E}[|R_{z1}^{N-1}|^2]\leq C(t_N-t_{N-1})^4= C\Delta^8,\\&
\mathbb{E}[|R_{y1}^{n}|^2] \leq C\Delta^6, \quad
\mathbb{E}[|R_{z1}^n|^2] \leq C\Delta^6, \quad 0\leq n\leq N-2.
\end{aligned}
\end{equation}
And if $\varphi\in C_b^{5+\alpha}$ for some $\alpha\in (0,1)$, $b,\sigma\in C_b^{2,5}$ and $f\in C_b^{2,5,5,5}$, then it holds that
\begin{equation}
\mathbb{E}[|\nabla_{x^n} R_{y1}^{n}|^2] \leq C\Delta^6, \quad
\mathbb{E}[|\nabla_{x^n} R_{z1}^n|^2] \leq C\Delta^6, \quad 0\leq n\leq N-2.
\end{equation}
Here $C$ is a generic positive constant depending on $K$, the initial condition of $X_t$,
and upper bounds of the derivatives of $b$, $\sigma$, $f$ and $\varphi$.
\end{lem}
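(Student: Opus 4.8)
This estimate is contained in \cite{ZWP09}; here I describe the strategy. The plan is to rewrite each remainder as a quadrature error and to control it through the It\^o--Taylor expansion and the Malliavin calculus, after first upgrading the regularity of the data to regularity of the solution of \eqref{PDEs}.

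First I would use the Feynman--Kac representation \eqref{s2:e2}. Setting $g(s,x):=f\big(s,x,u(s,x),u_x(s,x)\sigma(s,x)\big)$ and $z(s,x):=u_x(s,x)\sigma(s,x)$, one has $f_s^{t_n,X^n}=g(s,X_s^{t_n,X^n})$ and $Z_s^{t_n,X^n}=z(s,X_s^{t_n,X^n})$, and parabolic Schauder estimates applied to \eqref{PDEs} convert the smoothness assumptions on $\varphi,b,\sigma,f$ into uniform bounds for the space--time derivatives of $u$, hence of $g$ and $z$, of the required order. For $0\le n\le N-2$ one observes that $f_{t_n}^{t_n,X^n}=\psi(t_n)$ and $\mathbb{E}_{t_n}^{X^n}[f_{t_{n+1}}^{t_n,X^n}]=\psi(t_{n+1})$, where $\psi(s):=\mathbb{E}_{t_n}^{X^n}[g(s,X_s^{t_n,X^n})]$, so that $R_{y1}^n=\int_{t_n}^{t_{n+1}}\psi(s)\,ds-\tfrac{\Delta}{2}\big(\psi(t_n)+\psi(t_{n+1})\big)$ is exactly the trapezoidal quadrature error for $\psi$ on $[t_n,t_{n+1}]$. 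By It\^o's formula $\psi\in C^2$ with $\psi''(s)=\mathbb{E}_{t_n}^{X^n}\big[(L^0L^0g)(s,X_s^{t_n,X^n})\big]$, which is bounded because $b,\sigma$ are bounded and $g$ has bounded derivatives; the Peano-kernel identity $R_{y1}^n=-\tfrac12\int_{t_n}^{t_{n+1}}(s-t_n)(t_{n+1}-s)\psi''(s)\,ds$ then gives $|R_{y1}^n|\le C\Delta^3$ and hence $\mathbb{E}[|R_{y1}^n|^2]\le C\Delta^6$. On the terminal interval the trapezoidal rule is replaced by the left-rectangle (Euler) rule over a step of length $t_N-t_{N-1}=\Delta^2$, whose error is $O((\Delta^2)^2)$ and needs only $\psi'$ bounded, so $\mathbb{E}[|R_{y1}^{N-1}|^2]\le C\Delta^8$.

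Next, for $R_{z1}^n$ I would dispose of the $\Delta W_n$-weighted terms by the Malliavin integration-by-parts formula \eqref{Malp} and the chain rule \eqref{DY}, writing $\mathbb{E}_{t_n}^{X^n}[g(s,X_s^{t_n,X^n})\Delta W_n^\top]=\int_{t_n}^{t_{n+1}}\mathbb{E}_{t_n}^{X^n}[g_x(s,X_s^{t_n,X^n})D_rX_s^{t_n,X^n}]\,dr$, with $D_rX_s^{t_n,X^n}=\nabla_xX_s^{t_n,X^n}(\nabla_xX_r^{t_n,X^n})^{-1}\sigma(r,X_r^{t_n,X^n})I_{r\le s}$, and $\mathbb{E}_{t_n}^{X^n}[Z_s^{t_n,X^n}]=\mathbb{E}_{t_n}^{X^n}[z(s,X_s^{t_n,X^n})]$. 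This exhibits $R_{z1}^n$ as the difference of the trapezoidal errors of $A(s):=\mathbb{E}_{t_n}^{X^n}[g(s,X_s^{t_n,X^n})\Delta W_n^\top]$ and $B(s):=\mathbb{E}_{t_n}^{X^n}[z(s,X_s^{t_n,X^n})]$, where $A(t_n)=0$ since $g(t_n,X^n)$ is $\mathcal{F}_{t_n}$-measurable and $\mathbb{E}_{t_n}^{X^n}[\Delta W_n]=0$; both $A$ and $B$ are $C^2$ with uniformly bounded second derivatives, by the same It\^o argument applied to these representations together with the variational SDE \eqref{s2:e1} for $\nabla_xX^{t_n,\cdot}$. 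Hence $|R_{z1}^n|\le C\Delta^3$, and the Euler-rule degradation on the last interval gives $\mathbb{E}[|R_{z1}^{N-1}|^2]\le C\Delta^8$. The gradient bounds follow the same scheme after applying $\nabla_{x^n}$, which inserts the $L^p$-bounded variation process $\nabla_{x^n}X_s^{t_n,X^n}$ into every term and consumes one further spatial derivative of $g$ and $z$ -- exactly why the stronger hypotheses $\varphi\in C_b^{5+\alpha}$, $b,\sigma\in C_b^{2,5}$, $f\in C_b^{2,5,5,5}$ are imposed; since $b,\sigma$ are bounded no growth factors in $X^n$ survive, and taking $\mathbb{E}[\cdot]$ closes the estimates.

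The hard part will be the regularity bookkeeping: obtaining the sharp $C_b^{l,k}$ bounds on $u$ from \eqref{PDEs} via parabolic Schauder theory, and then verifying that, after the Malliavin integration by parts, each quadrature-error term consumes no more derivatives of $u$ than the hypotheses actually supply -- this is the delicate point that pins down the precise smoothness exponents in the statement.
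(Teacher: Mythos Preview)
Your proposal is correct and follows essentially the same approach the paper indicates: the paper omits the proof, refers to \cite{ZWP09}, and simply states that it ``can be proved by using the Taylor and It\^o-Taylor expansion''; your sketch---Feynman--Kac to reduce to deterministic integrands, recognition of $R_{y1}^n$ and $R_{z1}^n$ as trapezoidal quadrature errors, It\^o's formula to bound second derivatives of $\psi$, $A$, $B$, Malliavin integration by parts for the $\Delta W_n$-weighted terms, and a derivative count for the $\nabla_{x^n}$ versions---is exactly a concrete realization of that strategy. The Peano-kernel identity you invoke is a clean way to package the $O(\Delta^3)$ trapezoidal bound that the It\^o--Taylor expansion would otherwise produce term by term.
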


The above lemma can be proved by using the Taylor and It\^o-Taylor expansion.
Here we omit the proof. Please see the details in \cite{ZWP09}.
Now combining Lemmas \ref{lem51}--\ref{lem55}, we  state our truncation error
estimates in the following lemma.
\begin{lem}\label{lem57}
For $X^{n+1}=\sum\limits_{\alpha\in \Gamma_2}g_\alpha(t_n,X^n)I_{\alpha, n}$,
if $b,\sigma\in C_b^{2,5}$, $f(t,X,Y,Z)\in C_b^{3,5,5,5}$ and
$\varphi\in C_b^{6+\alpha}$ for some $\alpha\in (0,1)$, then
under Hypotheses \ref{hyp0} and \ref{hyp3},
there exists a generic positive integer $q$ such that
\begin{equation}\label{ey1}\begin{aligned}&
\max_{0 \leq n\leq N-2}|\nabla_{x^n} R_{y2}^{n}| \leq C(1+|X^n|^q)\Delta^3, \\&
\max_{0 \leq n\leq N-2}|\nabla_{x^n} R_{z2}^{n}| \leq C(1+|X^n|^q)\Delta^3,
\end{aligned}\end{equation}
and
\begin{equation}\label{ey2}
\begin{array}{ll}
&\max\limits_{0 \leq n\leq N-2}|\mathbb{E}_{t_n}^{X^n}[R_{y1}^{n+1}\Delta W_{n}^\top]|
\leq C(1+|X^n|^q)\Delta^4, \\
&\max\limits_{0 \leq n\leq N-2}|\mathbb{E}_{t_n}^{X^n}[\Delta W_{n} \nabla_{x^n} R_{y1}^{n+1}]|
\leq C(1+|X^n|^q)\Delta^4, \\&
\max\limits_{0 \leq n\leq N-2}|\mathbb{E}_{t_n}^{X^n}[R_{y2}^{n+1}\Delta W_{n}^\top]| \leq C(1+|X^n|^q)\Delta^4, \\
&\max\limits_{0 \leq n\leq N-2}|\mathbb{E}_{t_n}^{X^n}[\Delta W_{n} \nabla_{x^n} R_{y2}^{n+1}]|
\leq C(1+|X^n|^q)\Delta^4 .
\end{array}
\end{equation}
And if 
$b,\sigma\in C_b^{2,5}$, $f(t,X,Y,Z)\in C_b^{3,6,6,6}$ and
$\varphi\in C_b^{7+\alpha}$ for some $\alpha\in (0,1)$, then
\begin{equation}\label{ez}
\begin{array}{rl}
&\max\limits_{0 \leq n\leq N-2}|R_{z1}^n-\mathbb{E}^{X^n}_{t_n}[R_{z1}^{n+1}]|
\leq C(1+|X^n|^q) \Delta^4,\quad\\
&\max\limits_{0 \leq n\leq N-2}|R_{z2}^n-\mathbb{E}^{X^n}_{t_n}[R_{z2}^{n+1}]|
\leq C(1+|X^n|^q) \Delta^4.
\end{array}
\end{equation}
Here $C$ is a generic positive constant depending on $K$, and upper bounds of
the derivatives of $b$, $\sigma$, $f$ and $\varphi$.
\end{lem}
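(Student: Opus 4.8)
The plan is to prove Lemma~\ref{lem57} by decomposing each truncation error into the pieces already analyzed in Lemmas~\ref{lem51}--\ref{lem56} and applying those estimates termwise. First, recall from \eqref{ry1} and \eqref{crz} that $R_{y2}^n$ and $R_{z2}^n$ are built from differences $Y_{t_{n+1}}^{t_n,X^n}-Y_{t_{n+1}}^{t_{n+1},X^{n+1}}$, $Z_{t_{n+1}}^{t_n,X^n}-Z_{t_{n+1}}^{t_{n+1},X^{n+1}}$, and $f_{t_{n+1}}^{t_n,X^n}-f_{t_{n+1}}^{t_{n+1},X^{n+1}}$. Using the representation \eqref{s2:e2}, these are all of the form $H(X_{t_{n+1}}^{t_n,X^n})-H(X^{n+1})$ (possibly composed with $\nabla_x u$, $\sigma$, etc.), where $H$ inherits the required smoothness from $\varphi$, $b$, $\sigma$, $f$ via the regularity theory for the PDE \eqref{PDEs}. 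Thus $|\nabla_{x^n}R_{y2}^n|$ and $|\nabla_{x^n}R_{z2}^n|$ are controlled directly by Lemma~\ref{lem51} (applied to the appropriate $H$), which gives the $\Delta^3$ bound in \eqref{ey1}.

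Second, for the terms in \eqref{ey2}: the quantities $\mathbb{E}_{t_n}^{X^n}[R_{y1}^{n+1}\Delta W_n^\top]$ and $\mathbb{E}_{t_n}^{X^n}[\Delta W_n\nabla_{x^n}R_{y1}^{n+1}]$ fit the structure of Lemma~\ref{lem52}: indeed $R_{y1}^{n+1}$ is, up to the treatment of the $f$-arguments, of the form $R_{n+1}^{n+2}$ with $H$ replaced by $f$ composed with $u$ and $\nabla_x u$, so \eqref{lem1eq1}--\eqref{lem1eq2} yield the $\Delta^4$ bounds. Similarly, $R_{y2}^{n+1}$ matches the $U_{n+1}^{n+2}$ structure of Lemma~\ref{lem54}, so $\mathbb{E}_{t_n}^{X^n}[R_{y2}^{n+1}\Delta W_n^\top]$ and $\mathbb{E}_{t_n}^{X^n}[\Delta W_n\nabla_{x^n}R_{y2}^{n+1}]$ are bounded by \eqref{lemleq4}--\eqref{lemqel}. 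The key cancellation mechanism here is that multiplying a genuine $O(\Delta^3)$ SDE-discretization error by the increment $\Delta W_n$ (which is $\mathcal{F}_{t_{n+1}}$-measurable, hence can be pulled inside the inner conditional expectation and converted via the Malliavin integration-by-parts \eqref{Malp} into an extra $ds$-integral) gains one more power of $\Delta$.

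Third, for the differenced $z$-terms in \eqref{ez}: $R_{z1}^n-\mathbb{E}_{t_n}^{X^n}[R_{z1}^{n+1}]$ and $R_{z2}^n-\mathbb{E}_{t_n}^{X^n}[R_{z2}^{n+1}]$ are exactly the "consecutive-step difference" structures handled by Lemma~\ref{lem53} (for the trapezoidal truncation part, matching $R_n^{n+1}-R_{n+1}^{n+2}$) and Lemma~\ref{lem55} (for the SDE-discretization part, matching $U_n^{n+1}-\mathbb{E}_{t_n}^{X^n}[U_{n+1}^{n+2}]$). Both lemmas supply $\Delta^4$ bounds; the point is that each individual remainder is only $O(\Delta^3)$, but the leading $O(\Delta^3)$ terms are of the form (coefficient at step $n$) $-$ (coefficient at step $n{+}1$), which is itself $O(\Delta)$ by \eqref{c5Ex} in Assumption~\ref{hyp3}, producing the gain. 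Finally, one assembles all the pieces, checks that the smoothness hypotheses stated in Lemma~\ref{lem57} ($b,\sigma\in C_b^{2,5}$ or better, $f\in C_b^{3,5,5,5}$ or $C_b^{3,6,6,6}$, $\varphi\in C_b^{6+\alpha}$ or $C_b^{7+\alpha}$) are exactly what is needed so that every invoked lemma's hypotheses hold for the associated $H$, and takes maxima over $0\le n\le N-2$ using \eqref{est_Xs}. The main obstacle I anticipate is bookkeeping: correctly identifying, for each of the eight estimates, which composite function $H$ and which of Lemmas~\ref{lem51}--\ref{lem55} applies, and verifying that the PDE regularity transfers the stated derivative bounds on $\varphi,b,\sigma,f$ to the required $C_b^{k}$ or $C_b^{l,k}$ bounds on $u$, $\nabla_x u$, and hence on $H$ — this chain of dependencies is where the precise smoothness exponents are pinned down.
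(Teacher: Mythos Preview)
Your proposal is correct and follows essentially the same approach as the paper: the paper's proof uses the Feynman--Kac representation \eqref{335} to write $R_{y2}^n$ in the form $H(X_{t_{n+1}}^{t_n,X^n})-H(X^{n+1})$, applies Lemma~\ref{lem51} explicitly for the first estimate in \eqref{ey1}, and then simply states that the remaining estimates in \eqref{ey1}--\eqref{ez} follow ``similarly'' from Lemmas~\ref{lem52}--\ref{lem55}. Your write-up is in fact more detailed than the paper's, since you spell out which lemma governs which term (Lemma~\ref{lem52} for the $R_{y1}^{n+1}\Delta W_n^\top$ terms, Lemma~\ref{lem54} for the $R_{y2}^{n+1}\Delta W_n^\top$ terms, Lemmas~\ref{lem53} and~\ref{lem55} for the consecutive-step differences) and articulate the cancellation mechanisms explicitly.
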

\begin{proof}
Under the conditions of the lemma,  by the Feynman-Kac formula \eqref{s2:e2}, the solution $(Y_t^{t_n,X^n}, Z_t^{t_n,X^n})$ of FBSDEs \eqref{DFBSDEs2} can be represented as
\begin{equation}\label{335}
Y_t^{t_n,X^n}=u(t,X_t^{t_n,X^n}),\quad
Z_t^{t_n,X^n}= u_x(t,X_t^{t_n,X^n})\sigma(t,X_t^{t_n,X^n}),\,
\forall\; t\in [0,T),
\end{equation}
where $u(t,x)$ satisfies the parabolic PDE \eqref{PDEs}.
According to (\ref{335}), we set
\begin{equation*}
\begin{array}{rl}
H(X_{t_{n+1}}^{t_n,X^n})
=&\!\!\!\!
\mathbb{E}_{t_n}^{X^n}[Y_{t_{n+1}}^{t_n,X^n}]+\frac{1}{2}\Delta \mathbb{E}_{t_n}^{X^n}[f_{t_{n+1}}^{t_n,X^n}]\\
=&\!\!\!\!
\mathbb{E}_{t_n}^{X^n}[u(t_{n+1},X_{t_{n+1}}^{t_n,X^n})]
\\&\!\!\!\!
+\frac{1}{2}\Delta \mathbb{E}_{t_n}^{X^n}\big[f\big(t_{n+1},X_{t_{n+1}}^{t_n,X^n}, u(t_{n+1},X_{t_{n+1}}^{t_n,X^n}),u_x(t_{n+1},
X_{t_{n+1}}^{t_n,X^n})\sigma(t_{n+1},X_{t_{n+1}}^{t_n,X^n})\big)\big]
\end{array}
\end{equation*}
and
\begin{equation*}
\begin{array}{rl}
H(X^{n+1})
=&\!\!\!\!
\mathbb{E}_{t_n}^{X^n}[Y_{t_{n+1}}^{t_{n+1},X^{n+1}}]+\frac{1}{2}\Delta \mathbb{E}_{t_n}^{X^n}[f_{t_{n+1}}^{t_{n+1},X^{n+1}}]\\
=&\!\!\!\!
\mathbb{E}_{t_n}^{X^n}[u(t_{n+1},X^{n+1})]
\\&\!\!\!\!
+\frac{1}{2}\Delta \mathbb{E}_{t_n}^{X^n}\big[f\big(t_{n+1},X^{n+1}, u(t_{n+1},X^{n+1}),u_x(t_{n+1},X^{n+1})\sigma(t_{n+1},X^{n+1})\big)\big].
\end{array}
\end{equation*}
Then, $$R_{y2}^n = H(X_{t_{n+1}}^{t_n,X^n})-H(X^{n+1}).$$
By the theory of partial differential equations \cite{Evan98},
under the conditions of the lemma,
it is easy to check that the function $H$ satisfies the conditions in
Lemma \ref{lem51}, thus we have the estimate $\nabla_{x^n}R_{y2}^n$ in \eqref{ey1}.
Similarly under the conditions of this lemma, we have the estimates in \eqref{ey2}
and (\ref{ez}) by using Lemma \ref{lem52}--\ref{lem55}. The proof is completed.
\end{proof}

\begin{lem}\label{lem-ini21}
Assume $Y^N=\varphi(X^N)$. Under Hypothesis \ref{hyp3}
and the conditions of Lemma \ref{lem56}, it holds that
\begin{equation}\label{inires}
\mathbb{E}[|e_{Y}^{N-1}|^2]\leq C\Delta^4,\quad
\mathbb{E}[|e_{Z}^{N-1}|^2]\leq C\Delta^{4},\quad
\mathbb{E}[|e_{\nabla Y}^{N-1}|^2]\leq C\Delta^4,\quad
\mathbb{E}[|e_{\nabla Z}^{N-1}|^2]\leq C\Delta^4.
\end{equation}
where $C$ is a generic constant depending on $K,L$, the initial condition of $X_t$,
and upper bounds of derivatives of $b,\sigma, f$ and $\varphi$.
\end{lem}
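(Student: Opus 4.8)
The plan is to exploit the fact that the final step has length $t_N-t_{N-1}=\Delta^2$, uses only the plain Euler approximation, and is equipped with the terminal condition $Y^N=\varphi(X^N)$, so that the level-$(N-1)$ errors are driven purely by the truncation errors and the one-step Euler discretization error, each of which I will bound at order $\Delta^8$ in mean square. \textbf{Step 1 (error identities).} Subtracting the $n=N-1$ equations of Scheme~\ref{sch1} from the reference equations \eqref{s2.3} and \eqref{ZN-1}, and using $Y^N=\varphi(X^N)$ so that the terms $\mathbb{E}_{t_{N-1}}^{X^{N-1}}[\varphi(X^N)]$ and $\mathbb{E}_{t_{N-1}}^{X^{N-1}}[\varphi(X^N)\Delta W_{N-1}^\top]$ cancel exactly, gives
$$e_Y^{N-1}=\Delta^2 e_f^{N-1}+\sum_{j=1}^2 R_{yj}^{N-1},\qquad e_Z^{N-1}=\frac{1}{\Delta^2}\sum_{j=1}^2 R_{zj}^{N-1}.$$
Applying $\nabla_{x^{N-1}}$ to these identities (equivalently, subtracting the variational scheme equations from the variational reference equations, the common term $\nabla_{x^{N-1}}\mathbb{E}_{t_{N-1}}^{X^{N-1}}[\varphi(X^N)]$ again cancelling) yields
$$e_{\nabla Y}^{N-1}=\Delta^2 e_{\nabla f}^{N-1}+\sum_{j=1}^2 \nabla_{x^{N-1}}R_{yj}^{N-1},\qquad e_{\nabla Z}^{N-1}=\frac{1}{\Delta^2}\sum_{j=1}^2 \nabla_{x^{N-1}}R_{zj}^{N-1}.$$
By the Lipschitz continuity of $f$ and of its first-order derivatives, together with the boundedness of the variational processes $\nabla_x Y_{t_{N-1}}^{t_{N-1},X^{N-1}}$ and $\nabla_x Z_{t_{N-1}}^{t_{N-1},X^{N-1}}$, one has $|e_f^{N-1}|\le C(|e_Y^{N-1}|+|e_Z^{N-1}|)$ and $|e_{\nabla f}^{N-1}|\le C(|e_Y^{N-1}|+|e_Z^{N-1}|+|e_{\nabla Y}^{N-1}|+|e_{\nabla Z}^{N-1}|)$, so the $\Delta^2 e_f^{N-1}$ and $\Delta^2 e_{\nabla f}^{N-1}$ contributions will be absorbed into the left-hand sides once $\Delta$ is small.

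\textbf{Step 2 (remainder estimates).} The terms $R_{y1}^{N-1}$ and $R_{z1}^{N-1}$ are the errors of the left-rectangle quadrature over an interval of length $\Delta^2$; Lemma~\ref{lem56} gives $\mathbb{E}[|R_{y1}^{N-1}|^2]\le C\Delta^8$ and $\mathbb{E}[|R_{z1}^{N-1}|^2]\le C\Delta^8$, and I would obtain $\mathbb{E}[|\nabla_{x^{N-1}}R_{y1}^{N-1}|^2]\le C\Delta^8$, $\mathbb{E}[|\nabla_{x^{N-1}}R_{z1}^{N-1}|^2]\le C\Delta^8$ by repeating the It\^o--Taylor/Taylor argument of that lemma on the variational equations. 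For $R_{y2}^{N-1}=\mathbb{E}_{t_{N-1}}^{X^{N-1}}[\varphi(X_{t_N}^{t_{N-1},X^{N-1}})-\varphi(X^N)]$ and $R_{z2}^{N-1}=\mathbb{E}_{t_{N-1}}^{X^{N-1}}[(\varphi(X_{t_N}^{t_{N-1},X^{N-1}})-\varphi(X^N))\Delta W_{N-1}^\top]$, which are the one-step weak errors of the Euler step $X^N=X^{N-1}+b(t_{N-1},X^{N-1})\Delta^2+\sigma(t_{N-1},X^{N-1})\Delta W_{N-1}$, I would argue as in the proofs of Lemmas~\ref{lem51}--\ref{lem54}: expand $\varphi(X_{t_N}^{t_{N-1},X^{N-1}})-\varphi(X^N)$ by Taylor and $\varphi(X_{t_N}^{t_{N-1},X^{N-1}})$ by the It\^o formula; since $\varphi$ is time-independent, the leading $O(\Delta^2)$ contributions of the exact flow and of the Euler step coincide, and the time dependence of $b,\sigma$ enters only at order $\Delta^2\cdot\Delta^2$, so $|R_{y2}^{N-1}|\le C(1+|X^{N-1}|^q)\Delta^4$ for some integer $q$; a further Malliavin integration-by-parts (Lemma~\ref{nualart}) on the $\Delta W_{N-1}$-weighted integrand gives $|R_{z2}^{N-1}|\le C(1+|X^{N-1}|^q)\Delta^4$, and the same estimates hold for $\nabla_{x^{N-1}}R_{y2}^{N-1}$ and $\nabla_{x^{N-1}}R_{z2}^{N-1}$. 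Invoking the stability bound \eqref{c5st} to control $\mathbb{E}[(1+|X^{N-1}|^q)^2]$ then yields $\mathbb{E}[|R_{y2}^{N-1}|^2]$, $\mathbb{E}[|R_{z2}^{N-1}|^2]$, $\mathbb{E}[|\nabla_{x^{N-1}}R_{y2}^{N-1}|^2]$, $\mathbb{E}[|\nabla_{x^{N-1}}R_{z2}^{N-1}|^2]\le C\Delta^8$.

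\textbf{Step 3 (conclusion and main obstacle).} From $e_Z^{N-1}=\frac{1}{\Delta^2}\sum_j R_{zj}^{N-1}$, $e_{\nabla Z}^{N-1}=\frac{1}{\Delta^2}\sum_j\nabla_{x^{N-1}}R_{zj}^{N-1}$ and Step~2 I get $\mathbb{E}[|e_Z^{N-1}|^2]\le C\Delta^4$ and $\mathbb{E}[|e_{\nabla Z}^{N-1}|^2]\le C\Delta^4$. Substituting these into the identities for $e_Y^{N-1}$ and $e_{\nabla Y}^{N-1}$, absorbing the $\Delta^2$-terms for $\Delta$ small, and using the mean-square bounds on $R_{y1}^{N-1},R_{y2}^{N-1}$ and their gradients gives $\mathbb{E}[|e_Y^{N-1}|^2]\le C\Delta^8\le C\Delta^4$ and $\mathbb{E}[|e_{\nabla Y}^{N-1}|^2]\le C\Delta^4$, which is \eqref{inires}. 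I expect the main obstacle to be the weak-error estimates for $R_{y2}^{N-1},R_{z2}^{N-1}$ and their variational and $\Delta W_{N-1}$-weighted forms: squeezing the extra factor $\Delta^2$ out of the cancellation between the first-order Euler step and the exact flow requires carrying the It\^o--Taylor expansion to sufficiently high order and performing a careful multiple Malliavin integration-by-parts, and this is precisely the place where the design choice $t_N-t_{N-1}=\Delta^2$ is used, so that the level-$(N-1)$ error is of the same order $\Delta^4$ as the global scheme.
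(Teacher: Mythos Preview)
Your proposal is correct and follows essentially the same route as the paper: derive the error identities $e_Z^{N-1}=\frac{1}{\Delta^2}\sum_j R_{zj}^{N-1}$ and $e_Y^{N-1}=\Delta^2 e_f^{N-1}+\sum_j R_{yj}^{N-1}$ (and their variational analogues), bound all $R_{yj}^{N-1},R_{zj}^{N-1}$ at order $\Delta^8$ in mean square via Lemma~\ref{lem56} and the one-step weak error of the Euler discretization over the interval of length $\Delta^2$, and absorb the $\Delta^2 e_f^{N-1}$ term. The only difference is presentational: the paper simply invokes Hypothesis~\ref{hyp3} and \cite{ZZJ14} for the $R_{y2}^{N-1},R_{z2}^{N-1}$ bounds, whereas you spell out the It\^o--Taylor/Malliavin argument for the Euler step explicitly; your version is in fact a bit clearer on why the Euler step over $[t_{N-1},t_N]$ with $t_N-t_{N-1}=\Delta^2$ yields $O(\Delta^4)$ local weak error.
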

\begin{proof}
We know that for weak order-2 scheme \eqref{Xnn10}, Hypothesis \ref{hyp3} holds true with
$\beta = \gamma = 2$ (readers can refer to Section 4.3.2 in \cite{ZZJ14} for the detailed proof).
Combining with Lemma \ref{lem56}, we get
\begin{equation}\label{s3:e26c2}
\sum\limits_{j=1}^2 \mathbb{E}\big[|R_{yj}^{N-1}|^2\big]\leq C\Delta^8, \qquad
\sum\limits_{j=1}^2\mathbb{E}\big[|R_{zj}^{N-1}|^2\big]\leq C\Delta^8.
\end{equation}
By the equalities \eqref{s2.3}, \eqref{ZN-1} and \eqref{scheme-step1}, we deduce
$$
\mathbb{E}[|e_Z^{N-1}|^2] =
\frac{1}{\Delta^2}\mathbb{E}[|\sum\limits_{j=1}^2R_{zj}^{N-1}|^2]\leq
\frac{1}{\Delta^2}\sum\limits_{j=1}^2\mathbb{E}[|R_{zj}^{N-1}|^2]
\leq  C\Delta^{4},
$$
and
\begin{equation}\nonumber
\begin{array}{rl}
\mathbb{E}[|e_Y^{N-1}|^2] \leq
C\Delta^2\mathbb{E}[|e_Y^{N-1}|^2+|e_Z^{N-1}|^2]+C\sum\limits_{j=1}^2\mathbb{E}[|R_{yj}^{N-1}|^2]
\leq  C\Delta^2 \mathbb{E}[|e_Y^{N-1}|^2] + C\Delta^4,
\end{array}
\end{equation}
which implies $\mathbb{E}[|e_Y^{N-1}|^2]\leq \frac{C\Delta^4}{1-C\Delta^4}\leq  C\Delta^4$.
Similarly we can prove $\mathbb{E}[|e_{\nabla Y}^{N-1}|^2]\leq C\Delta^4$ and
$\mathbb{E}[|e_{\nabla Z}^{N-1}|^2]\leq C\Delta^{4}$.
The proof is completed.
\end{proof}

\subsubsection{Proof of Theorem \ref{thm2}}
After the above preparations, we now give the proof of Theorem \ref{thm2} as follows.
\begin{proof}
Under the conditions of the theorem, if Hypothesis \ref{hyp3} holds,
we have $$\mathbb{E}[|X^n|^q] \leq C(1+\mathbb{E}[|X_0|^q]),$$
then according to Lemmas \ref{lem56}--\ref{lem-ini21},
we obtain the estimates
\begin{equation*}
\begin{array}{rl}
&\hspace{2cm}
\mathbb{E}[|e_{Y}^{N-1}|^2+|e_{Z}^{N-1}|^2+|e_{\nabla Y}^{N-1}|^2
 +|e_{\nabla Z}^{N-1}|^2]\leq C\Delta^{4}, \\ &
\max\limits_{0 \leq n\leq N-2}\sum\limits_{j=1}^2
\mathbb{E}\Big[|R_{yj}^{n}|^2+|\nabla_{x^n} R_{yj}^{n}|^2+|\nabla_{x^n} R_{zj}^{n}|^2
+|\mathbb{E}^{X^n}_{t_n}[R_{yj}^{n+1}]|^2
+|\mathbb{E}^{X^n}_{t_n}[R_{zj}^{n+1}]|^2\\
&\hspace{2cm}
+|\mathbb{E}^{X^n}_{t_n}[\nabla_{x^n}R_{yj}^{n+1}]|^2
+|\mathbb{E}^{X^n}_{t_n}[\nabla_{x^n}R_{zj}^{n+1}]|^2
+|\mathbb{E}^{X^n}_{t_n}[\nabla_{x^{n+1}}R_{zj}^{n+1}]|^2
\Big]\leq C\Delta^6,\\
&
\max\limits_{0 \leq n\leq N-2}\sum\limits_{j=1}^2
\mathbb{E}\Big[|\mathbb{E}^{X^n}_{t_n}[\Delta W_n\nabla_{x^n}R_{yj}^{n+1}]|^2
+|\mathbb{E}^{X^n}_{t_n}[R_{yj}^{n+1}\Delta W_{n}^\top]|^2
+|R_{zj}^n-\mathbb{E}^{X^n}_{t_n}[R_{zj}^{n+1}]|^2
\Big]\\&
\leq C\Delta^8.
\end{array}
\end{equation*}
Inserting the above estimates into \eqref{eq:thm1} in Theorem \ref{thm1},
we easily deduce \eqref{eq:thm2}.
The proof is completed.
\end{proof}


\section{Conclusions}

In this paper, we considered the theoretical error estimates
of the Crank-Nicolson (C-N) scheme for solving decoupled FBSDEs
proposed in \cite{ZLF14}.
By properly using the Young's inequality to the error equations of the C-N scheme
and their associated variational equations,
we first rigorously obtained a general error estimate result for the C-N scheme.
This result also implies the stability of the scheme.
Then by the Taylor and It\^o-Taylor expansions, the theory of multiple Malliavin calculus,
and the local truncation error cancelation techniques,
we theoretically obtained the truncation error estimates of the scheme.
Finally based on the general error estimate result
and the truncation error estimates,
we theoretically proved
that the accuracy of the C-N scheme for solving decoupled FBSDEs
is of second order.

%

%
%
%
%
%
%
%
%
%
%
%
%
%
%
%
%
%
%

\Acknowledgements{This author's research is partially supported by  the National
Natural Science Foundations of China under grant numbers 11426152, 11501366 and 11571206.}


\end{document}